\numberwithin{equation}{section}
  \theoremstyle{plain}
  \newtheorem{thm}{\protect\theoremname}[section]
  \theoremstyle{remark}
  \newtheorem{rem}{\protect\remarkname}[section]
  \theoremstyle{definition}
  \newtheorem{defn}{\protect\definitionname}[section]
  \theoremstyle{plain}
  \newtheorem{lem}{\protect\lemmaname}[section]
  \theoremstyle{plain}
  \newtheorem{prop}{\protect\propositionname}[section]
  \theoremstyle{plain}
  \newtheorem{cor}{\protect\corollaryname}[section]
\date{}
  \providecommand{\definitionname}{Definition}
  \providecommand{\lemmaname}{Lemma}
  \providecommand{\propositionname}{Proposition}
  \providecommand{\remarkname}{Remark}
\providecommand{\corollaryname}{Corollary}
\providecommand{\theoremname}{Theorem}
\begin{document}

\title{
}
\title{Tail Asymptotics of the Brownian Signature}

\author{H. Boedihardjo\thanks{Department of Mathematics and Statistics , University of Reading,
Reading RG6 6AX, United Kingdom. Email: h.s.boedihardjo@reading.ac.uk.}, X. Geng\thanks{Department of Mathematical Sciences, Carnegie Mellon University, Pittsburgh
PA 15213, United States. Email: xig@andrew.cmu.edu.}}
\maketitle
\begin{abstract}
The signature of a path $\gamma$ is a sequence whose $n$-th term
is the order-$n$ iterated integrals of $\gamma$. It arises from
solving multidimensional linear differential equations driven by $\gamma$.
We are interested in relating the path properties of $\gamma$ with
its signature. If $\gamma$ is $C^{1}$, then an elegant formula of
Hambly and Lyons relates the length of $\gamma$ to the tail asymptotics
of the signature. We show an analogous formula for the multidimensional
Brownian motion, with the quadratic variation playing a similar role
to the length. In the proof, we study the hyperbolic development of
Brownian motion and also obtain a new subadditive estimate for the
asymptotic of signature, which may be of independent interest. As
a corollary, we strengthen the existing uniqueness results for the
signatures of Brownian motion. 
\end{abstract}

\section{\label{sec:intro}Introduction }

\subsection{Path driven differential equations and iterated integrals}

Path-driven differential equations of the form
\begin{equation}
dY_{t}=\sum_{i=1}^{d}A_{i}Y_{t}\mathrm{d}\gamma_{t}^{i},\;Y_{0}=y\label{eq:Sieltjes differential equation}
\end{equation}
where $\gamma=[0,T]\rightarrow\mathbb{R}^{d},\,\gamma=(\gamma^{1},\ldots,\gamma^{d})$
and $A_{i}:\mathbb{R}^{n}\rightarrow\mathbb{R}^{n}$ is linear, has
a Taylor expansion of the form 
\begin{equation}
Y_{T}=\sum_{n=0}^{\infty}\sum_{1\leq i_{1},\ldots,i_{n}\leq d}A_{i_{n}}A_{i_{n-1}}\ldots A_{i_{1}}\int_{0}^{T}\ldots\int_{0}^{t_{2}}\mathrm{d}\gamma_{t_{1}}^{i_{1}}\ldots\mathrm{d}\gamma_{t_{n}}^{i_{n}}.\label{eq:Taylor expansion}
\end{equation}
In particular, $Y_{t}$ is a linear function of the \textit{signature}
of $\gamma$ on $[0,T]$ (also known as the Chen series \cite{Chen54}),
defined as 
\begin{equation}
g\triangleq\left\{ \int_{0<t_{1}<\cdots<t_{n}<T}\mathrm{d}\gamma_{t_{1}}\otimes\cdots\otimes\mathrm{d}\gamma_{t_{n}}:\ n\in\mathbb{N}\right\} .\label{eq: iterated path integrals}
\end{equation}
If $\gamma$ is a stochastic process, then the map from $g$ to $Y_{T}$
is a deterministic map (independent of the sample path). Some useful
properties about $Y_{T}$ can be deduced from $S(\gamma)_{0,T}$ through
the Taylor expansion (\ref{eq:Taylor expansion}). As the simplest
example, if $g$ is well-defined almost surely, then so would $Y_{T}$,
with the exceptional set being independent of $A$. 

Motivated by the use of signature in solving differential equations,
the signature of $\gamma$ has been used to store information about
the path $\gamma$ for the purpose of e.g. handwriting recognition
(\cite{Chinese handwriting15}). Many of these signature-based methods
would benefit from a better understanding of how the signature is
related to the geometric properties of $\gamma$. For instance, certain
functionals of signature may contain more useful information about
handwriting recognition than others, in which case we may save computational
time by focusing on these features. The ``reconstruction problem''
of a path from its signature has attracted interests recently in \cite{CDNX17},
\cite{Geng17}, \cite{LX15}, \cite{LX17}, \cite{Ursitti16}. 

In the rough path literature, the first main result in this direction
was due to Hambly and Lyons \cite{HL10} that every continuous path
with bounded variation is uniquely determined by its signature up
to a \textit{tree-like equivalence}. Loosely speaking, two paths are
tree-like equivalence if one can be obtained by adding tree-like pieces
to the other, see figure below. 

\includegraphics[scale=0.6]{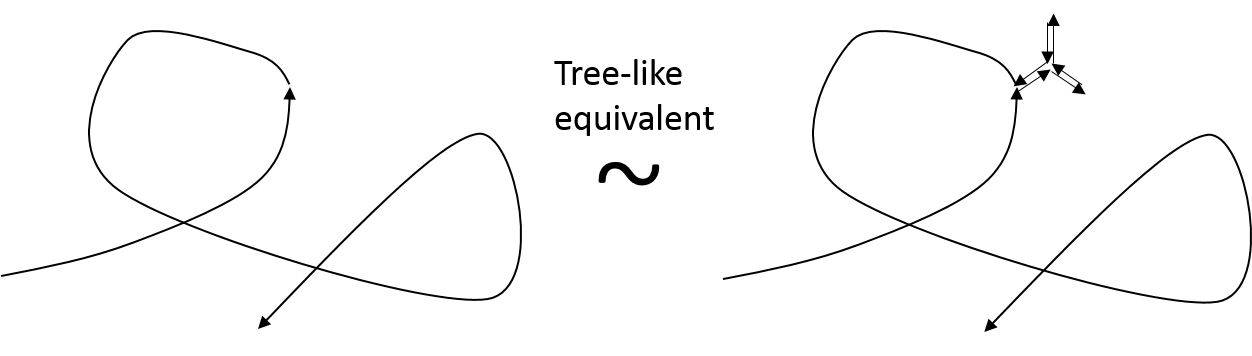}

As we do not need the precise definition of tree-like in this paper,
we refer the interested readers to \cite{HL10}. Hambly-Lyons' uniqueness
result was extended to the general rough path case in \cite{BGLY16}. 

Along with the uniqueness results mentioned above, it was also shown
(\cite{BGLY16}, \cite{HL10}) that every tree-like equivalence class
contains a unique representative path $\gamma$ which does not contain
any tree-like pieces. This representative path is called the tree-reduced
path. On the other hand, signatures have a certain algebraic structure
(see Theorem 2.15 in \cite{CTT07}) which ensures that every term
of a signature element $g$ can be recovered from looking at the tail
of $g$. Therefore, it is natural and reasonable to expect that some
intrinsic geometric properties associated with a tree-reduced rough
path can be explicitly recovered from the tail behavior of its signature. 

In the bounded variation case, it was proved that the length of a
path $\gamma$ can be recovered from the tail asymptotics of its signature
$g$ in the following way:
\begin{equation}
\|\gamma\|_{\mathrm{1-var}}=\lim_{n\rightarrow\infty}\left(n!\|g_{n}\|_{\mathrm{proj}}\right)^{\frac{1}{n}}\label{eq: the length conjecture}
\end{equation}
provided that $\gamma\in C^{1}$ when parametrized by unit speed and
the modulus of continuity $\delta_{\gamma'}$ for $\gamma'$ satisfies
$\delta_{\gamma'}(\varepsilon)=o(\varepsilon^{3/4})$ as $\varepsilon\downarrow0.$
Here $g_{n}$ is the $n$-th term of the signature $g$ and the tensor
norm is the projective norm induced by the Euclidean norm on $\mathbb{R}^{d}$
(see Definition \ref{def:projective norm}). The notation $\Vert\gamma\Vert_{1-var}$
denote the $1$-variation of $\gamma$ which is the same as the length
of $\gamma$. This formula (\ref{eq: the length conjecture}) now
also holds for general $C^{1}$ paths \cite{LX15}, piecewise linear
paths and monotonely increasing paths. Note that in dimension $1$,
the assumption that $\gamma\in C^{1}$ with respect to the unit speed
parametrization implies that $\gamma$ is monotonic in all coordinates.
Therefore (\ref{eq: the length conjecture}) is only interesting when
the dimension is greater than $1$. It has been conjectured that the
same result should hold for all tree-reduced continuous path with
bounded variation. However, very little progress has been made towards
a complete solution.

For an arbitrary continuous path with bounded variation $\gamma$,
one can easily see that 
\begin{equation}
\|g_{n}\|_{\mathrm{proj}}\leqslant\frac{\|\gamma\|_{1-\mathrm{var}}^{n}}{n!},\ \ \ \forall n\in\mathbb{N}.\label{eq:decay of BV paths}
\end{equation}
So the length conjecture (\ref{eq: the length conjecture}) is about
establishing a matching lower bound. If proved to be true in general,
it will indicate that for a tree-reduced path, the signature components
decay in an exact factorial rate. The original idea of Hambly and
Lyons for proving (\ref{eq: the length conjecture}) in the $C^{1}$-case
is looking at the lifting $X^{\lambda}$ of $\lambda\cdot\gamma$
(rescaling $\gamma$ by a large constant $\lambda$) to the hyperbolic
manifold of constant curvature $-1$ (the hyperbolic development).
It turns out that when $\lambda\rightarrow\infty,$ $X^{\lambda}$
becomes more and more like a hyperbolic geodesic in the sense that
the hyperbolic distance between the two endpoints of $X^{\lambda}$
is asymptotically comparable to its hyperbolic length. As a simple
consequence of the nature of hyperbolic development, the said hyperbolic
distance is related to the signature of $\gamma$ is a fairly explicit
way, while the hyperbolic length is the same as the original length.
In this way, one sees a lower bound for the signature in terms of
the length. It seems to us that in the deterministic setting, the
technique of hyperbolic development is essentially a $C^{1}$-technique
which requires major modification in the general bounded variation
case in quite a fundamental way.

In parallel, we could certainly ask a similar question in the rough
path context. According to Lyons \cite{Lyons98}, for a rough path
$\mathbf{X}$ with finite $p$-variation ($p\geqslant1$, see Definition
\ref{def:rough path}) the signature estimate takes the form
\begin{equation}
\|g_{n}\|_{\mathrm{proj}}\leqslant\frac{\omega(\mathbf{X})^{\frac{n}{p}}}{\left(\frac{n}{p}\right)!},\ \ \ \forall n\in\mathbb{N},\label{eq:rough factorial decay}
\end{equation}
where $\omega(\mathbf{X})$ is a constant depending on the $p$-variation
of $\mathbf{X}$ and $(\frac{n}{p})!=\Gamma(\frac{n}{p}+1)$ with
$\Gamma$ being the gamma function. To expect an analogue of (\ref{eq: the length conjecture})
for rough paths (what this actually means is not even clear at at
this point), it is natural to search lower bounds for $g_{n}$ of
the same form and look at the quantity 
\[
\tilde{L}_{p}\triangleq\limsup_{n\rightarrow\infty}\left(\left(\frac{n}{p}\right)!\|g_{n}\|_{\mathrm{proj}}\right)^{\frac{p}{n}}.
\]
On the one hand, the reason of looking at the ``limsup'' instead
of an actual limit is that, unlike the bounded variation case, the
limit does not generally exist for rough paths. For instance, one
could easily find examples of tree-reduced geometric rough paths with
infinitely many zero signature terms (for instance $\mathbf{X}_{t}\triangleq\exp(t[v,w])$
for certain vectors $v,w\in\mathbb{R}^{d}$). One might expect that
$\widetilde{L}_{p}$ is equal to the $p$-variation of the underlying
rough path. However, this cannot be the case since $\widetilde{L}_{p}=0$
for a bounded variation path when $p>1$ due to (\ref{eq:decay of BV paths}),
whereas bounded variation paths have non-zero $p$-variation. On the
other hand, if we define the ``local $p$-variation'' of a rough
path in the same way as the usual $p$-variation but additionally
by requiring that the mesh size of partitions goes to zero, it is
easy to see that the local $p$-variation of a bounded variation path
is also zero when $p>1$. Therefore, it is not entirely unreasonable
to expect that the quantity $\widetilde{L}_{p}$ recovers the local
$p$-variation of $\mathbf{X}$.

In the present article, we investigate a similar problem for the Brownian
rough path $\mathbf{B}_{t}$, which is the canonical lifting of the
Brownian motion $B_{t}$ as geometric $p$-rough paths for $2<p<3$.
One can equivalently view it as the Brownian motion coupled with the
Lévy area process. It is well-known that $B_{t}$ has a quadratic
variation process, which can be viewed as the local $2$-variation
of Brownian motion in certain probabilistic sense. In view of the
previous discussion, if we define the normalized ``limsup'' 
\begin{equation}
\widetilde{L}_{s,t}\triangleq\limsup_{n\rightarrow\infty}\left(\left(\frac{n}{2}\right)!\left\Vert \int_{s<t_{1}<\cdots<t_{n}<t}\circ\mathrm{d}B_{t_{1}}\otimes\cdots\otimes\circ\mathrm{d}B_{t_{n}}\right\Vert \right)^{\frac{2}{n}}\label{eq: definition of limsup-1}
\end{equation}
($\circ\mathrm{d}$ means the Stratonovich integral) for the Brownian
signature path under suitable tensor norms, one might expect that
$\widetilde{L}_{s,t}$ recovers some sort of quadratic variation of
the Brownian rough path. The aim of the present article is to establish
a result of this kind. Even with Lyons' estimate (\ref{eq:rough factorial decay}),
it is a priori unclear that $\widetilde{L}_{s,t}$ is even finite
since Brownian motion has infinite $2$-variation almost surely.

We are going to show that $\widetilde{L}_{s,t}$ is a deterministic
multiple of $t-s$: $\widetilde{L}_{s,t}=\kappa(t-s)$ for some deterministic
constant $\kappa$. This implies that the natural speed of Brownian
motion (i.e. its quadratic variation) can be recovered from the tail
asymptotics of its signature. In addition, we establish upper and
lower bounds on the constant $\kappa$.

On the one hand, the upper estimate is shown by using general rough
path arguments and does not reflect the tree-reduced nature of the
Brownian rough path at all. The deterministic nature of $\widetilde{L}_{s,t}$
comes from the fact that Brownian motion has independent increments.
The result holds under a wide choice of tensor norms.

On the other hand, the lower estimate is obtained by considering the
hyperbolic development of Brownian motion. Our calculation diverges
early on from the work of Hambly and Lyons \cite{HL10} for the bounded
variation paths, as we make use of martingale arguments instead of
deterministic hyperbolic analysis. Our lower estimate allows us to
conclude that the Brownian rough path is tree-reduced with probability
one and also its natural parametrization can be recovered from the
tail asymptotics of the Brownian signature. In particular, with probability
one, every Brownian rough path is uniquely determined by its signature.
This result is stronger than the existing uniqueness results for Brownian
motion in the literature (c.f. \cite{BG15}, \cite{LQ13}), since
it was only known that the signature determines the Brownian rough
path up to reparametrization.

Our main result on the upper and lower estimates of $\widetilde{L}_{s,t}$
can be summarized as follows.
\begin{thm}
\label{thm: main result}Let $B_{t}=(B_{t}^{1},\cdots,B_{t}^{d})$
be a $d$-dimensional Brownian motion ($d\geqslant2$). Define $\widetilde{L}_{s,t}$
by 

\begin{equation}
\widetilde{L}_{s,t}\triangleq\limsup_{n\rightarrow\infty}\left(\left(\frac{n}{2}\right)!\left\Vert \int_{s<t_{1}<\cdots<t_{n}<t}\circ\mathrm{d}B_{t_{1}}\otimes\cdots\otimes\circ\mathrm{d}B_{t_{n}}\right\Vert \right)^{\frac{2}{n}}\label{eq: definition of limsup}
\end{equation}
where $\Vert\cdot\Vert$ is an admissible norm (see Definition \ref{def:admissible norm}). 

(1) (upper estimate) If each element of the canonical basis $\{\mathrm{e}_{1},\cdots,\mathrm{e}_{d}\}$
of $\mathbb{R}^{d}$ has norm one with respect to $\Vert\cdot\Vert$,
then there exists a deterministic constant $\kappa_{d}\leqslant d^{2}$
depending on the choice of tensor norms, such that with probability
one 
\begin{equation}
\widetilde{L}_{s,t}=\kappa_{d}(t-s)\ \ \ \forall s<t.\label{eq:limsup QV}
\end{equation}

(2) (lower estimate) Under the $l^{p}$-norm ($1\leqslant p\leqslant2$)
on $\mathbb{R}^{d}$ and the associated projective tensor norms on
the tensor products, we have 
\[
\kappa_{d}\geqslant\frac{d-1}{2}.
\]
\end{thm}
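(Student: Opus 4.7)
The two parts rely on rather different ideas: part (1) is essentially an almost-sure subadditive/ergodic argument exploiting independence of Brownian increments, while part (2) adapts the hyperbolic-development philosophy of Hambly and Lyons \cite{HL10} to the rough path setting via stochastic calculus.

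For the upper estimate, the starting point is Chen's identity, which for any $u\in(s,t)$ gives the decomposition $g_{n}^{[s,t]}=\sum_{k=0}^{n}g_{k}^{[s,u]}\otimes g_{n-k}^{[u,t]}$. Admissibility of the tensor norm together with the elementary beta-function bound $(n/2)!/\bigl((k/2)!\,((n-k)/2)!\bigr)\leq C\cdot 2^{n/2}$ then yields a subadditive-type control on $a_{n}(s,t):=(n/2)!\,\|g_{n}^{[s,t]}\|$; this should be the ``new subadditive estimate'' referred to in the abstract. Next, Brownian scaling $B_{ct}\stackrel{d}{=}\sqrt{c}\,B_{t}$ forces $\widetilde{L}_{0,ct}\stackrel{d}{=}c\,\widetilde{L}_{0,1}$, and combining the subadditivity with the independence of Brownian increments along dyadic partitions allows a law-of-large-numbers argument to conclude that $\widetilde{L}_{0,1}$ is almost surely equal to some deterministic constant $\kappa_{d}$; the identity $\widetilde{L}_{s,t}=\kappa_{d}(t-s)$ for all $s<t$ then follows by taking countable intersections over rationals together with a monotonicity argument in $(s,t)$. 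The explicit bound $\kappa_{d}\leq d^{2}$ would come from expanding the Stratonovich iterated integral over the $d^{n}$ multi-indices and controlling each coordinate integral via It\^o's isometry with the usual Stratonovich-to-It\^o corrections.

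For the lower estimate, I would develop $\lambda B$ Stratonovich-wise into the hyperbolic manifold $\mathbb{H}^{d}$ via a flow on $SO(d,1)/SO(d)$; the endpoint admits an exact expansion as a matrix-element functional acting on the signature of $\lambda B$. Writing $\rho_{t}^{\lambda}$ for the hyperbolic distance between the origin and this developed endpoint, It\^o's formula applied to $\cosh\rho_{t}^{\lambda}$ (or an appropriate matrix-entry) produces an SDE whose explicit drift gives an exponential lower bound of the form $\mathbb{E}[\cosh\rho_{t}^{\lambda}]\gtrsim e^{\frac{d-1}{2}\lambda^{2}t}$ as $\lambda\to\infty$. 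In the chosen $\ell^{p}$ and projective tensor norms one also has an upper bound of the form $\cosh\rho_{t}^{\lambda}\leq\sum_{n\geq 0}\lambda^{n}C^{n}\,\|g_{n}^{[0,t]}\|$, where $C$ is controlled by the operator norm of the Lie-algebra generators of the hyperbolic action on $\mathbb{R}^{d}$. Matching the exponential growth rate in $\lambda$ of the two sides forces $\bigl((n/2)!\,\|g_{n}^{[0,t]}\|\bigr)^{2/n}$ to be at least $\frac{d-1}{2}t$ asymptotically, which combined with part (1) yields $\kappa_{d}\geq(d-1)/2$.

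The main obstacle, as I see it, lies in the lower bound. The Hambly--Lyons deterministic argument, which identifies the length of a $C^{1}$ path with its hyperbolic chord by letting a fast-rescaled smooth curve asymptotically behave like a geodesic, fails completely for Brownian motion because the sample paths are nowhere differentiable, so all pointwise length/geodesic comparisons must be replaced by sharp martingale/It\^o estimates interpreted in an averaged sense. The delicate point is to ensure that the exponential growth extracted from the SDE for $\cosh\rho_{t}^{\lambda}$ really matches, term by term in $\lambda^{n}$, the decay rate of the signature entering $\widetilde{L}_{s,t}$; by contrast, the upper bound is conceptually cleaner, the main nontrivial ingredient there being the upgrade from a distributional/expectation statement to almost-sure constancy of $\widetilde{L}_{0,1}$, which is precisely where independence of increments is used.
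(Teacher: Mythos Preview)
Your overall architecture is right on both parts, but each part has a technical gap that the paper handles differently.

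\textbf{Part (1).} The strategy (Chen identity $+$ Brownian scaling $+$ independence $+$ law of large numbers) matches the paper. However, the specific ``beta-function bound'' you propose, namely $(n/2)!/\bigl((k/2)!\,((n-k)/2)!\bigr)\leq C\,2^{n/2}$, is too crude to yield genuine subadditivity. If you plug that pointwise bound into $a_n(s,t)\leq\sum_k\frac{(n/2)!}{(k/2)!((n-k)/2)!}\,a_k(s,u)\,a_{n-k}(u,t)$ you only get $\widetilde l_{s,t}\leq 2\max(\widetilde l_{s,u},\widetilde l_{u,t})$ after taking $(\cdot)^{2/n}$ and $n\to\infty$, and iterating this over dyadic partitions destroys the law-of-large-numbers conclusion. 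The paper instead uses the \emph{neo-classical inequality}
\[
\sum_{k=0}^{n}\frac{a^{k/p}b^{(n-k)/p}}{(k/p)!\,((n-k)/p)!}\;\leq\;p\,\frac{(a+b)^{n/p}}{(n/p)!},
\]
which is exactly the fractional binomial theorem one needs to get $\widetilde l_{s,t}\leq\widetilde l_{s,u}+\widetilde l_{u,t}$. The paper also has to separate off the first $\alpha$ terms (where the limsup has not yet kicked in) before applying this; that bookkeeping is not difficult but is necessary. Your description of the $\kappa_d\leq d^2$ bound is essentially right; the paper gets it via the shuffle formula and Fawcett's expected-signature formula rather than raw It\^o isometry, but the two are close.

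\textbf{Part (2).} Here there is a more serious gap. You propose to apply It\^o's formula to $h_t^\lambda=\cosh\rho_t^\lambda$ and extract a lower bound $\mathbb{E}[h_t^\lambda]\gtrsim e^{\frac{d-1}{2}\lambda^2 t}$ (in fact the drift gives $\mathbb{E}[h_t^\lambda]=e^{d\lambda^2 t/2}$ exactly). The problem is that a lower bound on $\mathbb{E}[h_t^\lambda]$ does \emph{not} give an almost-sure lower bound on $h_t^\lambda$, and ``matching growth rates'' after taking expectations of the pathwise inequality $h_t^\lambda\leq\sum_n\lambda^{2n}\|\mathbb{B}^{2n}_{0,t}\|$ only yields a lower bound on $\limsup_n\bigl(n!\,\mathbb{E}\|\mathbb{B}^{2n}_{0,t}\|\bigr)^{1/n}$, which is not $\kappa_d t$. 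Even knowing from part (1) that $\kappa_d$ is deterministic does not close this, because the random threshold beyond which $((n/2)!\|\mathbb{B}^n\|)^{2/n}\leq\kappa_d t+\varepsilon$ prevents you from dominating $h_t^\lambda$ uniformly and taking expectations.

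The paper's device is to apply It\^o's formula to the \emph{negative} power $(h_t^\lambda)^{-\mu}$ for $0<\mu<d-1$, which yields $\mathbb{E}\bigl[(h_t^\lambda)^{-\mu}\bigr]\leq\exp\bigl(-\tfrac{\lambda^2\mu(d-1-\mu)}{2}t\bigr)$. Markov's inequality then runs in the right direction, giving $\mathbb{P}(h_t^\lambda\leq K)\leq K^\mu\,\mathbb{E}[(h_t^\lambda)^{-\mu}]$, and Borel--Cantelli along $\lambda_m=m$ converts this into an almost-sure lower bound $\limsup_m m^{-2}\log h_t^{\lambda_m}\geq\frac{d-1}{2}t$. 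That, combined with the pathwise bound $\limsup_\lambda\lambda^{-2}\log h_t^\lambda\leq\widetilde L_t$ (which is where the $\ell^p$, $p\leq 2$, projective-norm assumption enters, to make the inner-product functional have norm $\leq 1$), gives $\kappa_d\geq(d-1)/2$. The negative-moment trick is the key idea you are missing.
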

\begin{rem}
The one dimensional case ($d=1$) is uninteresting and the result
of Theorem \ref{thm: main result} holds trivially since $\widetilde{L}_{s,t}\equiv0$
in this case.
\end{rem}
The relation between the iterated integrals and the geometry of $\gamma$
are generally complex and it is therefore somewhat surprising that
a simple formula exists relating between the quadratic variation and
length with the norms of the signature. 

There is a number of related problems to Theorem \ref{thm: main result}.
It is not known whether (\ref{eq:limsup QV}) remains true if we replace
the limsup with lim, nor do we know the exact value of $\kappa_{d}$.
Perhaps the biggest open problem of all, is how to generalise both
(\ref{eq:limsup QV}) and Hambly-Lyons' formula (\ref{eq: the length conjecture})
to rough paths, though this is known to be difficult even for (non-$C^{1}$)
bounded variation paths. We will elaborate more on the open problems
and their ramifications in Section \ref{sec:Further-remarks-and}.

Our article is organized in the following way. In Section 2, we present
some basic notions from rough path theory which are needed for our
analysis. In Section 3, we prove the first part of Theorem \ref{thm: main result}.
In Section 4, we prove the second part of Theorem \ref{thm: main result}.
In that section we also present some crucial details for understanding
the hyperbolic development which seems to be incomplete or missing
in the literature. In Section 5, we present some interesting applications
of our main result to the Brownian rough path itself. In Section 6,
we give some concluding remarks and discuss a few related further
problems.

\section{\label{sec:Notions from rough paths}Notions from rough path theory}

In this section, we present some basic notions from rough path theory
which are needed for our study. Although our main result concerns
solely about Brownian motion, key sections of our argument holds generally
for rough paths and is viewed most naturally in that context. We refer
the reader to the monographs \cite{CTT07}, \cite{FV10}, \cite{LQ02}
for a systematic introduction on rough path theory. 

Suppose that $V$ is a finite dimensional normed vector space. For
each $n\in\mathbb{N},$ define $T^{(n)}(V)\triangleq\oplus_{i=0}^{n}V^{\otimes i},$
and let $T((V))$ be the algebra of formal sequences of homogeneous
tensors $a=(a_{0},a_{1},a_{2},\cdots)$ with $a_{n}\in V^{\otimes n}$
for each $n$.
\begin{defn}
\label{def:admissible norm}A family of tensor norms $\{\|\cdot\|_{V^{\otimes n}}:\ n\geqslant1\}$
on the tensor products is called \textit{admissible }if

(1) for any $a\in V^{\otimes m}$ and $b\in V^{\otimes n},$ 
\begin{equation}
\Vert a\otimes b\Vert_{V^{\otimes(m+n)}}\leqslant\Vert a\Vert_{V^{\otimes m}}\Vert b\Vert_{V^{\otimes n}};\label{eq: admissible condition for norms}
\end{equation}

(2) for any permutation $\sigma$ on $\{1,\ldots,n\}$ and $a\in V^{\otimes n}$,
\[
\Vert\mathcal{P}^{\sigma}(a)\Vert_{V^{\otimes n}}=\Vert a\Vert_{V^{\otimes n}},
\]
where $\mathcal{P}^{\sigma}$ is the linear operator on $V^{\otimes n}$
induced by $a_{1}\otimes\cdots\otimes a_{n}\mapsto a_{\sigma(1)}\otimes\cdots\otimes a_{\sigma(n)}$
for $a_{1},\cdots,a_{n}\in V.$

We call it a family of \textit{cross-norms} if the inequality in (\ref{eq: admissible condition for norms})
is an equality. 
\end{defn}
\begin{defn}
\label{def:projective norm}The \textit{projective tensor norm} on
$V^{\otimes n}$ is defined to be 
\[
\|a\|_{\mathrm{proj}}\triangleq\inf\left\{ \sum_{l}|a_{1}^{(l)}|\cdots|a_{n}^{(l)}|:\ \mathrm{if}\ a=\sum_{l}a_{1}^{(l)}\otimes\cdots\otimes a_{n}^{(l)}\right\} .
\]
\end{defn}
It is known that the projective tensor norm is the largest cross-norm
on $V^{\otimes n}.$ In the case when $V=\mathbb{R}^{d}$ is equipped
with the $l^{1}$-norm, one can see by definition that the projective
tensor norm on $V^{\otimes n}$ is just the $l^{1}$-norm under the
canonical tensor basis induced from the one on $\mathbb{R}^{d}.$

We assume that $V$ is equipped with a family of admissible tensor
norms. Define $\triangle\triangleq\{(s,t):0\leqslant s\leqslant t\leqslant1\}$.
Given $p\geqslant1$, we denote $\lfloor p\rfloor$ as the largest
integer not exceeding $p.$
\begin{defn}
\label{def:rough path}A \textit{multiplicative functional of degree}
$n\in\mathbb{N}$ is a continuous map $\mathbf{X}_{\cdot,\cdot}=\left(1,\mathbb{X}_{\cdot,\cdot}^{1},\cdots,\mathbb{X}_{\cdot,\cdot}^{n}\right):\triangle\rightarrow T^{(n)}\left(V\right)$
which satisfies 
\[
\mathbf{X}_{s,u}\otimes\mathbf{X}_{u,t}=\mathbf{X}_{s,t},\ \mathrm{for}\ 0\leqslant s\leqslant u\leqslant t\leqslant1.
\]
Let $\mathbb{\mathbf{X}},\mathbf{Y}$ be two multiplicative functionals
of degree $n.$ Define 
\[
d_{p}\left(\mathbf{X},\mathbf{Y}\right)\triangleq\max_{1\leqslant i\leqslant n}\sup_{\mathcal{P}}\left(\sum_{l}\left\Vert \mathbb{X}_{t_{l-1},t_{l}}^{i}-\mathbb{Y}_{t_{l-1},t_{l}}^{i}\right\Vert _{V^{\otimes i}}^{\frac{p}{i}}\right)^{\frac{i}{p}},
\]
where the supremum is taken over all possible finite partitions $\mathcal{P}=(t_{0}<t_{1}<\ldots<t_{n})$
of $[0,1]$. $d_{p}$ is called the \textit{p-variation metric}. If
$d_{p}\left(\mathbf{X},\mathbf{1}\right)<\infty$ where $\mathbf{1}=(1,0,\cdots,0)$,
we say that $\mathbf{X}$ has \textit{finite p-variati}on. A multiplicative
functional of degree $\lfloor p\rfloor$ with finite $p$-variation
is called a \textit{p-rough path}. 
\end{defn}
The following important result, proved by Lyons \cite{Lyons98}, asserts
that ``iterated path integrals'' for a rough path are also well
defined.
\begin{thm}
{[}Lyons' extension theorem{]}\label{thm: Lyons' extension theorem}
Let $\mathbf{X}=(1,\mathbb{X}^{1},\cdots,\mathbb{X}^{\lfloor p\rfloor})$
be a $p$-rough path. Then for any $n\geqslant\lfloor p\rfloor+1,$
there exists a unique continuous map $\mathbb{X}^{n}:\ \Delta\rightarrow V^{\otimes n}$,
such that 
\[
\mathbb{X}_{\cdot,\cdot}\triangleq\left(1,\mathbb{X}{}_{\cdot,\cdot}^{1},\cdots,\mathbb{X}_{\cdot,\cdot}^{\lfloor p\rfloor},\cdots,\mathbb{X}_{\cdot,\cdot}^{n},\cdots\right)
\]
is a multiplicative functional in $T\left((V)\right)$ whose projection
onto $T^{(n)}(V)$ has finite $p$-variation for every $n.$
\end{thm}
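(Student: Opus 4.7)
The plan is to construct $\mathbb{X}^n$ inductively on $n \geq \lfloor p \rfloor + 1$ by taking limits of partition-based Riemann products and to exploit multiplicativity both for the construction and for the uniqueness. First I would fix a control function $\omega:\Delta \to [0,\infty)$ (superadditive, continuous, vanishing on the diagonal) such that $\|\mathbb{X}^i_{s,t}\|_{V^{\otimes i}} \leq \omega(s,t)^{i/p}/\beta_i$ for $i = 1,\ldots, \lfloor p \rfloor$, where $\beta_i$ is a combinatorial constant to be determined (morally $(i/p)!$, up to a universal factor). The existence of such $\omega$ follows from the definition of $p$-variation and a standard reparametrization. The induction hypothesis then reads that $\mathbb{X}^i_{s,t}$ has been defined, is multiplicative up to degree $i$, and satisfies the same type of factorial bound for all $i < n$.

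To define $\mathbb{X}^n_{s,t}$, I would take any partition $\mathcal{D} = (s = u_0 < \cdots < u_r = t)$ and set $\mathbb{X}^{n,\mathcal{D}}_{s,t}$ equal to the degree-$n$ component of the tensor product $\mathbf{X}_{u_0,u_1} \otimes \cdots \otimes \mathbf{X}_{u_{r-1},u_r}$, where each $\mathbf{X}_{u_{j-1},u_j}$ is the truncated group-like element up to degree $n-1$ (existing by induction, and with degree-$n$ component set to zero). The central estimate is the \emph{drop-a-point lemma}: if $\mathcal{D}'$ is obtained from $\mathcal{D}$ by removing a single interior point $u_j$, then multiplicativity up to degree $n-1$ forces
\[
\mathbb{X}^{n,\mathcal{D}}_{s,t} - \mathbb{X}^{n,\mathcal{D}'}_{s,t} = \sum_{i=1}^{n-1} \mathbb{X}^{i}_{u_{j-1},u_j} \otimes \mathbb{X}^{n-i}_{u_j,u_{j+1}},
\]
which by the inductive bounds has norm at most $\sum_{i=1}^{n-1}\omega(u_{j-1},u_j)^{i/p}\omega(u_j,u_{j+1})^{(n-i)/p}/(\beta_i\beta_{n-i})$. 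Here the crucial analytic ingredient is the \emph{neo-classical inequality} of Lyons, which gives
\[
\sum_{i=1}^{n-1} \frac{x^{i/p} y^{(n-i)/p}}{\beta_i \beta_{n-i}} \leq \frac{(x+y)^{n/p}}{\beta_n}
\]
with $\beta_n = (n/p)!\,/\,C^n$ for some universal $C = C(p)$. Since $n/p > 1$ once $n \geq \lfloor p \rfloor + 1$, choosing the dropped point so that $\omega(u_{j-1},u_{j+1})$ is minimal on $\mathcal{D}$ and iterating the drop operation yields a telescoping bound whose total contribution is controlled by $\omega(s,t)^{n/p}/\beta_n$. This gives both the convergence of $\mathbb{X}^{n,\mathcal{D}}_{s,t}$ as the mesh of $\mathcal{D}$ tends to zero and the desired $p$-variation estimate for $\mathbb{X}^n$. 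Multiplicativity of the extension follows by concatenating partitions of $[s,u]$ and $[u,t]$ and passing to the limit.

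For uniqueness, suppose $\widetilde{\mathbb{X}}^n$ is a second extension with finite $p$-variation, and let $D_{s,t} = \mathbb{X}^n_{s,t} - \widetilde{\mathbb{X}}^n_{s,t}$. Multiplicativity up to degree $n-1$ (shared by both extensions) and multiplicativity at degree $n$ together imply that $D$ is additive: $D_{s,t} = D_{s,u} + D_{u,t}$. An additive map from $\Delta$ into $V^{\otimes n}$ that also has finite $q$-variation for some $q < 1$ must vanish, and the refinement estimate shows $D$ actually has $(p/n)$-variation, which is zero since $p/n < 1$. Hence $D \equiv 0$ and the extension is unique. The main obstacle throughout is the neo-classical inequality and the accompanying book-keeping of the constants $\beta_n$: one must keep $\beta_n$ close to $(n/p)!$ up to a geometric factor $C^n$ through all iterations so that the $p$-variation bound holds uniformly at every level $n$; without the precise sharp form of the neo-classical estimate, the drop-a-point bound would accumulate too much and destroy the factorial decay.
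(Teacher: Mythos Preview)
Your sketch is correct and is essentially the standard proof of Lyons' extension theorem as given in \cite{Lyons98} (and reproduced in the monographs \cite{CTT07}, \cite{FV10}, \cite{LQ02}): inductive construction of $\mathbb{X}^n$ via partition-based tensor products, the drop-a-point estimate combined with the neo-classical inequality to control the telescoping error, and uniqueness by showing the difference of two extensions is additive with $(p/n)$-variation for $p/n<1$.

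Note, however, that the paper itself does \emph{not} prove this theorem. It is stated as background and attributed directly to Lyons \cite{Lyons98} (``The following important result, proved by Lyons \cite{Lyons98}\ldots''). So there is no ``paper's own proof'' to compare against; your proposal simply reproduces the original argument from the cited reference, which is the only reasonable thing to do here.
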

\begin{rem}
Due to the multiplicative structure, when we consider a rough path,
one could simply look at the path $t\mapsto\mathbf{X}_{0,t}$ whose
increments are defined to be $\mathbf{X}_{s}^{-1}\otimes\mathbf{X}_{t}.$ 
\end{rem}
\medskip{}
\begin{rem}
When $p=1$ and $\mathbf{X}$ is a continuous path with bounded variation,
all the previous notions reduces to the classical iterated path integrals
defined in the sense of Lebesgue-Stieltjes.
\end{rem}
\begin{defn}
Let $\mathbf{X}$ be a $p$-rough path. The path $t\mapsto\mathbb{X}_{0,t}\in T((V))$
defined by Lyons' extension theorem is called the\textit{ signature
path }of $\mathbf{X}.$ The quantity $\mathbb{X}_{0,1}$ is called
the \textit{signature} of $\mathbf{X}.$ 
\end{defn}
Among general rough paths there is a fundamental class of paths called
geometric rough paths.
\begin{defn}
For a continuous path with bounded variation $\gamma:\ [0,1]\rightarrow V$,
define 
\[
\mathbb{X}_{s,t}^{n}=\int_{s<u_{1}<\cdots<u_{n}<t}\mathrm{d}\gamma_{u_{1}}\otimes\cdots\otimes\mathrm{d}\gamma_{u_{n}},\ \ \ n\geqslant1,\ s\leqslant t.
\]
The closure of the space 
\[
\{(1,\mathbb{X}_{s,t}^{1},\ldots,\mathbb{X}_{s,t}^{\lfloor p\rfloor}):\ \gamma\mbox{ is a continuous path with bounded variation}\}
\]
under the $p$-variation metric $d_{p}$ is called the space of \textit{geometric
$p$-rough paths}. 
\end{defn}
\begin{rem}
According to \cite{CTT07}, Theorem 2.15, the iterated integrals of
bounded variation path $\gamma$ in $\mathbb{R}^{d}$ satisfies 
\begin{align}
 & \int_{s<u_{1}<\cdots<u_{n}<t}\mathrm{d}\gamma_{u_{1}}^{i_{1}}\cdots\mathrm{d}\gamma_{u_{n}}^{i_{n}}\cdot\int_{s<u_{n+1}<\cdots<u_{n+k}<t}\mathrm{d}\gamma_{u_{n+1}}^{i_{n+1}}\cdots\mathrm{d}\gamma_{u_{n+k}}^{i_{n+k}}\nonumber \\
= & \sum_{\sigma\in\mathcal{S}(n,k)}\int_{s<u_{1}<\cdots<u_{n+k}<t}\mathrm{d}\gamma_{u_{1}}^{i_{\sigma^{-1}(1)}}\cdots\mathrm{d}\gamma_{u_{n+k}}^{i_{\sigma^{-1}(n+k)}},\label{eq:shuffle product formula-1}
\end{align}
where $\cdot$ is real number multiplication and $\mathcal{S}(n,k)$
contains all permutations $\sigma:\{1,\ldots,n+k\}\rightarrow\{1,\ldots,n+k\}$
such that
\[
\sigma(1)<\ldots<\sigma(n);\;\sigma(n+1)<\ldots<\sigma(n+k).
\]
In other words, the product of $n$-th and $k$-th order iteratred
integrals can be rewritten as a linear combination of $n+k$-th order
iterated integrals. The property (\ref{eq:shuffle product formula-1})
extends to geometric rough paths. An equivalent, but coordinate invariant,
formulation of (\ref{eq:shuffle product formula-1}) is that 
\begin{equation}
\mathbb{X}_{s,t}^{n}\otimes\mathbb{X}_{s,t}^{k}=\sum_{\sigma\in S(n,k)}\mathcal{P}^{\sigma}(\mathbb{X}_{s,t}^{n+k}),\label{eq:Shuffle version 2}
\end{equation}
where $\mathbb{X}^{k}$ is the $n$-th term of the signature (see
Theorem \ref{thm: Lyons' extension theorem}), and $\mathcal{P}^{\sigma}$
is the permutation of tensors map defined in (2), Definition \ref{def:admissible norm}. 

In fact, (\ref{eq:Shuffle version 2}) and the multiplicative property
\begin{equation}
\sum_{k=0}^{n}\mathbb{X}_{s,u}^{k}\otimes\mathbb{X}_{u,t}^{n-k}=\mathbb{X}_{s,t}^{n}\label{eq:multiplicative property}
\end{equation}
are the two most fundamental algebraic properties of iterated integrals. 
\end{rem}
The space of geometric rough paths plays a fundamental role in rough
path theory. In particular, a complete integration and differential
equation theory with respect to geometric rough paths has been established
by Lyons \cite{Lyons98}. The rough path theory has significant applications
in probability theory, mainly due to the fact that a wide class of
interesting stochastic processes can be regarded as geometric rough
paths in a canonical way in the sense of natural approximations.

In particular, it is known that (c.f. \cite{Sipilainen93}) a multidimensional
Brownian motion $B_{t}$ admits a canonical lifting as geometric $p$-rough
path $\mathbf{B}_{t}$ with $p\in(2,3)$. $\mathbf{B}_{t}$ is called
the \textit{Brownian rough path.} The corresponding \textit{Brownian
signature path}, determined by Lyons' extension theorem, is denoted
as 
\[
\mathbb{B}_{s,t}=(1,\mathbb{B}_{s,t}^{1},\mathbb{B}_{s,t}^{2},\cdots),\ \ \ s\leqslant t.
\]
Under the canonical tensor basis on tensor products over $V\triangleq\mathbb{R}^{d}$,
for each word $(i_{1},\cdots,i_{n})$ over $\{1,\cdots,d\}$, the
coefficient of $\mathbb{B}_{s,t}^{n}$ with respect to $\mathrm{e}_{i_{1}}\otimes\cdots\otimes\mathrm{e}_{i_{n}}$
coincides with the iterated Stratonovich integral (c.f. \cite{CTT07}):
\[
\mathbb{B}_{s,t}^{n;i_{1},\cdots,i_{n}}=\int_{s<u_{1}<\cdots<u_{n}<t}\circ\mathrm{d}B_{u_{1}}^{i_{1}}\cdots\circ\mathrm{d}B_{u_{n}}^{i_{n}}.
\]

For a given family of admissible tensor norms, we define 
\[
\widetilde{L}_{s,t}\triangleq\limsup_{n\rightarrow\infty}\left(\left(\frac{n}{2}\right)!\left\Vert \mathbb{B}_{s,t}^{n}\right\Vert \right)^{\frac{2}{n}},\ \ \ s\leqslant t.
\]
Lyons \cite{Lyons98} established a uniform bound for the $n$-term
in the signature, $\mathbb{X}_{s,t}^{n}$, for general $p$-rough
path as 
\begin{equation}
\left\Vert \mathbb{X}_{s,t}^{n}\right\Vert \leq\frac{\omega(\mathbf{X})^{n}}{(\frac{n}{p})!},\label{eq:Lyons estimate}
\end{equation}
where $\omega(\mathbf{X})$ depends on the $p$-variation of $\mathbf{X}$.
Since Brownian motion is a $p$-rough path for all $p>2$ but not
$p=2$, the finiteness of $\tilde{L}_{s,t}$ does not follow from
Lyons estimate (\ref{eq:Lyons estimate}). In section \ref{sec:upper bound},
we will establish yet another bound of iterated integrals, which is
sharper than those available in the literature (e.g. \cite{KloedenPlaten02,BenArous89})
by a geometric factor, and holds not just for $\tilde{L}_{s.t}$ but
also $\sup_{s\leq u\leq v\leq t}\tilde{L}_{u,v}$, which is essential
in dealing with null sets later on. 

\section{\label{sec:upper bound}First part of the main result: the upper
estimate }

In this section, we develop the proof of the first part of Theorem
\ref{thm: main result}. 
\begin{lem}
\label{lem: estimating the sup-L^1 norm for the Brownian signature}The
signature coefficients $\mathbb{B}_{s,t}^{n;i_{1},\cdots,i_{n}}$
satisfy the following estimate: 
\[
\mathbb{E}\left[\sup_{s\leqslant u\leqslant t}\left|\mathbb{B}_{s,u}^{n;i_{1},\cdots,i_{n}}\right|\right]\leqslant\left(\frac{1}{2}+\sqrt{2}\right)\left(\frac{\mathrm{e}}{\sqrt{2}\pi}\right)^{\frac{1}{2}}\frac{2^{\frac{n}{2}}}{(n-2)^{\frac{1}{4}}\sqrt{n!}}(t-s)^{\frac{n}{2}}
\]
for all $s<t,$ $n\geqslant1$ and $1\leqslant i_{1},\cdots,i_{n}\leqslant d.$ 
\end{lem}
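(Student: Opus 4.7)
The plan is to reduce the supremum estimate to an $L^{2}$ estimate via a martingale decomposition coming from a single It\^{o}--Stratonovich conversion, and then to obtain a sharp $L^{2}$ bound by exploiting the shuffle identity together with Stirling's inequalities. Specifically, converting the outermost Stratonovich differential to It\^{o} yields
\[
\mathbb{B}_{s,t}^{n;i_{1},\dots,i_{n}}=M_{t}^{n}+D_{t}^{n},\quad M_{t}^{n}:=\int_{s}^{t}\mathbb{B}_{s,u}^{n-1;i_{1},\dots,i_{n-1}}\,dB_{u}^{i_{n}},\quad D_{t}^{n}:=\tfrac{1}{2}\delta_{i_{n-1},i_{n}}\int_{s}^{t}\mathbb{B}_{s,u}^{n-2;i_{1},\dots,i_{n-2}}\,du.
\]
Here $M^{n}$ is a Brownian martingale in $t$ (with $s$ fixed), while $D^{n}$ has finite variation. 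Doob's $L^{2}$ maximal inequality combined with the It\^{o} isometry gives $\mathbb{E}[\sup_{s\le v\le t}|M_{v}^{n}|]\le 2\bigl(\int_{s}^{t}\|\mathbb{B}_{s,u}^{n-1}\|_{L^{2}}^{2}\,du\bigr)^{1/2}$, while the pathwise bound $\sup_{v}|D_{v}^{n}|\le\tfrac{1}{2}\int_{s}^{t}|\mathbb{B}_{s,u}^{n-2}|\,du$ followed by Cauchy--Schwarz yields $\mathbb{E}[\sup_{v}|D_{v}^{n}|]\le\tfrac{1}{2}\int_{s}^{t}\|\mathbb{B}_{s,u}^{n-2}\|_{L^{2}}\,du$. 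Thus everything reduces to an $L^{2}$ bound on iterated Stratonovich integrals themselves.

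For the $L^{2}$ bound, I would apply the shuffle identity to rewrite $(\mathbb{B}_{s,t}^{n;i_{1},\dots,i_{n}})^{2}$ as a sum over $S(n,n)$ of order-$2n$ iterated Stratonovich integrals. Iterating the same It\^{o}--Stratonovich correction on each $2n$-th order term shows that only the complete adjacent pairing $\{1,2\},\{3,4\},\dots,\{2n-1,2n\}$ survives in the mean, contributing $2^{-n}\prod_{k=1}^{n}\delta_{j_{2k-1},j_{2k}}\cdot(t-s)^{n}/n!$ from the volume of the $n$-simplex. Bounding the number of shuffles producing matched adjacent pairs crudely by $|S(n,n)|=\binom{2n}{n}$ gives
\[
\mathbb{E}\bigl[(\mathbb{B}_{s,t}^{n;i_{1},\dots,i_{n}})^{2}\bigr]\le\binom{2n}{n}\frac{(t-s)^{n}}{2^{n}\,n!}.
\]
Combining the non-asymptotic Stirling inequalities $n!\ge\sqrt{2\pi}\,n^{n+1/2}e^{-n}$ and $(2n)!\le e\,(2n)^{2n+1/2}e^{-2n}$ produces $\binom{2n}{n}\le\tfrac{e}{\sqrt{2}\pi}\cdot 4^{n}/\sqrt{n}$, hence
\[
\|\mathbb{B}_{s,t}^{n;i_{1},\dots,i_{n}}\|_{L^{2}}\le\Bigl(\tfrac{e}{\sqrt{2}\pi}\Bigr)^{1/2}\frac{2^{n/2}}{n^{1/4}\sqrt{n!}}(t-s)^{n/2}.
\]

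To assemble, I would substitute this $L^{2}$ bound (at orders $n-1$ and $n-2$) into the two estimates from the first paragraph and perform the elementary $u$-integrations, using $\sqrt{n(n-1)}/n\le 1$ to convert $\sqrt{(n-2)!}$ into $\sqrt{n!}$ in the drift contribution. The martingale part then yields $\sqrt{2}\,(e/\sqrt{2}\pi)^{1/2}\cdot 2^{n/2}(t-s)^{n/2}/[(n-1)^{1/4}\sqrt{n!}]$ and the drift part yields $\tfrac{1}{2}\,(e/\sqrt{2}\pi)^{1/2}\cdot 2^{n/2}(t-s)^{n/2}/[(n-2)^{1/4}\sqrt{n!}]$. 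Bounding $(n-1)^{-1/4}\le(n-2)^{-1/4}$ and summing reproduces exactly the claimed prefactor $(\tfrac{1}{2}+\sqrt{2})(e/\sqrt{2}\pi)^{1/2}$. The main technical obstacle lies in the $L^{2}$ step: verifying rigorously through the iterated It\^{o}--Stratonovich expansion that only the complete adjacent pairing contributes to the expectation of a $2n$-th order iterated Stratonovich integral, and then choosing the right non-asymptotic Stirling inequalities so as to produce the constant in the exact stated form.
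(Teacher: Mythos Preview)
Your proposal is correct and follows essentially the same route as the paper: convert the outermost Stratonovich differential to It\^{o}, control the martingale part by Doob's $L^{2}$ inequality and the drift part by Cauchy--Schwarz, and feed in a second-moment bound obtained from the shuffle identity combined with Stirling. The only substantive difference is in how the second-moment bound is obtained: where you propose to expand the $2n$-th order Stratonovich integral via iterated It\^{o}--Stratonovich corrections to isolate the adjacent-pairing contribution, the paper simply cites the closed formula $\mathbb{E}[\mathbb{B}_{0,u}^{2n}]=\frac{u^{n}}{2^{n}n!}\bigl(\sum_{i}\mathrm{e}_{i}\otimes\mathrm{e}_{i}\bigr)^{\otimes n}$ (Fawcett; Lyons--Victoir), from which every coefficient is immediately seen to be either $0$ or $u^{n}/(2^{n}n!)$. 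Your in-line derivation is exactly a proof of that formula, so what you flag as the ``main technical obstacle'' is in fact a known result that can be invoked directly; otherwise the two arguments are identical down to the constants.
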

\begin{proof}
By translation, it suffices to consider the case when $s=0.$

We first estimate the second moment of $\mathbb{B}_{0,u}^{n;i_{1},\cdots,i_{n}}.$
Recall from (\ref{eq:shuffle product formula-1}) the shuffle product
formula
\begin{align}
\mathbb{B}_{s,t}^{n;i_{1},\cdots,i_{n}}\cdot\mathbb{B}_{s,t}^{k;i_{n+1},\cdots,i_{n+k}}= & \sum_{\sigma\in\mathcal{S}(n,k)}\mathbb{B}_{s,t}^{n+k;i_{\sigma^{-1}(1)},\ldots,i_{\sigma^{-1}(n+k)}},\label{eq:shuffle product formula}
\end{align}
where $\cdot$ is real number multiplication and $\mathcal{S}(n,k)$
denotes the set of all permutations on $\{1,\ldots,n+k\}$ such that
\[
\sigma(1)<\ldots<\sigma(n);\;\sigma(n+1)<\ldots<\sigma(n+k).
\]
Applying this formula (\ref{eq:shuffle product formula}) and taking
expectation, we have
\[
\mathbb{E}\left[\left|\mathbb{B}_{0,u}^{n;i_{1},\cdots,i_{n}}\right|^{2}\right]=\sum_{\sigma\in\mathcal{S}(n,n)}\mathbb{E}\left[\mathbb{B}_{0,u}^{2n;j_{\sigma^{-1}(1)},\cdots,j_{\sigma^{-1}(2n)}}\right],
\]
where $(j_{1},\cdots,j_{2n})\triangleq(i_{1},\cdots,i_{n},i_{1},\cdots,i_{n})$.
Since $|\mathcal{S}(n,n)|=\frac{(2n)!}{(n!)^{2}}$, we have 
\[
\mathbb{E}\left[\left|\mathbb{B}_{0,u}^{n;i_{1},\cdots,i_{n}}\right|^{2}\right]\leq\frac{(2n)!}{(n!)^{2}}\max_{\sigma\in\mathcal{S}(n,n)}\left|\mathbb{E}\left[\mathbb{B}_{0,u}^{2n;j_{\sigma^{-1}(1)},\cdots,j_{\sigma^{-1}(2n)}}\right]\right|
\]
On the other hand, we know from \cite{Fawcett03} (see also Prop 4.10
in \cite{LyonsVictoir04}) that 
\[
\mathbb{E}\left[\mathbb{B}_{0,u}^{2n}\right]=\frac{u^{n}}{n!2^{n}}\left(\sum_{i=1}^{d}\mathrm{e}_{i}\otimes\mathrm{e}_{i}\right)^{\otimes n}.
\]
In particular, every coefficient of basis elements $\mathrm{e}_{k_{1}}\otimes\ldots\otimes\mathrm{e}_{k_{2n}}$
in $\mathbb{E}\left[\mathbb{B}_{0,u}^{2n}\right]$ is either zero
or $\frac{u^{n}}{n!2^{n}}.$ Therefore, 
\begin{align}
\mathbb{E}\left[\left|\mathbb{B}_{0,u}^{n;i_{1},\cdots,i_{n}}\right|^{2}\right] & \leqslant\frac{(2n)!}{(n!)^{2}}\cdot\frac{u^{n}}{n!2^{n}}\nonumber \\
 & \leqslant\frac{\mathrm{e}(2n)^{2n+\frac{1}{2}}\mathrm{e}^{-2n}}{2\pi n^{2n+1}\mathrm{e}^{-2n}}\cdot\frac{u^{n}}{n!2^{n}}\quad\text{(by Stirling's approximation)}\nonumber \\
 & =\frac{\mathrm{e}}{\sqrt{2}\pi}\frac{2^{n}}{\sqrt{n}n!}u^{n}.\label{eq: second moment estimate for the Brownian signature}
\end{align}
Secondly, by the definition of iterated integral, 
\[
\mathbb{B}_{0,u}^{n;i_{1},\cdots,i_{n}}=\int_{0}^{u}\mathbb{B}_{0,t}^{n-1;i_{1},\cdots,i_{n-1}}\circ\mathrm{d}B_{t}^{i_{n}}
\]
and hence
\begin{align}
d\mathbb{B}_{0,u}^{n;i_{1},\cdots,i_{n}} & =\mathbb{B}_{0,u}^{n-1;i_{1},\cdots,i_{n-1}}\circ\mathrm{d}B_{u}^{i_{n}}\nonumber \\
 & =\mathbb{B}_{0,u}^{n-1;i_{1},\cdots,i_{n-1}}\cdot\mathrm{d}B_{u}^{i_{n}}+\frac{1}{2}d\mathbb{B}_{0,u}^{n-1;i_{1},\cdots,i_{n-1}}\cdot\mathrm{d}B_{u}^{i_{n}}\;\text{(Ito to Stratonovich)}\nonumber \\
 & =\mathbb{B}_{0,u}^{n-1;i_{1},\cdots,i_{n-1}}\cdot\mathrm{d}B_{u}^{i_{n}}+\frac{1}{2}\left(\mathbb{B}_{0,u}^{n-2;i_{1},\cdots,i_{n-2}}\circ dB_{u}^{i_{n-1}}\right)\cdot\mathrm{d}B_{u}^{i_{n}}\label{eq:SDE signature}\\
 & =\mathbb{B}_{0,u}^{n-1;i_{1},\cdots,i_{n-1}}\cdot\mathrm{d}B_{u}^{i_{n}}+\frac{1}{2}\delta_{i_{n-1},i_{n}}\mathbb{B}_{0,u}^{n-2;i_{1},\cdots,i_{n-2}}\mathrm{d}u.\label{SDE signature 2}
\end{align}
By integrating and taking supremum, 
\begin{align*}
 & \mathbb{E}\left[\sup_{0\leqslant u\leqslant t}\left|\mathbb{B}_{0,u}^{n;i_{1},\cdots,i_{n}}\right|\right]\\
 & \leqslant\mathbb{E}\left[\sup_{0\leqslant u\leqslant t}\left|\int_{0}^{u}\mathbb{B}_{0,v}^{n-1;i_{1},\cdots,i_{n-1}}\cdot\mathrm{d}B_{v}^{i_{n}}\right|\right]+\sup_{0\leqslant u\leqslant t}\frac{1}{2}\int_{0}^{u}\mathbb{E}\left[\left|\mathbb{B}_{0,v}^{n-2;i_{1},\cdots,i_{n-2}}\right|\right]\mathrm{d}v\\
 & \leqslant\mathbb{E}\left[\sup_{0\leqslant u\leqslant t}\left|\int_{0}^{u}\mathbb{B}_{0,v}^{n-1;i_{1},\cdots,i_{n-1}}\cdot\mathrm{d}B_{v}^{i_{n}}\right|\right]+\frac{1}{2}\int_{0}^{t}\sqrt{\mathbb{E}\left[\left|\mathbb{B}_{0,v}^{n-2;i_{1},\cdots,i_{n-2}}\right|^{2}\right]}\mathrm{d}v.
\end{align*}
It follows from (\ref{eq: second moment estimate for the Brownian signature})
that 
\begin{align*}
 & \mathbb{E}\left[\sup_{0\leqslant u\leqslant t}\left|\mathbb{B}_{0,u}^{n;i_{1},\cdots,i_{n}}\right|\right]\\
 & \leqslant\mathbb{E}\left[\sup_{0\leqslant u\leqslant t}\left|\int_{0}^{u}\mathbb{B}_{0,v}^{n-1;i_{1},\cdots,i_{n-1}}\cdot\mathrm{d}B_{v}^{i_{n}}\right|\right]+\frac{1}{2}\int_{0}^{t}\left(\frac{\mathrm{e}}{\sqrt{2}\pi}\frac{2^{n-2}}{\sqrt{n-2}(n-2)!}v^{n-2}\right)^{\frac{1}{2}}\mathrm{d}v\\
 & =\mathbb{E}\left[\sup_{0\leqslant u\leqslant t}\left|\int_{0}^{u}\mathbb{B}_{0,v}^{n-1;i_{1},\cdots,i_{n-1}}\cdot\mathrm{d}B_{v}^{i_{n}}\right|\right]+\frac{1}{2}\left(\frac{\mathrm{e}}{\sqrt{2}\pi}\right)^{\frac{1}{2}}\frac{2^{\frac{n}{2}}}{(n-2)^{\frac{1}{4}}\sqrt{(n-2)!}n}t^{\frac{n}{2}}\\
 & \leqslant\mathbb{E}\left[\sup_{0\leqslant u\leqslant t}\left|\int_{0}^{u}\mathbb{B}_{0,v}^{n-1;i_{1},\cdots,i_{n-1}}\cdot\mathrm{d}B_{v}^{i_{n}}\right|\right]+\frac{1}{2}\left(\frac{\mathrm{e}}{\sqrt{2}\pi}\right)^{\frac{1}{2}}\frac{2^{\frac{n}{2}}}{(n-2)^{\frac{1}{4}}\sqrt{n!}}t^{\frac{n}{2}}.
\end{align*}
The first term can be estimated easily by using Doob's $L^{p}$-inequality:
\begin{align}
\mathbb{E}\left[\sup_{0\leqslant u\leqslant t}\left|\int_{0}^{u}\mathbb{B}_{0,v}^{n-1;i_{1},\cdots,i_{n-1}}\cdot\mathrm{d}B_{v}^{i_{n}}\right|\right] & \leqslant\left\Vert \sup_{0\leqslant u\leqslant t}\left|\int_{0}^{u}\mathbb{B}_{0,v}^{n-1;i_{1},\cdots,i_{n-1}}\cdot\mathrm{d}B_{v}^{i_{n}}\right|\right\Vert _{2}\nonumber \\
 & \leqslant2\left\Vert \int_{0}^{t}\mathbb{B}_{0,v}^{n-1;i_{1},\cdots,i_{n-1}}\cdot\mathrm{d}B_{v}^{i_{n}}\right\Vert _{2}\nonumber \\
 & =2\left(\int_{0}^{t}\mathbb{E}\left[\left|\mathbb{B}_{0,v}^{n-1;i_{1},\cdots,i_{n-1}}\right|^{2}\right]\mathrm{d}v\right)^{\frac{1}{2}}\nonumber \\
 & \leqslant2\left(\int_{0}^{t}\frac{\mathrm{e}}{\sqrt{2}\pi}\frac{2^{n-1}}{\sqrt{n-1}(n-1)!}v^{n-1}\mathrm{d}v\right)^{\frac{1}{2}}\label{eq:supremem of ito integral}\\
 & =\sqrt{2}\left(\frac{\mathrm{e}}{\sqrt{2}\pi}\right)^{\frac{1}{2}}\frac{2^{\frac{n}{2}}}{(n-2)^{\frac{1}{4}}\sqrt{n!}}t^{\frac{n}{2}}.\nonumber 
\end{align}

Now the desired estimate follows immediately. 
\end{proof}
\begin{rem}
Second moment estimate on iterated Stratonovich's integrals was studied
by Ben Arous \cite{BenArous89} through iterated Itô's integrals.
Here the estimate (\ref{eq: second moment estimate for the Brownian signature})
we obtained through the shuffle product formula and the Brownian expected
signature is sharper in the exponential factor. 
\end{rem}
Now we are able to establish the following main upper estimate.
\begin{prop}
\label{prop: upper bound}Suppose that the tensor products $(\mathbb{R}^{d})^{\otimes n}$
are equipped with given admissible norms, under which each element
of the standard basis $\{\mathrm{e}_{1},\cdots,\mathrm{e}_{d}\}$
of $\mathbb{R}^{d}$ has norm one. Then for each $s<t,$ with probability
one, we have 
\[
\max\left\{ \limsup_{n\rightarrow\infty}\left(\left(\frac{n}{2}\right)!\sup_{s\leqslant u\leqslant t}\left\Vert \mathbb{B}_{s,u}^{n}\right\Vert \right)^{\frac{2}{n}},\limsup_{n\rightarrow\infty}\left(\left(\frac{n}{2}\right)!\sup_{s\leqslant u\leqslant t}\left\Vert \mathbb{B}_{u,t}^{n}\right\Vert \right)^{\frac{2}{n}}\right\} \leqslant d^{2}(t-s).
\]
\end{prop}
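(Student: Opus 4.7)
The plan is to bound the admissible tensor norm by the sum of absolute values of coordinates, apply Lemma \ref{lem: estimating the sup-L^1 norm for the Brownian signature} term by term, and then convert the resulting expectation bound into an almost sure statement via Markov's inequality and Borel--Cantelli. Admissibility together with $\|\mathrm{e}_i\|=1$ gives $\|\mathrm{e}_{i_1}\otimes\cdots\otimes\mathrm{e}_{i_n}\|\le 1$ by induction, hence
\[
\sup_{s\le u\le t}\|\mathbb{B}^n_{s,u}\|\le \sum_{1\le i_1,\ldots,i_n\le d}\sup_{s\le u\le t}\bigl|\mathbb{B}^{n;i_1,\ldots,i_n}_{s,u}\bigr|.
\]
Taking expectation and invoking Lemma \ref{lem: estimating the sup-L^1 norm for the Brownian signature} produces
\[
\mathbb{E}\!\left[\sup_{s\le u\le t}\|\mathbb{B}^n_{s,u}\|\right]\le C\,d^n\,\frac{2^{n/2}(t-s)^{n/2}}{(n-2)^{1/4}\sqrt{n!}}.
\]

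Multiplying by $(n/2)!$ and invoking Stirling's formula shows that $(n/2)!\cdot 2^{n/2}/\sqrt{n!}$ grows only polynomially in $n$ (in fact like $n^{1/4}$), so there exists a polynomial $\pi(n)$ with
\[
(n/2)!\,\mathbb{E}\!\left[\sup_{s\le u\le t}\|\mathbb{B}^n_{s,u}\|\right] \le \pi(n)\,\bigl(d^2(t-s)\bigr)^{n/2}.
\]
Fix $\varepsilon>0$. Markov's inequality then yields
\[
\mathbb{P}\!\left((n/2)!\sup_{s\le u\le t}\|\mathbb{B}^n_{s,u}\|>(d^2(t-s)+\varepsilon)^{n/2}\right)\le \pi(n)\!\left(\frac{d^2(t-s)}{d^2(t-s)+\varepsilon}\right)^{n/2},
\]
whose right-hand side is summable in $n$. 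Borel--Cantelli therefore implies that almost surely the event on the left occurs only finitely often, so $\limsup_n\bigl((n/2)!\sup_u\|\mathbb{B}^n_{s,u}\|\bigr)^{2/n}\le d^2(t-s)+\varepsilon$ almost surely. Intersecting over $\varepsilon=1/k$, $k\in\mathbb{N}$, gives the bound for the first term.

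For the second term $\sup_{s\le u\le t}\|\mathbb{B}^n_{u,t}\|$, the strategy is to reduce to the first by time reversal. Setting $\tilde{B}_r\triangleq B_t-B_{s+t-r}$ for $r\in[s,t]$ produces a Brownian motion on $[s,t]$, and a change of variables in the iterated Stratonovich integrals (using the ordinary-calculus transformation rules that Stratonovich enjoys) gives
\[
\mathbb{B}^{n;i_1,\ldots,i_n}_{u,t}=(-1)^n\,\widetilde{\mathbb{B}}^{n;i_n,\ldots,i_1}_{s,s+t-u},
\]
so that Lemma \ref{lem: estimating the sup-L^1 norm for the Brownian signature} applied to $\tilde B$ controls $\sup_u\|\mathbb{B}^n_{u,t}\|$ by exactly the same bound, and the previous Borel--Cantelli argument concludes. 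The main technical tension is the calibration between $(n/2)!$ on the left and $\sqrt{n!}/2^{n/2}$ on the right in Lemma \ref{lem: estimating the sup-L^1 norm for the Brownian signature}: only because of the near-cancellation of these two factors (via Stirling) does the limsup stabilize at the finite order $(d^2(t-s))^{n/2}$ rather than blow up, while the coefficient-wise summation contributes the factor $d^n$ that produces the constant $d^2$.
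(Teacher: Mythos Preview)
Your proof is correct and follows essentially the same route as the paper: bound the admissible norm by the $l^1$ sum of coordinates, apply Lemma~\ref{lem: estimating the sup-L^1 norm for the Brownian signature}, use Markov plus Borel--Cantelli with a slack parameter tending to zero, and handle $\sup_u\|\mathbb{B}^n_{u,t}\|$ by time reversal. The only cosmetic differences are that the paper introduces the slack as $r>t-s$ rather than $\varepsilon>0$ and applies Stirling after Borel--Cantelli rather than before, and that for the reversed endpoint the paper works at the tensor level (invoking permutation-invariance of admissible norms to get $\|\mathbb{B}^n_{u,t}\|=\|\mathbb{W}^n_{0,t-u}\|$) whereas you argue coordinate-wise; since you have already dominated the norm by the $l^1$ sum, your version does not even need that invariance.
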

\begin{proof}
Since the tensor norms are admissible, for each multi-index, 
\[
\Vert\mathrm{e}_{i_{1}}\otimes\cdots\otimes\mathrm{e}_{i_{n}}\Vert\leq\Vert\mathrm{e}_{i_{1}}\Vert\Vert\mathrm{e}_{i_{2}}\Vert\ldots\Vert\mathrm{e}_{i_{n}}\Vert=1.
\]
This together with the triangle inequality implies that 
\begin{align*}
\left\Vert \mathbb{B}_{s,u}^{n}\right\Vert  & =\left\Vert \sum_{i_{1},\cdots,i_{n}=1}^{d}\mathbb{B}_{s,u}^{n;i_{1},\cdots,i_{n}}\mathrm{e}_{i_{1}}\otimes\cdots\otimes\mathrm{e}_{i_{n}}\right\Vert \\
 & \leqslant\sum_{i_{1},\cdots,i_{n}=1}^{d}\left|\mathbb{B}_{s,u}^{n;i_{1},\cdots,i_{n}}\right|,
\end{align*}
and thus 
\begin{equation}
\mathbb{E}\left[\sup_{s\leqslant u\leqslant t}\|\mathbb{B}_{s,t}^{n}\|\right]\leqslant\sum_{i_{1},\cdots,i_{n}=1}^{d}\mathbb{E}\left[\sup_{s\leqslant u\leqslant t}\left|\mathbb{B}_{s,u}^{n;i_{1},\cdots,i_{n}}\right|\right].\label{eq:after triangle}
\end{equation}
The summand on the right hand side of (\ref{eq:after triangle}) can
be bounded by Lemma \ref{lem: estimating the sup-L^1 norm for the Brownian signature},
and we arrive at 
\[
\mathbb{E}\left[\sup_{s\leqslant u\leqslant t}\|\mathbb{B}_{s,u}^{n}\|\right]\leqslant d^{n}\cdot\frac{C2^{\frac{n}{2}}}{(n-2)^{\frac{1}{4}}\sqrt{n!}}(t-s)^{\frac{n}{2}},
\]
where 
\[
C\triangleq\left(\frac{1}{2}+\sqrt{2}\right)\left(\frac{\mathrm{e}}{\sqrt{2}\pi}\right)^{\frac{1}{2}}.
\]

Now for each $r>(t-s),$ we have 
\[
\mathbb{P}\left(\sup_{s\leqslant u\leqslant t}\|\mathbb{B}_{s,u}^{n}\|>\frac{Cd^{n}2^{\frac{n}{2}}}{(n-2)^{\frac{1}{4}}\sqrt{n!}}r^{\frac{n}{2}}\right)\leqslant\left(\frac{t-s}{r}\right)^{\frac{n}{2}}.
\]
By the Borel-Cantelli lemma, with probability one (with null set depending
on $s$ and $t$), 
\[
\sup_{s\leqslant u\leqslant t}\|\mathbb{B}_{s,u}^{n}\|\leqslant\frac{Cd^{n}2^{\frac{n}{2}}}{(n-2)^{\frac{1}{4}}\sqrt{n!}}r^{\frac{n}{2}}
\]
for all sufficiently large $n.$ It follows from Stirling's approximation
that with probability one, 
\[
\limsup_{n\rightarrow\infty}\left(\left(\frac{n}{2}\right)!\sup_{s\leqslant u\leqslant t}\left\Vert \mathbb{B}_{s,u}^{n}\right\Vert \right)^{\frac{2}{n}}\leqslant\lim_{n\rightarrow\infty}\left(\left(\frac{n}{2}\right)!\frac{Cd^{n}2^{\frac{n}{2}}}{(n-2)^{\frac{1}{4}}\sqrt{n!}}r^{\frac{n}{2}}\right)^{\frac{2}{n}}=d^{2}r.
\]
By taking a rational sequence $r\downarrow(t-s)$, we conclude that
with probability one (with null set depending on $t$ and $s$), 
\begin{equation}
\limsup_{n\rightarrow\infty}\left(\left(\frac{n}{2}\right)!\sup_{s\leqslant u\leqslant t}\left\Vert \mathbb{B}_{s,u}^{n}\right\Vert \right)^{\frac{2}{n}}\leqslant d^{2}(t-s).\label{eq: one-sided supremum estimate}
\end{equation}

Next we will bound $\sup_{v\leq u\leq t}\Vert\mathbb{B}_{u,t}^{n}\Vert$
using the reversability of Brownian motion. For the estimate involving
$\mathbb{B}_{u,t}^{n},$ observe that 
\begin{align*}
\mathbb{B}_{u,t}^{n} & =\int_{u<v_{1}<\cdots<v_{n}<t}\mathrm{d}B_{v_{1}}\otimes\cdots\otimes\mathrm{d}B_{v_{n}}\\
 & =\int_{0<r_{n}<\cdots<r_{1}<t-u}\mathrm{d}B_{t-r_{1}}\otimes\cdots\otimes\mathrm{d}B_{t-r_{n}}\\
 & =\mathcal{P}^{\tau}\left(\int_{0<r_{1}<\cdots<r_{n}<t-u}\mathrm{d}W_{r_{1}}\otimes\cdots\otimes\mathrm{d}W_{r_{n}}\right),
\end{align*}
where $W_{r}\triangleq B_{t-r}-B_{t}$ $(0\leqslant r\leqslant t-u)$
is again a Brownian motion and $\mathcal{P}^{\tau}$ is the linear
transformation on $\left(\mathbb{R}^{d}\right)^{\otimes n}$ determined
by $\xi_{1}\otimes\cdots\otimes\xi_{n}\mapsto\xi_{n}\otimes\cdots\otimes\xi_{1}.$
It follows that 
\[
\|\mathbb{B}_{u,t}^{n}\|=\left\Vert \mathbb{W}_{0,t-u}^{n}\right\Vert .
\]
Therefore, 
\[
\sup_{s\leqslant u\leqslant t}\|\mathbb{B}_{u,t}^{n}\|=\sup_{0\leqslant v\leqslant t-s}\|\mathbb{W}_{0,v}^{n}\|.
\]
Therefore, what we have proven before shows that 
\begin{align*}
\limsup_{n\rightarrow\infty}\left(\left(\frac{n}{2}\right)!\sup_{s\leqslant u\leqslant t}\left\Vert \mathbb{B}_{u,t}^{n}\right\Vert \right)^{\frac{2}{n}} & =\limsup_{n\rightarrow\infty}\left(\left(\frac{n}{2}\right)!\sup_{0\leqslant v\leqslant t-s}\left\Vert \mathbb{W}_{0,v}^{n}\right\Vert \right)^{\frac{2}{n}}\\
 & \leqslant d^{2}(t-s)
\end{align*}
for almost surely. 
\end{proof}
Recall that $\widetilde{L}_{s,t}$ is defined by
\[
\widetilde{L}_{s,t}\triangleq\limsup_{n\rightarrow\infty}\left(\left(\frac{n}{2}\right)!\left\Vert \int_{s<t_{1}<\cdots<t_{n}<t}\circ\mathrm{d}B_{t_{1}}\otimes\cdots\otimes\circ\mathrm{d}B_{t_{n}}\right\Vert \right)^{\frac{2}{n}}
\]
under given admissible tensor norms. It is immediate from Proposition
\ref{prop: upper bound} that $\widetilde{L}_{s,t}\leqslant d^{2}(t-s)$
for almost surely.

Now we are going to show that $\widetilde{L}_{s,t}$ is almost surely
a deterministic constant.

Recall that $g\in T((\mathbb{R}^{d}))$ is a group-like element if
and only if $g=(1,g^{1},g^{2},\cdots)$ satisfies 
\[
g^{n}\otimes g^{k}=\sum_{\sigma\in\mathcal{S}(n,k)}\mathcal{P}^{\sigma}\big(g^{n+k}\big)
\]
where, as mentioned, $\mathcal{P}^{\sigma}$ is the unique linear
map on $\big(\mathbb{R}^{d}\big)^{\otimes(n+k)}$ such that 
\[
\mathcal{P}^{\sigma}(v_{1}\otimes\ldots\otimes v_{n+k})=v_{\sigma(1)}\otimes\ldots\otimes v_{\sigma(n+k)}
\]
and $\mathcal{S}(n,k)$ denotes the set of permutations on $n+k$
elements such that 
\[
\sigma(1)<\sigma(2)<\ldots<\sigma(n),\;\sigma(n+1)<\ldots<\sigma(n+k).
\]
This is in fact equivalent to the shuffle product formula (\ref{eq:shuffle product formula})
mentioned earlier. In particular, the signature of a geometric rough
path is always a group-like element. 
\begin{lem}
\label{lem: infinitely many non-zero components for a nontrivial group-like element}Let
$g=(1,g^{1},g^{2},\cdots)$ be a non-trivial group-like element in
the tensor algebra $T((\mathbb{R}^{d}))$, where the tensor products
are equipped with given admissible norms. Then $g$ has infinitely
many non-zero components. 
\end{lem}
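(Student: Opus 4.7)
The plan is to argue by contradiction from the shuffle identity. Non-triviality of $g$ means at least one of the components $g^1, g^2, \ldots$ is non-zero, so I will suppose for contradiction that only finitely many are non-zero, and let $N \geq 1$ denote the largest index with $g^N \neq 0$. Then $g^m = 0$ for every $m > N$; in particular $g^{2N} = 0$.

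Applying the shuffle/group-like relation with $n = k = N$, I would obtain
\[
g^N \otimes g^N = \sum_{\sigma \in \mathcal{S}(N,N)} \mathcal{P}^{\sigma}\bigl(g^{2N}\bigr) = 0,
\]
since each summand vanishes. The conclusion I want is that this forces $g^N = 0$, which contradicts the choice of $N$ and completes the proof.

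The one (very mild) point to justify is that the tensor square of a non-zero element of a finite dimensional vector space is non-zero: if $g^N \neq 0$ in $(\mathbb{R}^d)^{\otimes N}$, pick any linear functional $\varphi$ on $(\mathbb{R}^d)^{\otimes N}$ with $\varphi(g^N) \neq 0$; then $(\varphi \otimes \varphi)(g^N \otimes g^N) = \varphi(g^N)^2 \neq 0$, so $g^N \otimes g^N \neq 0$ in $(\mathbb{R}^d)^{\otimes 2N}$. Combined with the displayed identity, this yields the desired contradiction.

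I do not expect any real obstacle here; the argument is purely algebraic and the admissible tensor norms play no role in the statement — the conclusion is about components being non-zero in the pure tensor space. The only thing worth emphasising in the write-up is that the shuffle identity (\ref{eq:Shuffle version 2}), listed in the preceding remark as equivalent to the scalar formula (\ref{eq:shuffle product formula-1}), is exactly the hypothesis needed when one specialises to $n = k = N$.
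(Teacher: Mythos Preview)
Your proof is correct and rests on the same key ingredient as the paper's: the shuffle identity applied to a tensor power of a non-zero component. The paper argues directly rather than by contradiction --- picking any $k$ with $g^k \neq 0$, it applies the $n$-fold shuffle to obtain $(g^k)^{\otimes n} = \sum_{\sigma \in \mathcal{S}(k,\ldots,k)} \mathcal{P}^\sigma(g^{nk})$ and then invokes the admissible-norm axioms to conclude $\|g^k\|^n \leqslant \frac{(nk)!}{(k!)^n}\|g^{nk}\|$, hence $g^{nk} \neq 0$ for every $n$. Your route via the tensor square and a separating linear functional is slightly more elementary, and your observation that the admissible norms play no role in this purely algebraic statement is well taken: the paper's use of them here is a convenience, not a necessity.
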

\begin{proof}
Suppose that $g^{k}\neq0$ for some $k\geqslant1.$ According to the
shuffle product formula, for each $n\geqslant1,$ 
\[
\left(g^{k}\right)^{\otimes n}=\sum_{\sigma\in\mathcal{S}(k,\cdots,k)}\mathcal{P}^{\sigma}\left(g^{nk}\right),
\]
with $\mathcal{S}(k,\ldots,k)$ denoting the set of permutations on
$nk$ elements such that 
\begin{align*}
\sigma(1) & <\ldots<\sigma(k);\\
\sigma(k+1) & <\ldots<\sigma(2k);\\
 & \vdots\\
\sigma((n-1)k+1) & <\ldots<\sigma(nk).
\end{align*}
 Since the tensor norms are admissible, we have 
\begin{align*}
\|g^{k}\|^{n} & \leqslant\sum_{\sigma\in\mathcal{S}(k,\cdots,k)}\left\Vert \mathcal{P}^{\sigma}\left(g^{nk}\right)\right\Vert \\
 & =\frac{(nk)!}{(k!)^{n}}\|g^{nk}\|.
\end{align*}
In particular, $g^{nk}\neq0$ for all $n.$ 
\end{proof}
\begin{lem}
\label{lem: factorial lemma} Given $\alpha>0$, there exists a constant
$C>0$, such that 
\[
\frac{\left(\frac{n}{p}\right)!}{\left(\frac{n-\alpha}{p}\right)!}\leqslant Cn^{\frac{\alpha}{p}},\ \ \ \forall n>2\alpha,\ p\geqslant1.
\]
\end{lem}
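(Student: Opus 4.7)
Setting $x=n/p$ and $\beta=\alpha/p$ reshapes the claim into the cleaner inequality
\[
\frac{\Gamma(x+1)}{\Gamma(x+1-\beta)}\leqslant C(px)^{\beta}
\]
under the constraints $x>2\beta$ and $p\geqslant 1$, where I write $(y)!=\Gamma(y+1)$ throughout. The strategy is to peel off the integer part of $\beta$ by the functional equation $\Gamma(y+1)=y\Gamma(y)$, reducing the claim to a one-step comparison that falls to log-convexity of $\Gamma$, and then to absorb the result into the $(px)^{\beta}$ target by a short case analysis.

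Writing $\beta=k+\epsilon$ with $k=\lfloor\beta\rfloor$ and $\epsilon\in[0,1)$, the hypothesis $x>2\beta\geqslant 2k$ keeps every intermediate argument strictly positive, so $k$ iterations of the functional equation give
\[
\frac{\Gamma(x+1)}{\Gamma(x+1-\beta)}=x(x-1)\cdots(x-k+1)\cdot\frac{\Gamma(x+1-k)}{\Gamma(x+1-\beta)}.
\]
For the surviving ratio, whose offset $\epsilon$ lies in $[0,1)$, I would invoke log-convexity of $\log\Gamma$ on $(0,\infty)$: since $y+\epsilon=(1-\epsilon)y+\epsilon(y+1)$ and $\Gamma(y+1)=y\Gamma(y)$, one obtains the Wendel-type estimate $\Gamma(y+\epsilon)\leqslant y^{\epsilon}\Gamma(y)$; applied at $y=x+1-\beta$ this yields $\Gamma(x+1-k)/\Gamma(x+1-\beta)\leqslant(x+1-\beta)^{\epsilon}$. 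Combined with the crude bound $x(x-1)\cdots(x-k+1)\leqslant x^{k}$, I arrive at the intermediate inequality
\[
\frac{\Gamma(x+1)}{\Gamma(x+1-\beta)}\leqslant x^{k}(x+1-\beta)^{\epsilon}.
\]

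What remains is to show $x^{k}(x+1-\beta)^{\epsilon}\leqslant C(px)^{\beta}$ uniformly subject to $x>2\beta$, $p\geqslant 1$. I would split into three regimes. If $\beta\geqslant 1$, then $x+1-\beta\leqslant x$ collapses the right side to $x^{\beta}$, and $p^{\beta}\geqslant 1$ closes. If $\beta<1$ (so $k=0$) and $x\geqslant 1$, then $(x+1-\beta)^{\beta}\leqslant(2x)^{\beta}\leqslant 2(px)^{\beta}$ suffices. The delicate sub-case is $\beta<1$ with $x<1$, which occurs only when $p$ is large compared with $n$: the left side is then bounded by a universal constant $\leqslant 2$, while the target $n^{\alpha/p}$ may approach $1$ as $p\to\infty$. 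The closing observation is that $n>2\alpha$ together with $\beta\in(0,1]$ forces $n^{\beta}\geqslant(2\alpha)^{\beta}\geqslant\min(1,2\alpha)$, a positive constant depending only on $\alpha$, and the remaining $2$ is absorbed into $C\cdot n^{\beta}$. Collecting the three cases gives a workable constant such as $C=\max\{1,\,2/\min(1,2\alpha)\}$.

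The main obstacle, to the extent there is one, is precisely that last sub-case: one must avoid Stirling's formula (which degrades for small arguments) and instead exploit an elementary lower bound on $n^{\alpha/p}$ that is uniform in $p\geqslant 1$ on the prescribed range. Once that observation is in hand, the rest of the argument is just the factorial recursion plus a one-line log-convexity estimate.
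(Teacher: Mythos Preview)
Your proof is correct and takes a genuinely different route from the paper. The paper argues in one stroke via two-sided Stirling bounds $C_{1}\lambda^{\lambda+1/2}e^{-\lambda}\leqslant\lambda!\leqslant C_{2}\lambda^{\lambda+1/2}e^{-\lambda}$ applied to both factorials, then simplifies the ratio using $(1+\alpha/(n-\alpha))^{n-\alpha}\leqslant e^{\alpha}$ and the fact that $n>2\alpha$ makes $(1+\alpha/(n-\alpha))^{1/2}<\sqrt{2}$, arriving at $C=\sqrt{2}\,C_{2}e^{\alpha}/C_{1}$. Your argument instead peels off the integer part of $\beta=\alpha/p$ with the functional equation and handles the fractional remainder by the Wendel-type bound $\Gamma(y+\epsilon)\leqslant y^{\epsilon}\Gamma(y)$ from log-convexity, followed by an elementary case split. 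What this buys you is exactly the robustness you flag: the upper Stirling bound is not uniform as $\lambda\downarrow 0$, so the paper's argument tacitly needs $n/p$ and $(n-\alpha)/p$ bounded away from zero---which is guaranteed in the paper's sole application (there $\alpha>2p$, forcing $n/p>4$) but not by the lemma as literally stated. Your proof covers the full stated range at the cost of a slightly longer case analysis; the paper's proof is shorter but leans on an unstated restriction that happens to hold where the lemma is used.
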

\begin{proof}
According to Stirling's approximation, there exist constants $C_{1},C_{2}>0,$
such that 
\[
C_{1}\lambda^{\lambda+\frac{1}{2}}\mathrm{e}^{-\lambda}\leqslant\lambda!\leqslant C_{2}\lambda^{\lambda+\frac{1}{2}}\mathrm{e}^{-\lambda},\ \ \ \forall\lambda>0.
\]
Therefore, 
\begin{align*}
\frac{\left(\frac{n}{p}\right)!}{\left(\frac{n-\alpha}{p}\right)!} & \leqslant\frac{C_{2}\left(\frac{n}{p}\right)^{\frac{n}{p}+\frac{1}{2}}\mathrm{e}^{-\frac{n}{p}}}{C_{1}\left(\frac{n-\alpha}{p}\right)^{\frac{n-\alpha}{p}+\frac{1}{2}}\mathrm{e}^{-\frac{n-\alpha}{p}}}\\
 & =\frac{C_{2}}{C_{1}\left(p\mathrm{e}\right)^{\frac{\alpha}{p}}}\left(1+\frac{\alpha}{n-\alpha}\right)^{\frac{n-\alpha}{p}+\frac{1}{2}}n^{\frac{\alpha}{p}}\\
 & \leqslant\frac{\sqrt{2}C_{2}\mathrm{e}^{\alpha}}{C_{1}}n^{\frac{\alpha}{p}}.
\end{align*}
Choosing $C\triangleq\sqrt{2}C_{2}\mathrm{e}^{\alpha}/C_{1}$ suffices. 
\end{proof}
The following deterministic sub-additivity property is essential for
us.
\begin{prop}
\label{prop: sub-additivity}(Subadditivity estimate) Suppose that
$\mathbf{X}$ is a rough path, where the tensor products are equipped
with given admissible norms. Let $\mathbb{X}_{s,t}^{n}$ be the degree-$n$
iterated integrals of $\mathbf{X}$ on the interval $[s,t]$, as defined
by Theorem \ref{thm: Lyons' extension theorem}. Let $p\geqslant1$
be a given constant. Define 
\[
\widetilde{l}_{s,t}\triangleq\limsup_{n\rightarrow\infty}\left\Vert \left(\frac{n}{p}\right)!\mathbb{X}_{s,t}^{n}\right\Vert ^{\frac{p}{n}},\ \ s\leqslant t.
\]
Then $(s,t)\mapsto\widetilde{l}_{s,t}$ is sub-additive, i.e. 
\[
\widetilde{l}_{s,t}\leqslant\widetilde{l}_{s,u}+\widetilde{l}_{u,t}
\]
for $s\leqslant u\leqslant t.$ 
\end{prop}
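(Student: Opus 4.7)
The proof rests on Chen's multiplicative identity~(\ref{eq:multiplicative property}) combined with a neoclassical-type inequality. Fix $s \leq u \leq t$ and set $a \triangleq \widetilde{l}_{s,u}$, $b \triangleq \widetilde{l}_{u,t}$; we may assume both are finite, since otherwise the claim is trivial. By admissibility of the tensor norms together with~(\ref{eq:multiplicative property}), the first step is to record the deterministic estimate
\[
\bigl\|\mathbb{X}^n_{s,t}\bigr\| \;\leq\; \sum_{k=0}^n \bigl\|\mathbb{X}^k_{s,u}\bigr\|\,\bigl\|\mathbb{X}^{n-k}_{u,t}\bigr\|.
\]

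Given any $\varepsilon > 0$, the definition of $\widetilde{l}$ furnishes $N = N(\varepsilon)$ and $M = M(\varepsilon) \geq 1$ such that $\|\mathbb{X}^k_{s,u}\| \leq (a+\varepsilon)^{k/p}/(k/p)!$ and $\|\mathbb{X}^k_{u,t}\| \leq (b+\varepsilon)^{k/p}/(k/p)!$ for all $k \geq N$, while both norms are bounded by $M$ for $k < N$. I would then split the sum above into a \emph{bulk} part ($N \leq k \leq n-N$) and an \emph{edge} part ($k < N$ or $n-k < N$), which contains only finitely many indices.

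For the bulk, the key tool is Lyons' neoclassical inequality from \cite{Lyons98} (or its sharper form due to Hara--Hino), which gives a constant $C_p$ depending only on $p$ such that for all $\alpha,\beta \geq 0$,
\[
\sum_{k=0}^n \frac{\alpha^{k/p}\beta^{(n-k)/p}}{(k/p)!\,((n-k)/p)!} \;\leq\; C_p\,\frac{(\alpha+\beta)^{n/p}}{(n/p)!}.
\]
Applied with $\alpha = a+\varepsilon$ and $\beta = b+\varepsilon$, this yields
\[
(n/p)!\cdot(\text{bulk contribution}) \;\leq\; C_p\,(a+b+2\varepsilon)^{n/p}.
\]
For each of the finitely many edge terms I would invoke Lemma~\ref{lem: factorial lemma} to estimate $(n/p)!/((n-k)/p)! \leq C n^{k/p}$ for $k < N$, so that the edge contribution to $(n/p)!\|\mathbb{X}^n_{s,t}\|$ is bounded by a polynomial in $n$ of degree at most $N/p$ times $M(b+\varepsilon)^{(n-N)/p}$ (and symmetrically with roles swapped). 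Upon taking $p/n$-th roots and passing to $\limsup_{n \to \infty}$, the polynomial prefactors and the constant $C_p$ disappear, leaving
\[
\widetilde{l}_{s,t} \;\leq\; \max\bigl\{a+b+2\varepsilon,\;a+\varepsilon,\;b+\varepsilon\bigr\} \;=\; a+b+2\varepsilon.
\]
Sending $\varepsilon \downarrow 0$ then delivers the desired subadditivity.

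The main obstacle is the neoclassical inequality itself: if one wished to avoid quoting it, one would need a Laplace-method argument showing that the sum concentrates near $k^{\star} = \alpha n/(\alpha+\beta)$, where Stirling returns exactly the scaling $(\alpha+\beta)^{n/p}/(n/p)!$ up to polynomial corrections that are killed by the $n$-th root. Everything else in the argument is bookkeeping: the edge/bulk split, the uniform control on small-$k$ terms, and the absorption of prefactors in the $\limsup$.
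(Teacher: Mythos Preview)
Your proof is correct and follows essentially the same route as the paper's: Chen's identity, an edge/bulk split, the neoclassical inequality for the bulk, and Lemma~\ref{lem: factorial lemma} for the edges, with the prefactors killed in the $\limsup$. The only cosmetic difference is that the paper parametrizes the tail by the cutoff index $\alpha$ (setting $\widetilde{l}^{\alpha}_{s,u}=\sup_{k\geq\alpha}((k/p)!\|\mathbb{X}^k_{s,u}\|)^{p/k}$ and sending $\alpha\to\infty$) whereas you parametrize by $\varepsilon>0$; your formulation has the minor advantage that $a+\varepsilon,\,b+\varepsilon>0$ automatically, so you do not need the separate appeal to Lemma~\ref{lem: infinitely many non-zero components for a nontrivial group-like element} that the paper makes to ensure positivity in the limit computation.
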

\begin{proof}
We may assume that $\widetilde{l}_{s,u},\widetilde{l}_{u,t}$ are
both finite. Moreover, we may also assume that both of $\mathbb{X}_{s,u}$
and $\mathbb{X}_{u,t}$ are non-trivial, otherwise the desired inequality
is trivial due to the multiplicative property (\ref{eq:multiplicative property}).
From Lemma \ref{lem: infinitely many non-zero components for a nontrivial group-like element},
$\mathbb{X}_{s,u}$ and $\mathbb{X}_{u,t}$ have infinitely many non-zero
components.

Given integers $\alpha>2p$ and $n>2\alpha,$ according to the multiplicative
property (\ref{eq:multiplicative property}), we have 
\begin{align*}
\left\Vert \mathbb{X}_{s,t}^{n}\right\Vert  & =\left\Vert \sum_{k=0}^{n}\mathbb{X}_{s,u}^{k}\otimes\mathbb{X}_{u,t}^{n-k}\right\Vert \\
 & \leqslant\sum_{k=0}^{\alpha-1}\left\Vert \mathbb{X}_{s,u}^{n-k}\right\Vert \cdot\left\Vert \mathbb{X}_{u,t}^{k}\right\Vert +\sum_{k=n-\alpha+1}^{n}\left\Vert \mathbb{X}_{s,u}^{n-k}\right\Vert \cdot\left\Vert \mathbb{X}_{u,t}^{k}\right\Vert \\
 & \ \ \ +\sum_{k=\alpha}^{n-\alpha}\left\Vert \mathbb{X}_{s,u}^{n-k}\right\Vert \cdot\left\Vert \mathbb{X}_{u,t}^{k}\right\Vert .
\end{align*}
Define $(s,t)\mapsto\widetilde{l}_{s,t}^{\alpha}\triangleq\sup_{k\geqslant\alpha}\left\Vert (k/p)!\mathbb{X}_{s,t}^{k}\right\Vert ^{p/k}.$
It follows that 
\begin{align*}
\left\Vert \mathbb{X}_{s,t}^{n}\right\Vert  & \leqslant\sum_{k=0}^{\alpha-1}\frac{\left(\widetilde{l}_{s,u}^{\alpha}\right)^{\frac{n-k}{p}}}{\left(\frac{n-k}{p}\right)!}\cdot\left\Vert \mathbb{X}_{u,t}^{k}\right\Vert +\sum_{k=n-\alpha+1}^{n}\frac{\left(\widetilde{l}_{u,t}^{\alpha}\right)^{\frac{k}{p}}}{\left(\frac{k}{p}\right)!}\cdot\|\mathbb{X}_{s,u}^{n-k}\|\\
 & \ \ \ +\sum_{k=\alpha}^{n-\alpha}\frac{\left(\widetilde{l}_{s,u}^{\alpha}\right)^{\frac{n-k}{p}}}{\left(\frac{n-k}{p}\right)!}\cdot\frac{\left(\widetilde{l}_{u,t}^{\alpha}\right)^{\frac{k}{p}}}{\left(\frac{k}{p}\right)!}\\
 & \leqslant\sum_{k=0}^{\alpha-1}\frac{\left(\widetilde{l}_{s,u}^{\alpha}\right)^{\frac{n-k}{p}}}{\left(\frac{n-k}{p}\right)!}\cdot\left\Vert \mathbb{X}_{u,t}^{k}\right\Vert +\sum_{k=n-\alpha+1}^{n}\frac{\left(\widetilde{l}_{u,t}^{\alpha}\right)^{\frac{k}{p}}}{\left(\frac{k}{p}\right)!}\cdot\|\mathbb{X}_{s,u}^{n-k}\|\\
 & \ \ \ +p\frac{\left(\widetilde{l}_{s,u}^{\alpha}+\widetilde{l}_{u,t}^{\alpha}\right)^{\frac{n}{p}}}{\left(\frac{n}{p}\right)!},
\end{align*}
where in the final inequality we have used the neo-classical inequality
(c.f. \cite{HH10}), which states that

\[
\sum_{i=0}^{N}\frac{a^{\frac{i}{p}}b^{\frac{N-i}{p}}}{\left(\frac{i}{p}\right)!\left(\frac{N-i}{p}\right)!}\leqslant p\frac{(a+b)^{\frac{N}{p}}}{\left(\frac{N}{p}\right)!},\ \ \ \forall a,b\geqslant0,p\geqslant1,N\in\mathbb{N}.
\]
Multiplying through by $(\frac{n}{p})!$ and using Lemma \ref{lem: factorial lemma}
which states that $(\frac{n}{p})!\leq Cn^{\frac{\alpha}{p}}(\frac{n-\alpha}{p})!$,
\begin{align*}
 & \left(\frac{n}{p}\right)!\left\Vert \mathbb{X}_{s,t}^{n}\right\Vert \\
 & \leqslant\frac{\left(\frac{n}{p}\right)!}{\left(\frac{n-\alpha}{p}\right)!}\left(\sum_{k=0}^{\alpha-1}\left(\widetilde{l}_{s,u}^{\alpha}\right)^{\frac{n-k}{p}}\cdot\left\Vert \mathbb{X}_{u,t}^{k}\right\Vert +\sum_{k=n-\alpha+1}^{n}\left(\widetilde{l}_{u,t}^{\alpha}\right)^{\frac{k}{p}}\cdot\left\Vert \mathbb{X}_{s,u}\right\Vert ^{n-k}\right)\\
 & \ \ \ +p\left(\widetilde{l}_{s,u}^{\alpha}+\widetilde{l}_{u,t}^{\alpha}\right)^{\frac{n}{p}}\\
 & \leqslant Cn^{\frac{\alpha}{p}}\left(\sum_{k=0}^{\alpha-1}\left(\widetilde{l}_{s,u}^{\alpha}\right)^{\frac{n-k}{p}}\cdot\left\Vert \mathbb{X}_{u,t}^{k}\right\Vert +\sum_{k=n-\alpha+1}^{n}\left(\widetilde{l}_{u,t}^{\alpha}\right)^{\frac{k}{p}}\cdot\left\Vert \mathbb{X}_{s,u}\right\Vert ^{n-k}\right)\\
 & \ \ \ +p\left(\widetilde{l}_{s,u}^{\alpha}+\widetilde{l}_{u,t}^{\alpha}\right)^{\frac{n}{p}}.
\end{align*}
Therefore, 
\begin{equation}
\widetilde{l}_{s,t}=\limsup_{n\rightarrow\infty}\left(\left(\frac{n}{p}\right)!\left\Vert \mathbb{X}_{s,t}^{n}\right\Vert \right)^{\frac{p}{n}}\leqslant\widetilde{l}_{s,u}^{\alpha}+\widetilde{l}_{u,t}^{\alpha},\label{eq:subadditive pre-limit}
\end{equation}
where we have used the simple fact that 
\[
\lim_{n\rightarrow\infty}\left(\left(\lambda a^{\frac{n}{p}}+\mu b^{\frac{n}{p}}\right)n^{\nu}+(a+b)^{\frac{n}{p}}\right)^{\frac{p}{n}}=a+b
\]
for any $\lambda,\mu,\nu,a,b,p>0$ (note that, as discussed at the
beginning of the proof, $\widetilde{l}_{s,u}^{\alpha},\widetilde{l}_{u,t}^{\alpha}>0$).

Now the result follows from taking $\alpha\rightarrow\infty$ in (\ref{eq:subadditive pre-limit}). 
\end{proof}
\begin{rem}
Typically if $\mathbf{X}$ has finite $p$-variation, then from Lyons'
extension theorem we know that $\widetilde{l}_{s,t}$ is finite. 
\end{rem}
\begin{rem}
This is a side remark related to whether the reverse inequality in
Lemma \ref{prop: sub-additivity} holds. Note that Lemma \ref{prop: sub-additivity}
holds for all geometric rough paths $\mathbf{X}$ regardless of whether
$\mathbf{X}$ has a tree-like piece (a tree-like piece is a loop in
an $\mathbb{R}$-tree, see \cite{BGLY16} or \cite{Geng17} for the
precise definition). On the other hand, a \textit{super}additive estimate
of the form 
\begin{equation}
\widetilde{l}_{s,t}\geqslant\widetilde{l}_{s,u}+\widetilde{l}_{u,t},\label{eq:superadditive}
\end{equation}
if true at all, can at best only hold for tree-reduced paths, as inserting
tree-like pieces could make the right hand side of (\ref{eq:superadditive})
arbitrarily big while leaving the left hand side unchanged. 
\end{rem}
\begin{thm}
\label{thm: the limsup is a constant}Let the tensor products over
$\mathbb{R}^{d}$ be equipped with given admissible norms, under which
each element of the standard basis $\{\mathrm{e}_{1},\cdots,\mathrm{e}_{d}\}$
of $\mathbb{R}^{d}$ has norm one. Then for each $s<t$, $\widetilde{L}_{s,t}$
is almost surely a deterministic constant which is bounded above by
$d^{2}(t-s)$. 
\end{thm}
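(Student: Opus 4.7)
The upper bound $\widetilde{L}_{s,t}\leqslant d^{2}(t-s)$ is already contained in Proposition \ref{prop: upper bound}, so the genuine content of the theorem is that $\widetilde{L}_{s,t}$ is almost surely a deterministic constant. My plan is to combine the subadditivity estimate of Proposition \ref{prop: sub-additivity} with Brownian scaling, stationarity of increments, and Kingman's subadditive ergodic theorem.

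I begin by collecting three structural features of the random field $(s,t)\mapsto\widetilde{L}_{s,t}$. First, Proposition \ref{prop: sub-additivity} applied with $p=2$ and $\mathbf{X}=\mathbf{B}$ gives $\widetilde{L}_{s,t}\leqslant\widetilde{L}_{s,u}+\widetilde{L}_{u,t}$ for all $s\leqslant u\leqslant t$, almost surely. Second, Brownian scaling implies that $\widetilde{B}_{u}:=c^{-1/2}B_{cu}$ is a standard Brownian motion and its $n$-th iterated Stratonovich integral over $[0,t]$ equals $c^{-n/2}\mathbb{B}_{0,ct}^{n}$; consequently $\widetilde{L}_{0,ct}$ has the same distribution as $c\,\widetilde{L}_{0,t}$. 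Third, the translation invariance $B_{\cdot+h}-B_{h}\stackrel{d}{=}B_{\cdot}$ gives $\widetilde{L}_{s+h,t+h}\stackrel{d}{=}\widetilde{L}_{s,t}$.

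With these in hand I apply Kingman's subadditive ergodic theorem on the canonical Wiener space $\Omega=C([0,\infty),\mathbb{R}^{d})$ equipped with the time-one shift $T\omega:=\omega_{\cdot+1}-\omega_{1}$. Wiener measure is $T$-invariant and $T$-ergodic; ergodicity is a classical consequence of the independence of Brownian increments together with Kolmogorov's zero-one law, which forces all shift-invariant events to lie in a trivial tail $\sigma$-algebra. Setting $X_{n}:=\widetilde{L}_{0,n}$, the subadditivity and stationarity recorded above combine to give the Kingman hypothesis $X_{m+n}\leqslant X_{m}+X_{n}\circ T^{m}$, while Proposition \ref{prop: upper bound} yields $0\leqslant X_{1}\leqslant d^{2}$ a.s., so $X_{1}\in L^{1}$. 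Kingman's theorem therefore produces a \emph{deterministic} constant $\kappa_{d}\in[0,d^{2}]$ such that $X_{n}/n\to\kappa_{d}$ almost surely.

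To finish, the scaling property gives $X_{n}/n\stackrel{d}{=}\widetilde{L}_{0,1}$ for every $n$; since almost-sure convergence implies convergence in distribution, the distribution of $\widetilde{L}_{0,1}$ must coincide with $\delta_{\kappa_{d}}$, whence $\widetilde{L}_{0,1}=\kappa_{d}$ a.s. For a general interval, stationarity and scaling then force $\widetilde{L}_{s,t}\stackrel{d}{=}(t-s)\widetilde{L}_{0,1}=(t-s)\kappa_{d}$, and since a distribution concentrated at a point determines the random variable, $\widetilde{L}_{s,t}=(t-s)\kappa_{d}$ a.s. The only real obstacle is the verification that the Wiener shift is ergodic in the form required by Kingman's theorem; this is standard for processes with independent increments but should be flagged explicitly, and it is the step that quietly upgrades the subadditive bound of Proposition \ref{prop: sub-additivity} into the deterministic identity claimed by the theorem.
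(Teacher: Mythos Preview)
Your argument is correct. Both your proof and the paper's rest on the same three ingredients---subadditivity (Proposition~\ref{prop: sub-additivity}), Brownian scaling, and independence of increments---but they assemble them differently. The paper subdivides $[s,t]$ dyadically and applies the weak law of large numbers to the i.i.d.\ family $\{2^{m}\widetilde{L}_{t_{i-1}^{m},t_{i}^{m}}\}$, obtaining $\widetilde{L}_{s,t}\leqslant\mathbb{E}[\widetilde{L}_{s,t}]$ a.s.\ and hence equality. You instead invoke Kingman's subadditive ergodic theorem for the integer-time process $X_{n}=\widetilde{L}_{0,n}$ under the ergodic time-one shift, and then use the scaling identity $X_{n}/n\stackrel{d}{=}\widetilde{L}_{0,1}$ to force the a.s.\ limit to coincide with $\widetilde{L}_{0,1}$ itself. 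The paper's route is more elementary (no ergodic theorem, no ergodicity check), while yours places the result in a more conceptual subadditive-ergodic framework; either way the decisive step is the same scaling observation that each normalized summand already has the law of $\widetilde{L}_{0,1}$.
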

\begin{proof}
For $m\geqslant1$, consider the dyadic partition 
\[
t_{i}^{m}\triangleq s+\frac{i}{2^{m}}(t-s),\ \ \ i=0,\cdots,2^{m}.
\]
According to the subadditivity estimate, Proposition \ref{prop: sub-additivity},
we know that pathwisely 
\[
\widetilde{L}_{s,t}\leqslant\sum_{i=1}^{2^{m}}\widetilde{L}_{t_{i-1}^{m},t_{i}^{m}}=2^{-m}\sum_{i=1}^{2^{m}}2^{m}\widetilde{L}_{t_{i-1}^{m},t_{i}^{m}}.
\]
On the one hand, by the Brownian scaling, for each $i$, $2^{m}\widetilde{L}_{t_{i-1}^{m},t_{i}^{m}}$
has the same distribution as $\widetilde{L}_{s,t}.$ In particular,
by Proposition \ref{prop: upper bound}, it is bounded above by $d^{2}(t-s)$
almost surely. On the other hand, the family $\{2^{m}\widetilde{L}_{t_{i-1}^{m},t_{i}^{m}}:\ 1\leqslant i\leqslant2^{m}\}$
are independent. According to the weak law of large numbers, we conclude
that 
\[
2^{-m}\sum_{i=1}^{2^{m}}2^{m}\widetilde{L}_{t_{i-1}^{m},t_{i}^{m}}\rightarrow\mathbb{\mathbb{E}}\left[\widetilde{L}_{s,t}\right]
\]
in probability. By taking an almost surely convergent subsequence,
we obtain that 
\[
\widetilde{L}_{s,t}\leqslant\mathbb{E}\left[\widetilde{L}_{s,t}\right]
\]
almost surely. This certainly implies that $\widetilde{L}_{s,t}=\mathbb{E}\left[\widetilde{L}_{s,t}\right]$
almost surely. 
\end{proof}
\begin{rem}
Although the fact of $\widetilde{L}_{s,t}$ being a deterministic
constant is a result of independent increments for Brownian motion,
it is not clear that any simple type of $0$-$1$ law argument could
apply. 
\end{rem}
\begin{cor}
\label{cor: L =00003D00003D k t non-uniformly}Under the assumption
of Theorem \ref{thm: the limsup is a constant}, there exists a constant
$\kappa_{d}$ depending on $d$, such that for each pair of $s<t,$
with probability one we have $\widetilde{L}_{s,t}=\kappa_{d}(t-s).$ 
\end{cor}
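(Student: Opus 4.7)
My plan is to deduce the corollary from Theorem \ref{thm: the limsup is a constant} via Brownian scaling. Theorem \ref{thm: the limsup is a constant} already establishes that for each fixed pair $s<t$, $\widetilde L_{s,t}$ equals almost surely some deterministic constant, say $\kappa(s,t)$; the remaining task is to show that $\kappa(s,t)=\kappa_d\cdot(t-s)$ for a single constant $\kappa_d$ depending only on $d$ (and the chosen admissible norms).

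The key ingredient is Brownian scaling. For fixed $s<t$, set $\tilde B_r := (t-s)^{-1/2}(B_{s+r(t-s)}-B_s)$ for $r\in[0,1]$; this is again a standard $d$-dimensional Brownian motion. A change of variables in the iterated Stratonovich integrals delivers the pathwise identity
\[
\mathbb{B}_{s,t}^{n} \;=\; (t-s)^{n/2}\,\tilde{\mathbb{B}}_{0,1}^{n},
\]
so that applying $((n/2)!\,\|\cdot\|)^{2/n}$ to both sides and sending $n\to\infty$ absorbs the scale factor as exactly $(t-s)^{(n/2)\cdot(2/n)}=t-s$. This gives
\[
\widetilde L_{s,t} \;=\; (t-s)\,\widetilde L^{\tilde B}_{0,1},
\]
where $\widetilde L^{\tilde B}_{0,1}$ denotes the same limsup computed from the signature of $\tilde B$ on $[0,1]$.

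I then define $\kappa_d$ to be the almost-sure value of $\widetilde L_{0,1}$ provided by Theorem \ref{thm: the limsup is a constant} applied to the pair $(0,1)$. Since $\tilde B$ is equal in law to the standard Brownian motion on $[0,1]$, $\widetilde L^{\tilde B}_{0,1}$ has the same distribution as $\widetilde L_{0,1}$, which is the Dirac mass at $\kappa_d$; hence $\widetilde L^{\tilde B}_{0,1}=\kappa_d$ almost surely, and the scaling identity yields $\widetilde L_{s,t}=\kappa_d(t-s)$ almost surely for the given fixed pair $(s,t)$.

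There is no serious obstacle: once Theorem \ref{thm: the limsup is a constant} has furnished a deterministic almost-sure value on a single interval, scaling transports it to every interval, and the linearity in $t-s$ is forced by the matched factorial exponent in the definition of $\widetilde L$. The only minor subtlety worth flagging is that the exceptional null set depends on $(s,t)$, so the proof delivers the statement ``for each fixed $s<t$'' rather than ``almost surely simultaneously for all $s<t$''; the latter strengthening would need continuity estimates that lie outside the scope of this corollary.
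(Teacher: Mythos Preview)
Your proof is correct and follows essentially the same approach as the paper: the paper's own proof simply says that the result follows immediately from Theorem~\ref{thm: the limsup is a constant} and Brownian scaling, and you have spelled out precisely that scaling argument. Your observation that the null set depends on $(s,t)$ is also apt, and the paper handles the uniform-in-$(s,t)$ strengthening separately in Proposition~\ref{prop: universality of null set}.
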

\begin{proof}
The result follows immediately from Theorem \ref{thm: the limsup is a constant}
and Brownian scaling. 
\end{proof}
\begin{rem}
We should emphasize that the constant $\kappa_{d}$ depends on the
choice of given admissible norms on the tensor products. 
\end{rem}
We can further show that the $\mathbb{P}$-null set arising from Corollary
\ref{cor: L =00003D00003D k t non-uniformly} associated with each
pair of $s<t$ can be chosen to be universal. This point will be very
useful for applications to the level of the Brownian rough path (c.f.
Section 6 below).
\begin{prop}
\label{prop: universality of null set}With probability one, we have
\[
\widetilde{L}_{s,t}=\kappa_{d}(t-s)\ \ \ \mathrm{for}\ \mathrm{all}\ s<t.
\]
\end{prop}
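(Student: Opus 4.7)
The plan is to upgrade the pair-by-pair identity of Corollary \ref{cor: L =00003D00003D k t non-uniformly} to a single universal null set by a rational squeeze argument, combining the subadditivity estimate from Proposition \ref{prop: sub-additivity} with the endpoint-supremum upper bound from Proposition \ref{prop: upper bound}.

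First I would define a single full-measure event $\Omega_0$ as the countable intersection, over all rational pairs $p<q$, of the full-measure events on which both $\widetilde{L}_{p,q}=\kappa_d(q-p)$ and the two supremum-in-endpoint estimates of Proposition \ref{prop: upper bound} hold on $[p,q]$. Only countably many pairs are involved, so $\mathbb{P}(\Omega_0)=1$. The key observation, valid on $\Omega_0$, is that for any rational $p$ and any real $s>p$ (possibly irrational), one can sandwich $s$ by an arbitrary rational $q>s$ and use the trivial bound
\[
\|\mathbb{B}_{p,s}^{n}\|\leqslant\sup_{p\leqslant u\leqslant q}\|\mathbb{B}_{p,u}^{n}\|
\]
together with Proposition \ref{prop: upper bound} to conclude $\widetilde{L}_{p,s}\leqslant d^{2}(q-p)$; taking the infimum over rational $q\downarrow s$ yields the clean bound $\widetilde{L}_{p,s}\leqslant d^{2}(s-p)$. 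Symmetrically, for rational $a>s$ one obtains $\widetilde{L}_{s,a}\leqslant d^{2}(a-s)$ from the other half of Proposition \ref{prop: upper bound}.

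With this endpoint control in hand, fix arbitrary $s<t$ and choose rationals $p_k\uparrow s$ and $q_k\downarrow t$. Subadditivity gives
\[
\kappa_d(q_k-p_k)=\widetilde{L}_{p_k,q_k}\leqslant\widetilde{L}_{p_k,s}+\widetilde{L}_{s,t}+\widetilde{L}_{t,q_k}\leqslant d^{2}(s-p_k)+\widetilde{L}_{s,t}+d^{2}(q_k-t),
\]
and letting $k\to\infty$ yields the lower bound $\widetilde{L}_{s,t}\geqslant\kappa_d(t-s)$. For the matching upper bound, pick rationals $a_k\downarrow s$ and $b_k\uparrow t$ with $s<a_k<b_k<t$ eventually, and apply subadditivity to the partition $s<a_k<b_k<t$: the middle term $\widetilde{L}_{a_k,b_k}=\kappa_d(b_k-a_k)$ tends to $\kappa_d(t-s)$, while the boundary terms $\widetilde{L}_{s,a_k}$ and $\widetilde{L}_{b_k,t}$ vanish by the key observation above.

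I expect the main conceptual point to be recognizing that the full-measure event $\Omega_0$ must incorporate both conclusions of Proposition \ref{prop: upper bound}, and not merely the pointwise identity from Corollary \ref{cor: L =00003D00003D k t non-uniformly}: it is precisely the supremum-over-endpoint control that permits passage from rational to irrational endpoints while only sacrificing countably many null sets. Once $\Omega_0$ is set up, the remainder is an elementary squeeze and no further probabilistic input is needed.
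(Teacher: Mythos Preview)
Your proposal is correct and follows essentially the same approach as the paper: fix a single full-measure event on which Corollary~\ref{cor: L =00003D00003D k t non-uniformly} and both supremum-in-endpoint estimates from Proposition~\ref{prop: upper bound} hold for every rational pair, then run a rational squeeze using subadditivity. The only structural difference is that the paper proceeds in two stages (first pass from rational $r_1<r_2$ to real $s$ and rational $r$, then repeat for the second endpoint), whereas you handle both endpoints at once via a three-term subadditivity split; the underlying mechanism---controlling $\widetilde{L}$ at a possibly irrational endpoint by the $L'$ or $L''$ supremum quantity on a surrounding rational interval---is identical.
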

\begin{proof}
According to Proposition \ref{prop: upper bound} and Corollary \ref{cor: L =00003D00003D k t non-uniformly},
there exists a $\mathbb{P}$-null set $\mathcal{N}$, such that for
all $\omega\notin\mathcal{N},$ we have 
\[
\max\left\{ L_{r_{1},r_{2}}'(\omega),\ L''_{r_{1},r_{2}}(\omega)\right\} \leqslant d^{2}(r_{2}-r_{1})
\]
and 
\[
\widetilde{L}_{r_{1},r_{2}}(\omega)=\kappa_{d}(r_{2}-r_{1})
\]
for all $r_{1},r_{2}\in\mathbb{Q}$ with $r_{1}<r_{2},$ where 
\begin{align*}
L'_{r_{1},r_{2}} & \triangleq\limsup_{n\rightarrow\infty}\left(\left(\frac{n}{2}\right)!\sup_{r_{1}\leqslant u\leqslant r_{2}}\left\Vert \mathbb{B}_{r_{1},u}^{n}\right\Vert \right)^{\frac{2}{n}},\\
L''_{r_{1},r_{2}} & \triangleq\limsup_{n\rightarrow\infty}\left(\left(\frac{n}{2}\right)!\sup_{r_{1}\leqslant u\leqslant r_{2}}\left\Vert \mathbb{B}_{u,r_{2}}^{n}\right\Vert \right)^{\frac{2}{n}}.
\end{align*}

Now fix $\omega\notin\mathcal{N}$ and let $s<r$ with $r\in\mathbb{Q}.$
For arbitrary $r_{1},r_{2}\in\mathbb{Q}$ with $r_{1}<s<r_{2},$ we
know that 
\begin{align*}
\kappa_{d}(r-r_{1}) & =\widetilde{L}_{r_{1},r}(\omega)\\
 & \leqslant\widetilde{L}_{r_{1},s}(\omega)+\widetilde{L}_{s,r}(\omega)\;\text{(by subadditivity estimate, Prop. \ref{prop: sub-additivity})}\\
 & \leqslant L'_{r_{1},r_{2}}(\omega)+\widetilde{L}_{s,r}(\omega)\\
 & \leqslant d^{2}(r_{2}-r_{1})+\widetilde{L}_{s,r}(\omega)\;\text{(by Prop. \ref{sec:upper bound})}
\end{align*}
By letting $r_{1}\uparrow s$ and $r_{2}\downarrow s$ along rational
times, we obtain that 
\[
\widetilde{L}_{s,r}(\omega)\geqslant\kappa_{d}(r-s).
\]
Similarly, from 
\begin{align*}
\widetilde{L}_{s,r}(\omega) & \leqslant\widetilde{L}_{s,r_{2}}(\omega)+\widetilde{L}_{r_{2},r}(\omega)\;\text{(by subadditivity estimate, Prop. \ref{prop: sub-additivity})}\\
 & \leqslant L''_{r_{1},r_{2}}(\omega)+\widetilde{L}_{r_{2},r}(\omega)\\
 & \leqslant d^{2}(r_{2}-r_{1})+\kappa_{d}(r-r_{2}),\;\text{(by Prop. \ref{sec:upper bound})}
\end{align*}
we conclude that 
\[
\widetilde{L}_{s,r}(\omega)\leqslant\kappa_{d}(r-s).
\]
Therefore, 
\[
\widetilde{L}_{s,r}(\omega)=\kappa_{d}(r-s).
\]
By repeating the same argument to the parameter $r$, we conclude
that for all $s<t,$ 
\[
\widetilde{L}_{s,t}(\omega)=\kappa_{d}(t-s).
\]
\end{proof}

\section{The second part of the main result: the lower estimate}

For given admissible tensor norms, from the last section we know that
with probability one, 
\[
\widetilde{L}_{s,t}=\kappa_{d}(t-s),\ \ 
\]
where $\kappa_{d}$ is a deterministic constant depending only on
the dimension $d$ of Brownian motion, which is bounded above by $d^{2}.$
It is not even clear that $\kappa_{d}$ should be strictly positive.
In this section, we are going to establish a lower estimate of $\kappa_{d}$
under the projective tensor norm by applying the technique of hyperbolic
development which was introduced by Hambly and Lyons' paper \cite{HL10}.
In the next section, we shall see that the positivity of $\kappa_{d}$
reflects certain non-degeneracy properties of the Brownian rough path.

\subsection{The hyperbolic development of a regular path}

Before studying the Brownian signature, let us first summarize the
fundamental idea of hyperbolic development in the deterministic context
for regular paths. We present proofs of a few results which seems
not appearing in the literature. For an expository review on hyperbolic
geometry, we refer the reader to the wonderful survey \cite{CFKP97}.

Let $\mathbb{H}^{d}$ $(d\geqslant2)$ be the complete, connected
and simply-connected $d$-dimensional Riemannian manifold with constant
sectional curvature $-1.$ For computational convenience, we choose
the hyperboloid model. In particular, $\mathbb{H}^{d}$ is defined
to be the submanifold $\{x\in\mathbb{R}^{d+1}:\ x*x=-1,\ x^{d+1}>0\},$
where $*$ is the Minkowski metric on $\mathbb{R}^{d+1}$ given by
\[
x*y\triangleq\sum_{i=1}^{d}x^{i}y^{i}-x^{d+1}y^{d+1}.
\]
The Minkowski metric induces a Riemannian metric on $\mathbb{H}^{d}$
which gives it the desired hyperbolic structure. For $x,y\in\mathbb{H}^{d},$
one can show that 
\begin{equation}
\cosh\rho(x,y)=-x*y,\label{eq: relationship between Lorenz metric and hyperbolic metric}
\end{equation}
where $\rho(x,y)$ is the hyperbolic distance between $x$ and $y.$

It is known that the isometry group $\mathrm{SO}(d,1)$ of $\mathbb{H}^{d}$
is the space of $(d+1)\times(d+1)$-invertible matrices $\Gamma$
such that $\Gamma^{-1}=J\Gamma^{*}J$ and $\Gamma_{d+1}^{d+1}>0,$
where $J\triangleq\mathrm{diag}(1,\cdots,1,-1).$ The Lie algebra
$\mathrm{so}(d,1)$ of $\mathrm{SO}(d,1)$ is the space of $(d+1)\times(d+1)$-matrices
$A$ of the form 
\[
A=\left(\begin{array}{cc}
A_{0} & b\\
b^{*} & 0
\end{array}\right)
\]
where $A_{0}$ is an antisymmetric $d\times d$-matrix and $b\in\mathbb{R}^{d}.$

Define a linear map $F:\ \mathbb{R}^{d}\rightarrow\mathrm{so}(d,1)$
by 
\[
F(x)\triangleq\left(\begin{array}{cccc}
0 & \cdots & 0 & x^{1}\\
\vdots & \ddots & \vdots & \vdots\\
0 & \cdots & 0 & x^{d}\\
x^{1} & \cdots & x^{d} & 0
\end{array}\right),\ \ \ x=(x^{1},\cdots,x^{d})\in\mathbb{R}^{d}.
\]
Given a continuous path $\gamma:\ [0,1]\rightarrow\mathbb{R}^{d}$
with bounded variation, consider the linear ordinary differential
equation 
\[
\begin{cases}
d\Gamma_{t}=\Gamma_{t}F(\mathrm{d}\gamma_{t}), & t\in[0,1],\\
\Gamma_{0}=\mathrm{I}_{d+1}.
\end{cases}
\]
The solution $\Gamma_{t}$ defines a continuous path with bounded
variation in the isometry group $\mathrm{SO}(d,1).$ Explicitly, by
Picard's iteration, we see that 
\begin{equation}
\Gamma_{t}=\sum_{n=0}^{\infty}\int_{0<t_{1}<\cdots<t_{n}<t}F(\mathrm{d}\gamma_{t_{1}})\cdots F(\mathrm{d}\gamma_{t_{n}})=\sum_{n=0}^{\infty}F^{\otimes n}\left(g_{n}(t)\right),\label{eq: explicit formula for Gamma}
\end{equation}
where we recall that $g_{n}(t)\triangleq\int_{0<t_{1}<\ldots<t_{n}<t}\mathrm{d}\gamma_{t_{1}}\otimes\ldots\otimes\mathrm{d}\gamma_{t_{n}}.$Define
$X_{t}\triangleq\Gamma_{t}o,$ where $o=(0,\cdots,0,1)^{*}\in\mathbb{H}^{d}.$
\begin{defn}
$\Gamma_{t}$ is called the \textit{Cartan development} of $\gamma_{t}$
onto $\mathrm{SO}(d,1).$ $X_{t}$ is called the \textit{hyperbolic
development} of $\gamma_{t}$ onto $\mathbb{H}^{d}.$ 
\end{defn}
The reason of expecting a lower estimate of $\kappa_{d}$ in our Brownian
setting from the hyperbolic development is quite related to the philosophy
in the setting of bounded variation paths. To be precise, define 
\begin{equation}
\widetilde{l}\triangleq\sup_{n\geqslant1}\left(n!\|g_{n}\|_{\mathrm{proj}}\right)^{\frac{1}{n}}\leqslant\|\gamma\|_{1\mathrm{-var}},\label{eq: definition of sup in the BV case}
\end{equation}
where $g_{n}\triangleq\int_{0<t_{1}<\cdots<t_{n}<1}\mathrm{d}\gamma_{t_{1}}\otimes\cdots\otimes\mathrm{d}\gamma_{t_{n}}$
is the $n$-th component of the signature of $\gamma,$ and $\|\cdot\|_{\mathrm{proj}}$
is the projective tensor norm induced by the Euclidean norm on $\mathbb{R}^{d}.$ 

Now suppose that $\gamma$ is tree-reduced. There are essentially
two cases in which the length conjecture $\widetilde{l}=\|\gamma\|_{1-\mathrm{var}}$
is known to be true: piecewise linear paths or $C^{1}$-paths in constant
speed parametrization (\cite{HL10,LX15}).

The fundamental reason that the hyperbolic development yields the
lower bound $\widetilde{l}\geqslant\|\gamma\|_{1-\mathrm{var}}$ is
hidden in the following two key facts.

\textbf{Fact 1.} The hyperbolic development is length preserving.
Moreover, if $\gamma_{t}$ is piecewise linear, then its hyperbolic
development $X_{t}$ is piecewise geodesic with the same intersection
angles as those of $\gamma_{t}$. 
\begin{proof}
We first show that the Cartan development is length preserving.

If $\gamma_{t}$ is smooth, then the equation for $\Gamma_{t}$ becomes
\[
\stackrel{\cdot}{\Gamma}_{t}=\Gamma_{t}F(\stackrel{\cdot}{\gamma}_{t}),
\]
and thus 
\[
\stackrel{\cdot}{X_{t}}=\stackrel{\cdot}{\Gamma_{t}}o=\Gamma_{t}\left(\begin{array}{c}
\stackrel{\cdot}{\gamma_{t}}\\
0
\end{array}\right).
\]
Since $\Gamma_{t}$ is an isometry of $\mathbb{H}^{d},$ by identifying
$T_{o}\mathbb{H}^{d}\cong\mathbb{R}^{d},$ we conclude that 
\[
\|\stackrel{\cdot}{X_{t}}\|_{*}=\left\Vert \left(\begin{array}{c}
\stackrel{\cdot}{\gamma}_{t}\\
0
\end{array}\right)\right\Vert _{*}=\|\stackrel{\cdot}{\gamma_{t}}\|_{\mathrm{Euclidean}},
\]
where we define \textit{
\[
\left\Vert \left(\begin{array}{c}
x_{1}\\
\vdots\\
x_{d+1}
\end{array}\right)\right\Vert _{*}\triangleq\sqrt{\left(\begin{array}{c}
x_{1}\\
\vdots\\
x_{d+1}
\end{array}\right)*\left(\begin{array}{c}
x_{1}\\
\vdots\\
x_{d+1}
\end{array}\right)}=\sqrt{\sum_{i=1}^{d}x_{i}^{2}-x_{d+1}^{2}}.
\]
}It follows that the hyperbolic length of $X_{t}$ is the same as
the Euclidean length of $\gamma_{t}$. The general bounded variation
case can be proved by smooth approximation.

Next we show that the Cartan development of a piecewise linear path
is a piecewise geodesic with the same intersection angles.

If $\gamma_{t}=tv$ is a linear path, it can be shown using (\ref{eq: explicit formula for Gamma})
that 
\begin{equation}
X_{1}^{d+1}=(\Gamma_{1}o)^{d+1}=\sum_{n=0}^{\infty}\frac{\|v\|_{\mathrm{Euclidean}}^{2n}}{(2n)!}=\cosh\left\Vert v\right\Vert _{\mathrm{Euclidean}}.\label{eq:equality}
\end{equation}
(As this equality is given for motivation only, we will not give a
proof.) From the identity (\ref{eq: relationship between Lorenz metric and hyperbolic metric}),
we know that 
\[
\cosh\rho(X_{1},o)=-X_{1}*o=X_{1}^{d+1}=\cosh\|v\|_{\mathrm{Euclidean}},
\]
which implies that 
\[
\rho(X_{1},o)=\|v\|_{\mathrm{Euclidean}}=\|\gamma\|_{1-\mathrm{var}}.
\]
Therefore, $X$ is a geodesic in $\mathbb{H}_{d}.$

Now suppose that $\gamma_{t}$ is piecewise linear over a partition
$\mathcal{P}:\ 0=t_{0}<t_{1}<\cdots<t_{n+1}=1,$ where $\stackrel{\cdot}{\gamma}_{t}=v_{k}\in\mathbb{R}^{d}$
for $t\in[t_{k-1},t_{k}].$ Apparently, the Cartan development $X_{t}$
of $\gamma_{t}$ is a piecewise geodesic. Given $1\leqslant k\leqslant n,$
we have 
\[
\stackrel{\cdot}{X}_{t_{k}-}=\Gamma_{t_{k-1}}\Gamma_{t_{k-1}}^{-1}\Gamma_{t_{k}}\left(\begin{array}{c}
v_{k}\\
0
\end{array}\right)=\Gamma_{t_{k}}\left(\begin{array}{c}
v_{k}\\
0
\end{array}\right),
\]
and 
\[
\stackrel{\cdot}{X}_{t_{k}+}=\Gamma_{t_{k}}\left(\begin{array}{c}
v_{k+1}\\
0
\end{array}\right).
\]
Therefore, 
\[
\langle v_{k},v_{k+1}\rangle_{\mathrm{Euclidean}}=\left\langle \left(\begin{array}{c}
v_{k}\\
0
\end{array}\right),\left(\begin{array}{c}
v_{k+1}\\
0
\end{array}\right)\right\rangle _{*}=\left\langle \stackrel{\cdot}{X}_{t_{k}-},\stackrel{\cdot}{X}_{t_{k}+}\right\rangle _{*},
\]
where the second equality uses that $\Gamma_{t_{k}}$ is an isometry
with respect to $\left\langle \cdot,\cdot\right\rangle _{*}$ and
we define \textit{
\[
\left\langle \left(\begin{array}{c}
x_{1}\\
\vdots\\
x_{d+1}
\end{array}\right),\left(\begin{array}{c}
y_{1}\\
\vdots\\
y_{d+1}
\end{array}\right)\right\rangle _{*}\triangleq\left(\begin{array}{c}
x_{1}\\
\vdots\\
x_{d+1}
\end{array}\right)*\left(\begin{array}{c}
y_{1}\\
\vdots\\
y_{d+1}
\end{array}\right)=\sum_{i=1}^{d}x_{i}y_{i}-x_{d+1}y_{d+1}.
\]
}This implies that the Cartan development preserves intersection angles. 
\end{proof}
\textbf{Fact 2.} In a hyperbolic triangle with edges $a,b,c>0,$ we
have $a\geqslant b+c-\log\frac{2}{1-\cos\theta_{A}}$, where $\theta_{A}$
is the angle opposite $a.$
\begin{proof}
The only point which requires some attention is the following fact:
for $\lambda>0,$ if we consider triangles with the same angle $\theta_{A}$
(its opposite edge being denoted by $a(\lambda)$), and $\lambda b,\lambda c$
being the other two edges, then 
\[
f(\lambda)\triangleq\lambda b+\lambda c-a(\lambda)
\]
is monotonely increasing in $\lambda$. Based on this fact, one finds
the upper bound of $b+c-a$ to be $\lim_{\lambda\rightarrow\infty}(\lambda b+\lambda c-a(\lambda))$,
which can be computed by using the hyperbolic cosine law (c.f. Proof
of Lemma 3.4 in \cite{HL10})

To this end, it suffices to show that $f'(\lambda)=b+c-a'(\lambda)\geqslant0.$
By the first hyperbolic cosine law, we have 
\begin{equation}
\cosh a(\lambda)=\cosh\lambda b\cosh\lambda c-\sinh\lambda b\sinh\lambda c\cos\theta_{A}.\label{eq:hyperbolic cosine law}
\end{equation}
Differentiating with respect to $\lambda,$ we obtain that 
\begin{align*}
a'(\lambda)\sinh a(\lambda) & =b\left(\sinh\lambda b\cosh\lambda c-r\cosh\lambda b\sinh\lambda c\right)\\
 & \ \ \ +c\left(\cosh\lambda b\sinh\lambda c-r\sinh\lambda b\cosh\lambda c\right)
\end{align*}
where $r\triangleq\cos\theta_{A}.$ For simplicity we write $\sinh=\mathrm{sh},$
$\cosh=\mathrm{ch.}$ Now it suffices to show that 
\[
b(\mathrm{sh}\lambda b\cdot\mathrm{ch}\lambda c-r\mathrm{ch}\lambda b\cdot\mathrm{sh}\lambda c)+c(\mathrm{ch}\lambda b\cdot\mathrm{sh}\lambda c-r\mathrm{sh}\lambda b\cdot\mathrm{ch}\lambda c)\leqslant(b+c)\mathrm{sh}a(\lambda).
\]

We use $X$, $Y$ to denote the left and right hand sides respectively.
From direct computation, we see that 
\begin{align*}
X^{2} & =(b-cr)^{2}\mathrm{sh}^{2}\lambda b\cdot\mathrm{ch}^{2}\lambda c+(c-br)^{2}\mathrm{ch}^{2}\lambda b\cdot\mathrm{sh}^{2}\lambda c\\
 & \ \ \ +(2bc+2bcr^{2}-2b^{2}r-2c^{2}r)\mathrm{sh}\lambda b\cdot\mathrm{ch}\lambda b\cdot\mathrm{sh}\lambda c\cdot\mathrm{ch}\lambda c,
\end{align*}
and by the hyperbolic cosine law (\ref{eq:hyperbolic cosine law}),
\begin{align*}
Y^{2} & =(b+c)^{2}((1+r^{2})\mathrm{sh}^{2}\lambda b\cdot\mathrm{sh}^{2}\lambda c+\mathrm{sh}^{2}\lambda b+\mathrm{sh}^{2}\lambda c\\
 & \ \ \ -2r\mathrm{sh}\lambda b\cdot\mathrm{ch}\lambda b\cdot\mathrm{sh}\lambda c\cdot\mathrm{ch}\lambda c).
\end{align*}
By using $\cosh^{2}x-\sinh^{2}x=1,$ we obtain that 
\begin{align*}
\frac{Y^{2}-X^{2}}{1+r} & =2bc(1+r)\mathrm{sh}^{2}\lambda b\cdot\mathrm{sh}^{2}\lambda c-2bc(1+r)\mathrm{sh}\lambda b\cdot\mathrm{ch}\lambda b\cdot\mathrm{sh}\lambda c\cdot\mathrm{ch}\lambda c\\
 & \ \ \ +(c^{2}(1-r)+2bc)\mathrm{sh}^{2}\lambda b+(b^{2}(1-r)+2bc)\mathrm{sh}^{2}\lambda c.
\end{align*}

Define $g(r)$ to be the function in $r$ given by the right hand
side of the above equality. Then 
\[
g(1)=2bc(\mathrm{sh}\lambda b\cdot\mathrm{ch}\lambda c-\mathrm{ch}\lambda b\cdot\mathrm{sh}\lambda c)^{2}\geqslant0.
\]
Moreover, 
\begin{align*}
g'(r) & =-2bc\mathrm{sh}\lambda b\cdot\mathrm{sh}\lambda c\cdot\mathrm{ch}\lambda(b-c)\\
 & \ \ \ -c^{2}\mathrm{sh}^{2}\lambda b-b^{2}\mathrm{sh}^{2}\lambda c\\
 & \leqslant0,
\end{align*}
where the inequality in the final line follows by using $\mathrm{ch}\lambda(b-c)\geqslant1$
and completing the square. Therefore, $g(r)\geqslant0$ for $r\in[-1,1],$
which implies that $Y^{2}\geqslant X^{2}.$ Since $Y\geqslant0,$
we conclude that $Y\geqslant X.$ 
\end{proof}
Let $\gamma:\ [0,1]\rightarrow\mathbb{R}^{d}$ be a tree-reduced bounded
variation path. From (\ref{eq: relationship between Lorenz metric and hyperbolic metric})
and the explicit formula for the Cartan development, it can be shown
that (see also (\ref{eq: formula for hyperbolic height}) below),
for each $\lambda>0,$ 
\begin{align*}
\cosh\rho(X_{1}^{\lambda},o) & =\sum_{n=0}^{\infty}\lambda^{2n}\int_{0<t_{1}<\cdots<t_{2n}<1}\langle\mathrm{d}\gamma_{t_{1}},\mathrm{d}\gamma_{t_{2}}\rangle\cdots\langle\mathrm{d}\gamma_{t_{2n-1}},\mathrm{d}\gamma_{t_{2n}}\rangle\\
 & \leqslant\sum_{n=0}^{\infty}\lambda^{2n}\left\Vert \int_{0<t_{1}<\cdots<t_{2n}<1}\mathrm{d}\gamma_{t_{1}}\otimes\cdots\otimes\mathrm{d}\gamma_{t_{2n}}\right\Vert _{\text{proj}}\quad(\text{see}\;(\ref{eq:inner product dominated by projective norm})\;\text{below})\\
 & \leqslant\cosh\lambda\widetilde{l}.
\end{align*}
where $\widetilde{l}$ is defined by (\ref{eq: definition of sup in the BV case})
as the supremum of the (normalized) iterated integrals, $\gamma_{t}^{\lambda}\triangleq\lambda\gamma_{t}$
($0\leqslant t\leqslant1$) is the path obtained by rescaling $\gamma$
by the factor $\lambda,$ and $X_{t}^{\lambda}$ is the hyperbolic
development of $\gamma_{t}^{\lambda}.$ In particular, we see that
$\lambda\widetilde{l}\geqslant\rho(X_{1}^{\lambda},o).$

The previous Fact 2 tells us that for all two-edge piecewise geodesic
paths $Y:\ [0,1]\rightarrow\mathbb{H}^{d}$ with fixed intersection
angle $0<\theta<\pi$, the distance between hyperbolic length of $Y$
and $\rho(Y_{1},o)$ is uniformly bounded by a constant depending
on $\theta.$ Now suppose that $\gamma:\ [0,1]\rightarrow\mathbb{R}^{d}$
is a two-edge piecewise linear path with intersection angle $0<\theta<\pi.$
Fact 1 and 2 together implies that 
\[
0\leqslant\lambda\|\gamma\|_{1-\mathrm{var}}-\rho\left(X_{1}^{\lambda},o\right)\leqslant K(\theta)\triangleq\log\frac{2}{1-\cos\theta},
\]
uniformly in $\lambda>0.$ In particular, 
\begin{equation}
\lim_{\lambda\rightarrow\infty}\frac{\rho\left(X_{1}^{\lambda},o\right)}{\lambda}=\|\gamma\|_{1-\mathrm{var}},\label{eq: limit of hyperbolic distance}
\end{equation}
from which we obtain the desired estimate $\widetilde{l}\geqslant\|\gamma\|_{1-\mathrm{var}}.$
It is important to note that the angle $\theta$ captures the tree-reduced
nature of $\gamma$ in this simple case. Indeed, if $\theta=0,$ $K(\theta)=+\infty.$

With some effort, the previous argument extends to tree-reduced piecewise
linear paths with minimal intersection angle given by $\theta>0.$
In this case, one can obtain an estimate of the form 
\[
0\leqslant\lambda\|\gamma\|_{1-\mathrm{var}}-\rho\left(X_{1}^{\lambda},o\right)\leqslant N\cdot\Lambda(\theta)
\]
uniformly in $\lambda>0,$ where $N$ is the number of edges of $\gamma$
and $\Lambda(\theta)$ is a constant depending only $\theta$ (which
explodes as $\theta\downarrow0$). We again obtain (\ref{eq: limit of hyperbolic distance})
and thus the desired estimate. Here $\theta>0$ captures the tree-reduced
nature of $\gamma.$ With some further delicate analysis, one can
establish a similar estimate for a path $\gamma:\ [0,1]\rightarrow\mathbb{R}^{d}$
which is continuously differentiable when parametrized at constant
speed. The estimate takes the form 
\[
0\leqslant\lambda\|\gamma\|_{1-\mathrm{var}}-\rho(X_{1}^{\lambda},o)\leqslant C_{1}\lambda\|\gamma\|_{1-\mathrm{var}}\delta_{\gamma}\left(\frac{C_{2}}{\lambda}\right)^{2}
\]
provided that $\lambda$ is large, where $C_{1},C_{2}$ are universal
constants and $\delta_{\gamma}(\cdot)$ is the modulus of continuity
for $\dot{\gamma}.$ In particular, we again obtain (\ref{eq: limit of hyperbolic distance})
and thus the desired estimate. Here the existence of modulus of continuity
for the derivative $\dot{\gamma}$ already implies that $\gamma$
is tree-reduced implicitly. In any case, the fundamental reason which
makes the technique of hyperbolic development work is hidden in the
nature of Fact 1 and 2.

If one is attempting to attack the length conjecture $\widetilde{l}=\|\gamma\|_{1-\mathrm{var}}$
for a general tree-reduced path with bounded variation by using the
idea of hyperbolic development, it seems that a crucial point is to
find a quantity $\omega_{\gamma},$ a certain kind of ``modulus of
continuity'', which on the one hand captures the tree-reduced nature
of $\gamma$ quantitatively, and on the other hand can be used to
control the growth of $\lambda\mapsto\lambda\|\gamma\|_{1-\mathrm{var}}-\rho(X_{1}^{\lambda},o)$
(difference between hyperbolic length and hyperbolic distance for
the rescaled path). Up to the current point, this fascinating and
challenging problem remains unsolved.

\subsection{The hyperbolic development of Brownian motion and a lower estimate
for $\kappa_{d}$}

In spite of the huge difficulty in obtaining lower estimates of the
hyperbolic distance function in the general deterministic setting,
it is surprising that a simple martingale argument will give us a
meaningful lower estimate for the hyperbolic development of Brownian
motion. In particular, we can obtain a lower estimate on the constant
$\kappa_{d}$.

From now on, we assume that $\mathbb{R}^{d}$ is equipped with the
$l^{p}$-norm for some given $1\leqslant p\leqslant2,$ and the tensor
products over $\mathbb{R}^{d}$ are equipped with the associated projective
tensor norms.

The following characterization of projective tensor norms is important
for us. 
\begin{lem}
\label{lem: dual characterization of projective tensor norm}For each
$\xi\in\left(\mathbb{R}^{d}\right)^{\otimes n},$ we have 
\[
\|\xi\|_{\mathrm{proj}}=\sup\left\{ \left|\Phi(\xi)\right|:\ \Phi\in L(\mathbb{R}^{d},\cdots,\mathbb{R}^{d};\mathbb{R}^{1}),\ \|\Phi\|\leqslant1\right\} ,
\]
where we identify $L(\mathbb{R}^{d},\cdots,\mathbb{R}^{d};\mathbb{R}^{1})$
with $\left((\mathbb{R}^{d})^{\otimes n}\right)^{*}$ through the
universal property, and 
\[
\|\Phi\|\triangleq\inf\{C\geqslant0:\ |\Phi(v_{1},\cdots,v_{n})|\leqslant C\|v_{1}\|\cdots\|v_{n}\|\ \ \forall v_{1},\cdots,v_{n}\in\mathbb{R}^{d}\}.
\]
\end{lem}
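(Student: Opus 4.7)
The plan is to prove the two inequalities separately by passing through the identification of $\left((\mathbb{R}^{d})^{\otimes n}\right)^{*}$ with $L(\mathbb{R}^{d},\cdots,\mathbb{R}^{d};\mathbb{R})$, and to show that under this identification the projective tensor norm and the multilinear operator norm $\|\cdot\|$ are dual to each other. Concretely, given $\Phi\in L(\mathbb{R}^{d},\cdots,\mathbb{R}^{d};\mathbb{R})$, let $T_{\Phi}$ denote the unique linear functional on $(\mathbb{R}^{d})^{\otimes n}$ with $T_{\Phi}(v_{1}\otimes\cdots\otimes v_{n})=\Phi(v_{1},\ldots,v_{n})$ given by the universal property.

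For the easy direction $\|\xi\|_{\mathrm{proj}}\geq\sup_{\|\Phi\|\leq1}|\Phi(\xi)|$, I would simply expand any representation $\xi=\sum_{l}v_{1}^{(l)}\otimes\cdots\otimes v_{n}^{(l)}$, estimate
\[
|T_{\Phi}(\xi)|\leq\sum_{l}|\Phi(v_{1}^{(l)},\ldots,v_{n}^{(l)})|\leq\|\Phi\|\sum_{l}\|v_{1}^{(l)}\|\cdots\|v_{n}^{(l)}\|,
\]
and take the infimum over representations to obtain $|T_{\Phi}(\xi)|\leq\|\Phi\|\cdot\|\xi\|_{\mathrm{proj}}$. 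This also shows that $T_{\Phi}$ is continuous with operator norm at most $\|\Phi\|$ when $(\mathbb{R}^{d})^{\otimes n}$ carries the projective norm.

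For the reverse inequality, I would use Hahn--Banach on the finite-dimensional normed space $X:=((\mathbb{R}^{d})^{\otimes n},\|\cdot\|_{\mathrm{proj}})$. Fix $\xi\in X$ and choose, by Hahn--Banach, a linear functional $T\in X^{*}$ with $|T(\xi)|=\|\xi\|_{\mathrm{proj}}$ and operator norm $\|T\|_{X^{*}}\leq 1$. Define the associated multilinear form $\Phi(v_{1},\ldots,v_{n}):=T(v_{1}\otimes\cdots\otimes v_{n})$. Since $\|v_{1}\otimes\cdots\otimes v_{n}\|_{\mathrm{proj}}\leq\|v_{1}\|\cdots\|v_{n}\|$ directly from the definition of the projective norm, we get $|\Phi(v_{1},\ldots,v_{n})|\leq\|v_{1}\|\cdots\|v_{n}\|$, i.e.\ $\|\Phi\|\leq1$. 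Then $|\Phi(\xi)|=|T(\xi)|=\|\xi\|_{\mathrm{proj}}$, completing the proof.

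The only nontrivial ingredient is Hahn--Banach (or equivalently the fact that in a finite-dimensional normed space the bidual pairing recovers the original norm), which is routine here. The conceptual step that might seem delicate is checking that operator norm on $X^{*}$ translates exactly into the multilinear norm $\|\Phi\|$; but this is handled by combining the two bounds above, which together give $\|\Phi\|=\|T\|_{X^{*}}$ for every correspondence $\Phi\leftrightarrow T$. No admissibility of the tensor norm enters: the argument uses only the definition of $\|\cdot\|_{\mathrm{proj}}$ and the universal property of the algebraic tensor product.
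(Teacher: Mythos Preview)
Your argument is correct: the easy direction follows immediately from the definition of the projective norm, and the reverse inequality is the standard Hahn--Banach norming-functional trick, together with the observation that $\|v_{1}\otimes\cdots\otimes v_{n}\|_{\mathrm{proj}}\leq\|v_{1}\|\cdots\|v_{n}\|$ so that the operator norm on $X^{*}$ dominates the multilinear norm. The paper itself does not give a proof at all but simply cites Identity~(2.3) in Ryan's \emph{Introduction to Tensor Products of Banach Spaces}; your write-up is exactly the standard argument one would find behind that citation, so there is no meaningful difference in approach---you have just unpacked the reference.
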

\begin{proof}
See \cite{Ryan16}, Identity (2.3). 
\end{proof}
Let $B_{t}=(B_{t}^{1},\cdots,B_{t}^{d})$ be a $d$-dimensional Brownian
motion. We define 
\[
\widetilde{L}_{t}\triangleq\limsup_{n\rightarrow\infty}\left(\left(\frac{n}{2}\right)!\|\mathbb{B}_{0,t}^{n}\|_{\mathrm{proj}}\right)^{\frac{2}{n}}.
\]

For each $\lambda>0,$ we consider the Cartan development 
\begin{equation}
\begin{cases}
d\Gamma_{t}^{\lambda}=\lambda\Gamma_{t}^{\lambda}F(\circ\mathrm{d}B_{t}), & t\geqslant0,\\
\Gamma_{0}^{\lambda}=\mathrm{I}_{d+1},
\end{cases}\label{eq:Rescaled Cartan Development}
\end{equation}
of $\lambda\cdot B_{t}$, where the differential equation is understood
in the Stratonovich sense. Let $X_{t}^{\lambda}\triangleq\Gamma_{t}^{\lambda}o$
be the hyperbolic development of $\lambda\cdot B_{t}.$ As in the
Cartan development driven by bounded variation paths, $\Gamma_{t}^{\lambda}$
also defines a path on the isometry group $\mathrm{SO}(d,1)$ and
hence $X_{t}^{\lambda}$ is a path on $\mathbb{H}^{d}$ starting at
$o$.

Picard's iteration again shows that 
\begin{equation}
\Gamma_{t}^{\lambda}=\sum_{n=0}^{\infty}\lambda^{n}\int_{0<t_{1}<\cdots<t_{n}<t}F(\circ\mathrm{d}B_{t_{1}})\cdots F(\circ\mathrm{d}B_{t_{n}}).\label{eq: Cartan development of BM}
\end{equation}
Define $h_{t}^{\lambda}\triangleq\left(X_{t}^{\lambda}\right)^{d+1}$
to be the hyperbolic height of $X_{t}^{\lambda}$ (the last coordinate
of $X_{t}^{\lambda}$). It follows from (\ref{eq: relationship between Lorenz metric and hyperbolic metric}),
(\ref{eq: Cartan development of BM}) and the definition of $F$ that
\begin{align}
h_{t}^{\lambda} & =\cosh\rho(X_{t}^{\lambda},o)\nonumber \\
 & =\sum_{n=0}^{\infty}\lambda^{n}\int_{0<t_{1}<\cdots<t_{n}<t}\left(F(\circ\mathrm{d}B_{t_{1}})\cdots F(\circ\mathrm{d}B_{t_{n}})o\right)^{d+1}\nonumber \\
 & =\sum_{n=0}^{\infty}\lambda^{2n}\int_{0<t_{1}<\cdots<t_{n}<t}\langle\circ\mathrm{d}B_{t_{1}},\circ\mathrm{d}B_{t_{2}}\rangle\cdots\langle\circ\mathrm{d}B_{t_{2n-1}},\circ\mathrm{d}B_{t_{2n}}\rangle,\label{eq: formula for hyperbolic height}
\end{align}

The following result shows that the quantity $\widetilde{L}_{t}$
can be controlled from below in terms of the asymptotics of $h_{t}^{\lambda}$
as $\lambda\rightarrow\infty.$
\begin{prop}
\label{prop: lower estimate in terms of Brownian hyperbolic height}With
probability one, we have 
\[
\limsup_{\lambda\rightarrow\infty}\frac{1}{\lambda^{2}}\log h_{t}^{\lambda}\leqslant\widetilde{L}_{t},\ \ \ \forall t\geqslant0.
\]
\end{prop}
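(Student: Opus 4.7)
The plan is to obtain a pathwise upper bound for $h_t^\lambda$ in terms of the projective tensor norms of the Brownian signature, and then read off the exponential growth rate directly from the definition of $\widetilde{L}_t$. The starting point is the explicit identity (\ref{eq: formula for hyperbolic height}): each summand is of the form $\lambda^{2n}\Phi_n(\mathbb{B}_{0,t}^{2n})$, where $\Phi_n \in \bigl((\mathbb{R}^d)^{\otimes 2n}\bigr)^{*}$ corresponds, via the universal property, to the multilinear map
\[
(v_1,\ldots,v_{2n}) \longmapsto \langle v_1, v_2\rangle \cdots \langle v_{2n-1}, v_{2n}\rangle.
\]

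The key analytic step is to verify that $\Phi_n$ has operator norm at most one when $\mathbb{R}^d$ carries the $l^p$-norm for $1 \leq p \leq 2$. This follows from Cauchy--Schwarz combined with the elementary monotonicity $\|v\|_{l^2} \leq \|v\|_{l^p}$, valid in this range of $p$: each inner product satisfies $|\langle v_{2i-1}, v_{2i}\rangle| \leq \|v_{2i-1}\|_{l^p}\|v_{2i}\|_{l^p}$, and multiplying gives $|\Phi_n(v_1,\ldots,v_{2n})| \leq \prod_{i=1}^{2n}\|v_i\|_{l^p}$. Lemma~\ref{lem: dual characterization of projective tensor norm} then upgrades this to $|\Phi_n(\mathbb{B}_{0,t}^{2n})| \leq \|\mathbb{B}_{0,t}^{2n}\|_{\mathrm{proj}}$, and summing yields the pathwise estimate
\[
h_t^\lambda \leq \sum_{n=0}^{\infty} \lambda^{2n} \|\mathbb{B}_{0,t}^{2n}\|_{\mathrm{proj}}.
\]

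From this point, the asymptotic is standard. Fix $\epsilon > 0$. By the definition of $\widetilde{L}_t$, there is an $N_0 = N_0(\omega,t,\epsilon)$ with $\|\mathbb{B}_{0,t}^{2n}\|_{\mathrm{proj}} \leq (\widetilde{L}_t + \epsilon)^n / n!$ for all $n \geq N_0$. Splitting the series at $N_0$ produces a polynomial head in $\lambda$ of fixed degree plus a tail dominated by $\exp(\lambda^2(\widetilde{L}_t + \epsilon))$. Taking logarithms, dividing by $\lambda^2$, and sending first $\lambda \to \infty$ and then $\epsilon \downarrow 0$ gives the stated inequality for the chosen $(\omega,t)$.

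The remaining issue is that the conclusion must hold with probability one \emph{simultaneously} for all $t \geq 0$, and this is where I expect the mildest obstacle to lie. Fortunately the Picard expansion (\ref{eq: Cartan development of BM}) for $\Gamma_t^\lambda$ converges pathwise once all iterated Stratonovich integrals of $B$ are well defined, which is a single full-measure event; on this event the dominating inequality $h_t^\lambda \leq \sum_n \lambda^{2n}\|\mathbb{B}_{0,t}^{2n}\|_{\mathrm{proj}}$ holds for \emph{every} $t$, and the subsequent limsup argument is purely pointwise in $(\omega,t,\epsilon)$. No additional null set is needed, so the quantifier ``for all $t \geq 0$'' is captured automatically.
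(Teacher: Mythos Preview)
Your proof is correct and follows essentially the same route as the paper: both bound $h_t^\lambda$ termwise via the multilinear functionals $\Phi_n$ and the dual characterization of the projective norm, then read off the exponential rate by splitting the series into a polynomial head and an exponential tail. The only cosmetic difference is bookkeeping---the paper introduces $\widetilde{L}_t^{2\alpha}=\sup_{n\geqslant 2\alpha}((n/2)!\|\mathbb{B}_{0,t}^n\|_{\mathrm{proj}})^{2/n}$ and sends $\alpha\to\infty$, while you work directly with $\widetilde{L}_t+\epsilon$ and send $\epsilon\downarrow 0$; these are equivalent since $\inf_\alpha\widetilde{L}_t^{2\alpha}=\widetilde{L}_t$.
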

\begin{proof}
For each $n\geqslant1,$ define a real-valued $2n$-linear map $\Phi_{n}$
over $\mathbb{R}^{d}$ by 
\[
\Phi_{n}(v_{1},\cdots,v_{2n})\triangleq\langle v_{1},v_{2}\rangle\cdots\langle v_{2n-1},v_{2n}\rangle,\ \ \ v_{1},\cdots,v_{2n}\in\mathbb{R}^{d}.
\]
Since we are taking the $l^{p}$-norm on $\mathbb{R}^{d}$ for $1\leqslant p\leqslant2,$
we see that 
\begin{align*}
\left|\Phi_{n}(v_{1},\cdots,v_{2n})\right| & \leqslant\|v_{1}\|_{l^{2}}\cdots\|v_{2n}\|_{l^{2}}\\
 & \leqslant\|v_{1}\|_{l^{p}}\cdots\|v_{2n}\|_{l^{p}}.
\end{align*}
In particular, $\|\Phi_{n}\|\leqslant1.$ Therefore, by Lemma \ref{lem: dual characterization of projective tensor norm},
we have 
\begin{equation}
\left|\int_{0<t_{1}<\cdots<t_{n}<t}\langle\circ\mathrm{d}B_{t_{1}},\circ\mathrm{d}B_{t_{2}}\rangle\cdots\langle\circ\mathrm{d}B_{t_{2n-1}},\circ\mathrm{d}B_{t_{2n}}\rangle\right|=|\Phi_{n}(\mathbb{B}_{0,t}^{2n})|\leqslant\|\mathbb{B}_{0,t}^{2n}\|_{\mathrm{proj}}.\label{eq:inner product dominated by projective norm}
\end{equation}

Now for each $\alpha\geqslant1,$ define 
\[
\widetilde{L}_{t}^{\alpha}\triangleq\sup_{n\geqslant\alpha}\left(\left(\frac{n}{2}\right)!\|\mathbb{B}_{0,t}^{n}\|_{\mathrm{proj}}\right)^{\frac{2}{n}}.
\]
It follows from (\ref{eq:Rescaled Cartan Development}) and (\ref{eq: Cartan development of BM})
that 
\begin{align*}
h_{t}^{\lambda} & \leqslant\sum_{n=0}^{\infty}\lambda^{2n}\|\mathbb{B}_{0,t}^{2n}\|_{\mathrm{proj}}\\
 & =\sum_{n=0}^{\alpha-1}\lambda^{2n}\|\mathbb{B}_{0,t}^{2n}\|_{\mathrm{proj}}+\sum_{n=\alpha}^{\infty}\lambda^{2n}\|\mathbb{B}_{0,t}^{2n}\|_{\mathrm{proj}}\\
 & \leqslant\sum_{n=0}^{\alpha-1}\lambda^{2n}\|\mathbb{B}_{0,t}^{2n}\|_{\mathrm{proj}}+\sum_{n=\alpha}^{\infty}\lambda^{2n}\cdot\frac{\left(\widetilde{L}_{t}^{2\alpha}\right)^{n}}{n!}\\
 & =\exp\left(\lambda^{2}\widetilde{L}_{t}^{2\alpha}\right)+\sum_{n=0}^{\alpha-1}\lambda^{2n}\left(\|\mathbb{B}_{0,t}^{2n}\|_{\mathrm{proj}}-\frac{\left(\widetilde{L}_{t}^{2\alpha}\right)^{n}}{n!}\right).
\end{align*}
Therefore, 
\begin{align*}
 & \limsup_{\lambda\rightarrow\infty}\frac{1}{\lambda^{2}}\log h_{t}^{\lambda}\\
 & \leqslant\limsup_{\lambda\rightarrow\infty}\frac{1}{\lambda^{2}}\log\left(\exp\left(\lambda^{2}\widetilde{L}_{t}^{2\alpha}\right)+\sum_{n=0}^{\alpha-1}\lambda^{2n}\left(\|\mathbb{B}_{0,t}^{2n}\|_{\mathrm{proj}}-\frac{\left(\widetilde{L}_{t}^{2\alpha}\right)^{n}}{n!}\right)\right)\\
 & =\widetilde{L}_{t}^{2\alpha}.
\end{align*}
Since $\alpha$ is arbitrary, we conclude that 
\[
\limsup_{\lambda\rightarrow\infty}\frac{1}{\lambda^{2}}\log h_{t}^{\lambda}\leqslant\inf_{\alpha\geqslant1}\widetilde{L}_{t}^{2\alpha}=\widetilde{L}_{t}.
\]
\end{proof}
The following result is the probabilistic counterpart of a lower estimate
on the hyperbolic height function $h_{t}^{\lambda}.$ 
\begin{lem}
\label{lem: lower estimate on Brownian hyperbolic height}For any
$0<\mu<d-1,$ we have 
\[
\mathbb{E}\left[\left(h_{t}^{\lambda}\right)^{-\mu}\right]\leqslant\exp\left(-\frac{\lambda^{2}\mu(d-1-\mu)}{2}t\right).
\]
\end{lem}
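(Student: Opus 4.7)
The plan is to recognize $X_{t}^{\lambda}$ as a Brownian motion on $\mathbb{H}^{d}$ whose generator is $\frac{\lambda^{2}}{2}\Delta_{\mathbb{H}^{d}}$ --- this is the standard consequence of the Cartan development construction applied to the rescaled driver $\lambda\cdot B_{t}$, with the factor $\lambda^{2}$ coming from Brownian scaling. Once this identification is in place, the bound on $\mathbb{E}[(h_{t}^{\lambda})^{-\mu}]$ reduces to an analytic computation of $\Delta_{\mathbb{H}^{d}}(h^{-\mu})$ followed by a short supermartingale argument.

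First I would exploit the fact that, by (\ref{eq: relationship between Lorenz metric and hyperbolic metric}), $h=\cosh\rho$ is a radial function of the hyperbolic distance to $o$, so $h^{-\mu}$ is radial and one can use the radial Laplacian formula
\[
\Delta_{\mathbb{H}^{d}}f(\rho)=f''(\rho)+(d-1)\coth\rho\cdot f'(\rho).
\]
A direct computation with $f(\rho)=(\cosh\rho)^{-\mu}$, using $\sinh^{2}\rho=\cosh^{2}\rho-1$ to collect terms, produces the exact identity
\[
\Delta_{\mathbb{H}^{d}}(\cosh\rho)^{-\mu}=-\mu(d-1-\mu)(\cosh\rho)^{-\mu}-\mu(\mu+1)(\cosh\rho)^{-\mu-2}.
\]
For $0<\mu<d-1$ both coefficients on the right are positive, so dropping the second (non-positive) term yields the pointwise inequality
\[
\Delta_{\mathbb{H}^{d}}h^{-\mu}\leq-\mu(d-1-\mu)\,h^{-\mu}.
\]

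Next, It\^o's formula applied to $h^{-\mu}(X_{t}^{\lambda})$ gives
\[
h_{t}^{-\mu}=1+M_{t}+\frac{\lambda^{2}}{2}\int_{0}^{t}\Delta_{\mathbb{H}^{d}}(h^{-\mu})(X_{s}^{\lambda})\,\mathrm{d}s,
\]
where $M_{t}$ is a local martingale. Setting $a\triangleq\tfrac{\lambda^{2}\mu(d-1-\mu)}{2}$ and considering $Z_{t}\triangleq e^{at}h_{t}^{-\mu}$, the drift inequality just established shows $Z_{t}$ is a local supermartingale. Because $h_{t}^{\lambda}\geq1$ always, one has $0\leq Z_{t}\leq e^{at}$, so a routine localization-plus-bounded-convergence argument upgrades $Z_{t}$ to a genuine supermartingale. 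Therefore $\mathbb{E}[Z_{t}]\leq Z_{0}=1$, which rearranges to the claimed bound
\[
\mathbb{E}\bigl[(h_{t}^{\lambda})^{-\mu}\bigr]\leq\exp\!\left(-\frac{\lambda^{2}\mu(d-1-\mu)}{2}\,t\right).
\]

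The only step that requires a little care is verifying that the Stratonovich development $\Gamma_{t}^{\lambda}$ really produces a process on $\mathbb{H}^{d}$ with generator $\tfrac{\lambda^{2}}{2}\Delta_{\mathbb{H}^{d}}$. This can be checked directly from (\ref{eq:Rescaled Cartan Development}) by converting the Stratonovich SDE to It\^o form and computing the drift correction: the particular structure of $F$ and the constraint $\Gamma_{t}^{\lambda}\in\mathrm{SO}(d,1)$ force this correction to coincide with the hyperbolic Laplacian. Alternatively, one can simply invoke the standard stochastic-geometry fact that Cartan development of Euclidean Brownian motion is Brownian motion on the target Riemannian manifold. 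Beyond this identification no obstacle remains, because $h^{-\mu}$ is globally bounded by $1$ and the Gr\"onwall/supermartingale step is entirely routine.
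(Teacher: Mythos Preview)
Your proof is correct and is essentially the same argument as the paper's, just packaged geometrically: where the paper converts the Stratonovich SDE for $\Gamma_{t}^{\lambda}$ to It\^o form by hand, extracts the scalar SDE for $h_{t}^{\lambda}$, and computes its quadratic variation from the $\mathrm{SO}(d,1)$ constraint, you invoke the identification of $X_{t}^{\lambda}$ with hyperbolic Brownian motion and read off the drift from the radial Laplacian formula; both routes arrive at the identical drift decomposition $-\frac{\lambda^{2}\mu(d-1-\mu)}{2}h^{-\mu}-\frac{\lambda^{2}\mu(\mu+1)}{2}h^{-\mu-2}$. The paper then takes expectations and applies Gronwall, which is equivalent to your supermartingale argument for $e^{at}h_{t}^{-\mu}$.
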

\begin{proof}
Throughout the rest of this paper, we will use $\cdot\mathrm{d}$
to denote the Itô differential. 

Note that 
\[
F(\mathrm{d}B_{t})\cdot F(\mathrm{d}B_{t})=\left(\begin{array}{cc}
0 & \mathrm{d}B_{t}\\
(\mathrm{d}B_{t})^{*} & 0
\end{array}\right)\cdot\left(\begin{array}{cc}
0 & \mathrm{d}B_{t}\\
(\mathrm{d}B_{t})^{*} & 0
\end{array}\right)=\left(\begin{array}{cc}
\mathrm{I}_{d} & 0\\
0 & d
\end{array}\right)\mathrm{d}t.
\]
Applying the Itô-Stratonovich conversion to the differential equation
for $\Gamma_{t}^{\lambda}$, we have 
\begin{align*}
\mathrm{d}\Gamma_{t}^{\lambda} & =\lambda\Gamma_{t}^{\lambda}\cdot F(\mathrm{d}B_{t})+\frac{\lambda}{2}d\Gamma_{t}^{\lambda}\cdot F(\mathrm{d}B_{t})\\
 & =\lambda\Gamma_{t}^{\lambda}\cdot F(\mathrm{d}B_{t})+\frac{\lambda^{2}}{2}\Gamma_{t}^{\lambda}\left(F(\mathrm{d}B_{t})\cdot F(\mathrm{d}B_{t})\right)\\
 & =\lambda\Gamma_{t}^{\lambda}\cdot F(\mathrm{d}B_{t})+\frac{\lambda^{2}}{2}\Gamma_{t}^{\lambda}\left(\begin{array}{cc}
\mathrm{I}_{d} & 0\\
0 & d
\end{array}\right)\mathrm{d}t.
\end{align*}
Therefore by restricting our attention to the $(d+1,d+1)$ coordinate
of the matrix, 
\begin{align}
\mathrm{d}h_{t}^{\lambda} & =\mathrm{d}\left(\Gamma_{t}^{\lambda}\right)_{d+1}^{d+1}\nonumber \\
 & =\lambda\sum_{i=1}^{d}\left(\Gamma_{t}^{\lambda}\right)_{i}^{d+1}\cdot\mathrm{d}B_{t}^{i}+\frac{\lambda^{2}d}{2}h_{t}^{\lambda}\mathrm{d}t.\label{eq:SDE for h}
\end{align}
Moreover, since $\Gamma_{t}^{\lambda}\in\mathrm{SO}(d,1),$ we know
that 
\begin{equation}
\sum_{i=1}^{d}\left(\left(\Gamma_{t}^{\lambda}\right)_{i}^{d+1}\right)^{2}-\left(h_{t}^{\lambda}\right)^{2}=-1,\label{eq:hyperbolic surface}
\end{equation}
and hence by (\ref{eq:SDE for h}) and (\ref{eq:hyperbolic surface}),
\[
\mathrm{d}h_{t}^{\lambda}\cdot\mathrm{d}h_{t}^{\lambda}=\lambda^{2}\sum_{i=1}^{d}\left(\left(\Gamma_{t}^{\lambda}\right)_{i}^{d+1}\right)^{2}\mathrm{d}t=\lambda^{2}\left(\left(h_{t}^{\lambda}\right)^{2}-1\right)\mathrm{d}t.
\]

Note that $h_{t}^{\lambda}\geqslant1$ and hence we may apply Itô's
formula to $\left(h_{t}^{\lambda}\right)^{-\mu}$ and then (\ref{eq:SDE for h})
to obtain, 
\begin{align*}
\mathrm{d}\left(h_{t}^{\lambda}\right)^{-\mu} & =-\mu\left(h_{t}^{\lambda}\right)^{-(\mu+1)}\cdot\mathrm{d}h_{t}^{\lambda}+\frac{\mu(\mu+1)}{2}\left(h_{t}^{\lambda}\right)^{-(\mu+2)}\left(\mathrm{d}h_{t}^{\lambda}\cdot\mathrm{d}h_{t}^{\lambda}\right)\\
 & =-\lambda\mu\left(h_{t}^{\lambda}\right)^{-(\mu+1)}\sum_{i=1}^{d}\left(\Gamma_{t}^{\lambda}\right)_{i}^{d+1}\cdot\mathrm{d}B_{t}^{i}\\
 & \ \ \ -\left(\frac{\lambda^{2}\mu(d-1-\mu)}{2}\left(h_{t}^{\lambda}\right)^{-\mu}+\frac{\lambda^{2}\mu(\mu+1)}{2}\left(h_{t}^{\lambda}\right)^{-(\mu+2)}\right)\mathrm{d}t.
\end{align*}
By taking expectation and differentiating with respect to $t,$ we
obtain that 
\begin{align*}
\frac{\mathrm{d}}{\mathrm{d}t}\mathbb{E}\left[\left(h_{t}^{\lambda}\right)^{-\mu}\right] & =-\frac{\lambda^{2}\mu(d-1-\mu)}{2}\mathbb{E}\left[\left(h_{t}^{\lambda}\right)^{-\mu}\right]\\
 & \ \ \ -\frac{\lambda^{2}\mu(\mu+1)}{2}\mathbb{E}\left[\left(h_{t}^{\lambda}\right)^{-(\mu+2)}\right]\\
 & \leqslant-\frac{\lambda^{2}\mu(d-1-\mu)}{2}\mathbb{E}\left[\left(h_{t}^{\lambda}\right)^{-\mu}\right],
\end{align*}
where in the final inequality we used that $h_{t}^{\lambda}\geqslant1$
(see for example (\ref{eq: formula for hyperbolic height})). By Gronwall's
inequality, we arrive at 
\[
\mathbb{E}\left[\left(h_{t}^{\lambda}\right)^{-\mu}\right]\leqslant\exp\left(-\frac{\lambda^{2}\mu(d-1-\mu)}{2}t\right).
\]
\end{proof}
Now we can state our main lower estimate on $\kappa_{d}.$
\begin{thm}
\label{thm: lower estimate of constant}Under the $l^{p}$-norm ($1\leqslant p\leqslant2$)
on $\mathbb{R}^{d}$ and the associated projective tensor norms on
the tensor products, we have 
\[
\kappa_{d}\geqslant\frac{d-1}{2}.
\]
\end{thm}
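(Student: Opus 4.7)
The plan is to combine the two preceding results (Proposition \ref{prop: lower estimate in terms of Brownian hyperbolic height} on the upper bound $\limsup_{\lambda\to\infty}\lambda^{-2}\log h_{t}^{\lambda}\leqslant\widetilde{L}_{t}$, and Lemma \ref{lem: lower estimate on Brownian hyperbolic height} on the negative-moment bound $\mathbb{E}[(h_t^\lambda)^{-\mu}]\leqslant\exp(-\lambda^2\mu(d-1-\mu)t/2)$), together with Corollary \ref{cor: L =00003D00003D k t non-uniformly} which identifies $\widetilde{L}_{t}$ almost surely with $\kappa_d t$. The only missing ingredient is a matching almost-sure lower bound on $\lambda^{-2}\log h_t^\lambda$, and this is just a Chebyshev--Borel--Cantelli argument driven by the negative-moment estimate.

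First, I would fix $t>0$ and $0<\mu<d-1$, and apply Markov's inequality to $(h_{t}^{\lambda})^{-\mu}$: for any $a<(d-1-\mu)t/2$,
\[
\mathbb{P}\bigl(h_{t}^{\lambda}\leqslant e^{\lambda^{2}a}\bigr)=\mathbb{P}\bigl((h_{t}^{\lambda})^{-\mu}\geqslant e^{-\lambda^{2}a\mu}\bigr)\leqslant e^{\lambda^{2}a\mu}\mathbb{E}\bigl[(h_{t}^{\lambda})^{-\mu}\bigr]\leqslant\exp\!\Bigl(-\lambda^{2}\mu\bigl(\tfrac{(d-1-\mu)t}{2}-a\bigr)\Bigr).
\]
Restricting to the integer sequence $\lambda=n\in\mathbb{N}$, the right-hand side is summable in $n$, so Borel--Cantelli gives that almost surely $h_{t}^{n}>e^{n^{2}a}$ for all sufficiently large $n$. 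Consequently
\[
\limsup_{\lambda\rightarrow\infty}\frac{1}{\lambda^{2}}\log h_{t}^{\lambda}\geqslant\liminf_{n\to\infty}\frac{1}{n^{2}}\log h_{t}^{n}\geqslant a\qquad\text{a.s.}
\]

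Next I would combine this with Proposition \ref{prop: lower estimate in terms of Brownian hyperbolic height} and Corollary \ref{cor: L =00003D00003D k t non-uniformly}: almost surely,
\[
a\leqslant\limsup_{\lambda\rightarrow\infty}\frac{1}{\lambda^{2}}\log h_{t}^{\lambda}\leqslant\widetilde{L}_{t}=\kappa_{d}t.
\]
Letting first $a\uparrow(d-1-\mu)t/2$ and then $\mu\downarrow0$ yields $(d-1)t/2\leqslant\kappa_{d}t$, hence $\kappa_{d}\geqslant(d-1)/2$ as desired.

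Conceptually there is no real obstacle left, since the heavy lifting—upper-bounding $\log h_t^\lambda$ by the signature tail via the projective-norm duality, and deriving the exponential decay of negative moments through the Itô SDE for $h_t^\lambda$—has already been done. The only subtle point is the direction of the comparison: the Proposition controls a $\limsup$ from above, so for the argument to close we must produce an almost-sure lower bound on that same $\limsup$, not merely on $\mathbb{E}[\log h_t^\lambda]$. The Borel--Cantelli step along the discrete sequence $\lambda=n$ handles this cleanly, because a $\liminf$ along any subsequence is automatically a lower bound for the $\limsup$ along the continuous parameter.
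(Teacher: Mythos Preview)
Your proposal is correct and follows essentially the same route as the paper: a Markov/Chebyshev inequality applied to the negative-moment bound of Lemma~\ref{lem: lower estimate on Brownian hyperbolic height}, Borel--Cantelli along the integer sequence $\lambda=n$, and then Proposition~\ref{prop: lower estimate in terms of Brownian hyperbolic height} together with $\widetilde{L}_t=\kappa_d t$ to close the argument. The only cosmetic difference is that the paper parametrizes the threshold as $K_m=\exp(m^{2}(d-1-\mu)s/2)$ with a rational $s<t$ in place of your $a<(d-1-\mu)t/2$, which amounts to the same thing.
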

\begin{proof}
Fix $t>0,$ $\lambda>0$ and $0<\mu<d-1.$ According to Lemma \ref{lem: lower estimate on Brownian hyperbolic height}
(which we have just proved), for each $K>0$,

\begin{align*}
\mathbb{P}\left(h_{t}^{\lambda}\leqslant K\right) & =\mathbb{P}\left(\left(h_{t}^{\lambda}\right)^{-\mu}\geqslant K^{-\mu}\right)\\
 & \leqslant K^{\mu}\mathbb{E}\left[\left(h_{t}^{\lambda}\right)^{-\mu}\right]\\
 & \leqslant K^{\mu}\exp\left(-\frac{\lambda^{2}\mu(d-1-\mu)}{2}t\right).
\end{align*}

Let $s\in\mathbb{Q}$ such that $s<t$. Now for each $m\geqslant1$,
define $\lambda_{m}\triangleq m$ and 
\[
K_{m}\triangleq\exp\left(\frac{m^{2}(d-1-\mu)}{2}s\right).
\]
It follows that 
\[
\mathbb{P}\left(h_{t}^{\lambda_{m}}\leqslant K_{m}\right)\leqslant\exp\left(-\frac{m^{2}\mu(d-1-\mu)}{2}(t-s)\right).
\]
In particular, $\sum_{m=1}^{\infty}\mathbb{P}\left(h_{t}^{\lambda_{m}}\leqslant K_{m}\right)<\infty.$
By the Borel-Cantelli lemma, there exists a $\mathbb{P}$-null set
$\mathcal{N}(s,t,\mu),$ such that for any $\omega\notin\mathcal{N}(s,t,\mu),$
there exists $M(\omega)\geqslant1$ with 
\[
h_{t}^{\lambda_{m}}(\omega)>\exp\left(\frac{m^{2}(d-1-\mu)}{2}s\right),\ \ \ \forall m\geqslant M(\omega).
\]
Therefore, 
\[
\limsup_{m\rightarrow\infty}\frac{1}{m^{2}}\log h_{t}^{\lambda_{m}}(\omega)\geqslant\frac{d-1-\mu}{2}s.
\]
By enlarging the $\mathbb{P}$-null set through rationals $s\uparrow t$
and $\mu\downarrow0$, we conclude that 
\[
\limsup_{m\rightarrow\infty}\frac{1}{m^{2}}\log h_{t}^{\lambda_{m}}\geqslant\frac{d-1}{2}t
\]
for almost surely.

Finally, according to Proposition \ref{prop: lower estimate in terms of Brownian hyperbolic height}
which relates $\tilde{L}_{t}$ and $h_{t}$, we obtain that 
\[
\kappa_{d}=\frac{\widetilde{L}_{t}}{t}\geqslant\frac{d-1}{2}.
\]
\end{proof}

\section{Applications to the Brownian rough path}

We present a few interesting consequences of the lower estimate on
$\kappa_{d}$ given in Theorem \ref{thm: lower estimate of constant}.

Let us consider the $d$-dimensional Brownian motion $B_{t}$ on $[0,1].$
Recall that with probability one, $B_{t}$ has a canonical lifting
$\mathbf{B}_{t}$ as geometric $p$-rough path for $2<p<3.$ As a
process on $G^{2}(\mathbb{R}^{d}),$ the Brownian rough path $\mathbf{B}_{t}$
is canonically defined and it is independent of the choice of tensor
norms on $(\mathbb{R}^{d})^{\otimes2}.$
\begin{cor}
\label{cor: uniqueness of signature for Brownian rough path}If $d>1$,
then for almost every $\omega,$ the path $t\mapsto\mathbf{B}_{t}(\omega)$
is tree-reduced. 
\end{cor}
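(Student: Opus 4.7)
The plan is to show that, with probability one, no non-trivial subinterval carries a trivial signature, from which tree-reducedness follows by the uniqueness result of \cite{BGLY16}. Recall the characterization: a geometric rough path restricted to $[s,t]$ is tree-like if and only if its signature equals the trivial element $\mathbf{1}=(1,0,0,\ldots)$; consequently $\mathbf{B}(\omega)$ contains a tree-like piece on some subinterval $[s,t]\subset[0,1]$ with $s<t$ if and only if $\mathbb{B}_{s,t}(\omega)=\mathbf{1}$. Thus it is enough to prove that almost surely, $\mathbb{B}_{s,t}\neq\mathbf{1}$ for every pair $0\leqslant s<t\leqslant1$.

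To this end, I would equip $\mathbb{R}^{d}$ with the Euclidean ($l^{2}$) norm and the tensor products with the associated projective tensor norms. This is an admissible family for which the standard basis vectors of $\mathbb{R}^{d}$ have unit norm, so both Proposition~\ref{prop: universality of null set} and Theorem~\ref{thm: lower estimate of constant} apply. The former furnishes a $\mathbb{P}$-null set $\mathcal{N}$ outside of which $\widetilde{L}_{s,t}=\kappa_{d}(t-s)$ holds simultaneously for all $s<t$, while the latter gives $\kappa_{d}\geqslant(d-1)/2>0$, using the hypothesis $d>1$.

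The conclusion is now immediate. For each $\omega\notin\mathcal{N}$ and each $s<t$, we have $\widetilde{L}_{s,t}(\omega)=\kappa_{d}(t-s)>0$. If $\mathbb{B}_{s,t}(\omega)$ were equal to $\mathbf{1}$, then $\mathbb{B}_{s,t}^{n}(\omega)=0$ for every $n\geqslant1$ and hence $\widetilde{L}_{s,t}(\omega)=0$, contradicting the preceding line. Therefore $\mathbf{B}(\omega)$ admits no tree-like piece on any non-degenerate subinterval, which is precisely the assertion that $\mathbf{B}(\omega)$ is tree-reduced.

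The main point is that the proof is really driven by two non-trivial ingredients already in hand: (i) the strict positivity $\kappa_{d}>0$, which in turn rests on the martingale/hyperbolic-height lower bound of Lemma~\ref{lem: lower estimate on Brownian hyperbolic height}, and (ii) the upgrading of the almost-sure identity $\widetilde{L}_{s,t}=\kappa_{d}(t-s)$ from a fixed pair $(s,t)$ to all pairs simultaneously, carried out in Proposition~\ref{prop: universality of null set}. Without this universality of the null set the above argument would only rule out tree-like pieces on a countable collection of intervals, which would be insufficient. The only additional input required is the characterization of tree-likeness as triviality of the signature, for which I would cite \cite{BGLY16}.
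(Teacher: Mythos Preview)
Your proof is correct and follows essentially the same approach as the paper: choose the projective tensor norms induced by the Euclidean norm, invoke Proposition~\ref{prop: universality of null set} to get $\widetilde{L}_{s,t}=\kappa_d(t-s)$ simultaneously for all $s<t$ on a single full-measure set, use Theorem~\ref{thm: lower estimate of constant} to ensure $\kappa_d>0$, and conclude that no subinterval can have trivial signature, which is the tree-reduced criterion from \cite{BGLY16}. Your additional remarks on why the universality of the null set is essential are accurate and match the paper's reasoning.
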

\begin{proof}
Let the tensor products be equipped with the projective tensor norms
associated with the $l^{2}$-norm on $\mathbb{R}^{d}.$ From Proposition
\ref{prop: universality of null set}, for every $\omega$ outside
some $\mathbb{P}$-null set $\mathcal{N}$, 
\begin{equation}
\widetilde{L}_{s,t}(\omega)=\kappa_{d}(t-s)\ \ \ \forall s<t.\label{eq:reparametrization}
\end{equation}
In addition, from Theorem \ref{thm: lower estimate of constant} we
know that the constant $\kappa_{d}$ is strictly positive and hence
$\tilde{L}_{s,t}>0$ for $s<t$. This implies that for every $\omega\notin\mathcal{N},$
the signature of $\mathbf{B}(\omega)$, $\mathbb{B}_{s,t}$, is non-trivial
for all $s<t$, which according to \cite{BGLY16} is the definition
of $\mathbf{B}$ being tree-reduced. 
\end{proof}
It was first proved by Le Jan and Qian \cite{LQ13} (see also \cite{BG15})
that the Stratonovitch signature of Brownian sample paths determine
the samples paths almost surely. Using Corollary \ref{cor: uniqueness of signature for Brownian rough path},
we obtain a stronger result below which explicitly reconstruct the
sample paths as well as its parametrization. 
\begin{cor}
\label{cor: strong uniqueness result for BRP}If $d>1$, then with
probability one, Brownian rough path together with its parametrization
can be recovered from its signature. 
\end{cor}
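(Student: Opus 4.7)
The plan is to combine three ingredients already available in the excerpt: (i) the tree-reducedness of the Brownian rough path established in Corollary \ref{cor: uniqueness of signature for Brownian rough path}; (ii) the existing uniqueness results \cite{LQ13,BG15}, which show that the signature of a tree-reduced rough path determines it up to a continuous, surjective, non-decreasing reparametrization $\phi:[0,1]\to[0,1]$; and (iii) the universal identity $\widetilde{L}_{s,t}(\omega)=\kappa_{d}(t-s)$ valid simultaneously for all $s<t$ from Proposition \ref{prop: universality of null set}, together with the strict lower bound $\kappa_{d}\geqslant(d-1)/2>0$ from Theorem \ref{thm: lower estimate of constant}. We equip $\mathbb{R}^{d}$ with its $l^{2}$-norm and the tensor products with the associated projective norms, so that (i)--(iii) all hold on a common event $\Omega_{0}$ of full measure.

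Fix $\omega\in\Omega_{0}$. By (i) and (ii), the signature $\mathbb{B}_{0,1}(\omega)$ lets us reconstruct some path $\widetilde{\mathbf{B}}:[0,1]\to G^{\lfloor p\rfloor}(\mathbb{R}^{d})$ with $\widetilde{\mathbf{B}}_{t}=\mathbf{B}_{\phi(t)}(\omega)$ for an unknown reparametrization $\phi$. From $\widetilde{\mathbf{B}}$ and the multiplicative structure $\widetilde{\mathbb{B}}_{s,t}=\widetilde{\mathbb{B}}_{0,s}^{-1}\otimes\widetilde{\mathbb{B}}_{0,t}$, the signature on every sub-interval is computable. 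Since $\widetilde{\mathbb{B}}_{s,t}=\mathbb{B}_{\phi(s),\phi(t)}(\omega)$, this gives
\[
\widetilde{L}_{s,t}^{\widetilde{\mathbf{B}}}(\omega)\triangleq\limsup_{n\to\infty}\left(\left(\tfrac{n}{2}\right)!\bigl\|\widetilde{\mathbb{B}}_{s,t}^{n}\bigr\|_{\mathrm{proj}}\right)^{\frac{2}{n}}=\widetilde{L}_{\phi(s),\phi(t)}(\omega)=\kappa_{d}\bigl(\phi(t)-\phi(s)\bigr).
\]
Setting $(s,t)=(0,1)$ recovers $\kappa_{d}=\widetilde{L}_{0,1}^{\widetilde{\mathbf{B}}}(\omega)$, which is positive by (iii); then $\phi(t)=\widetilde{L}_{0,t}^{\widetilde{\mathbf{B}}}(\omega)/\kappa_{d}$ is determined for every $t\in[0,1]$.

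Finally, for each $s\in[0,1]$, surjectivity of $\phi$ gives some $t\in\phi^{-1}(\{s\})$, and we set $\mathbf{B}_{s}(\omega)\triangleq\widetilde{\mathbf{B}}_{t}$; this is well-defined because $\phi(t_{1})=\phi(t_{2})$ forces $\widetilde{\mathbf{B}}_{t_{1}}=\mathbf{B}_{\phi(t_{1})}(\omega)=\widetilde{\mathbf{B}}_{t_{2}}$. The procedure unambiguously reconstructs the full parametrized Brownian rough path from its signature.

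The argument is essentially an exercise in chaining together already-established results; the only real subtlety is that the null sets from (i), (ii) and (iii) must be aggregated into a single null set before the above pointwise-in-$\omega$ argument can be carried out uniformly in all sub-intervals $(s,t)$. This is precisely why the universal (in $s,t$) version of the $\widetilde{L}$-identity given in Proposition \ref{prop: universality of null set} is indispensable here, rather than the weaker fixed-$(s,t)$ statement of Corollary \ref{cor: L =00003D00003D k t non-uniformly}.
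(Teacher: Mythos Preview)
Your argument is correct and follows essentially the same route as the paper: use tree-reducedness to conclude that the signature determines the Brownian rough path up to reparametrization, then use the universal identity $\widetilde{L}_{s,t}=\kappa_{d}(t-s)$ from Proposition~\ref{prop: universality of null set} together with $\kappa_{d}>0$ to read off the reparametrization explicitly via $\phi(t)=\widetilde{L}_{0,t}^{\widetilde{\mathbf{B}}}/\kappa_{d}$. One small point: the ``up to reparametrization'' statement you attribute to \cite{LQ13,BG15} is really the content of Lemma~4.6 in \cite{BGLY16} (which is what the paper invokes); and your allowing $\phi$ to be merely non-decreasing, with the observation that $\phi(t_{1})=\phi(t_{2})$ forces $\widetilde{\mathbf{B}}_{t_{1}}=\widetilde{\mathbf{B}}_{t_{2}}$, is a harmless generalization of the paper's use of strictly increasing reparametrizations.
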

\begin{proof}
We define an equivalence relation $\sim$ on rough paths so that $\mathbf{X}\sim\mathbf{Y}$
if and only if there is a continuous, strictly increasing, onto function
$\sigma$ from $[0,1]$ onto $[0,1]$ such that 
\[
\mathbf{X}_{t}=\text{\ensuremath{\mathbf{Y}}}_{\sigma(t)}.
\]
 (i.e. $\mathbf{X}$ and $\mathbf{Y}$ are reparametrization of each
other. This relation was considered in depth in \cite{BG15}, Section
5.3).

Lemma 4.6 in \cite{BGLY16} states that two tree-reduced geometric
rough path have the same signature if and only if they are in the
same equivalence class. Corollary \ref{cor: uniqueness of signature for Brownian rough path}
therefore implies that for any two $\omega_{1}$ and $\omega_{2}$
outside some $\mathbb{P}$-null set $\mathcal{N}$, $\mathbf{B}(\omega_{1})$
and $\mathbf{B}(\omega_{2})$ have the same signature if and only
if they are in the same equivalence class.

Pick an arbitrary representative $(\mathbf{X}_{t})_{0\leqslant t\leqslant1}\in[\mathbf{B}(\omega)].$
Then 
\[
\mathbf{X}_{t}=\mathbf{B}_{\sigma(t)}(\omega),\ \ \ 0\leqslant t\leqslant1,
\]
for some unique reparametrization $\sigma$ that we want to figure
out. According to Proposition \ref{prop: universality of null set},
we have 
\[
\sigma(t)=\frac{1}{\kappa_{d}}\limsup_{n\rightarrow\infty}\left(\left(\frac{n}{2}\right)!\|\mathbb{X}_{0,t}^{n}\|_{\mathrm{proj}}\right)^{\frac{2}{n}},
\]
where we again choose the projective tensor norms on the tensor products
associated with the $l^{2}$-norm on $\mathbb{R}^{d}.$ (Recall that
by Theorem \ref{thm: lower estimate of constant}, $\kappa_{d}\neq0$)
The underlying path $\mathbf{B}(\omega)$ is then given by 
\[
\mathbf{B}_{t}(\omega)=\mathbf{X}_{\sigma^{-1}(t)},\ \ \ 0\leqslant t\leqslant1.
\]
\end{proof}
Another way of understanding the previous result is the following.
Since $[\mathbf{B}(\omega)]$ can be recovered from its signature,
we know that the image of the signature path $\mathbb{B}(\omega)$
can be recovered from its endpoint. For every tensor element $\xi=(1,\xi_{1},\xi_{2},\cdots)$
which can be realized as the signature of some Brownian sample path,
we then have 
\[
\mathbf{B}_{\|\xi\|/\kappa_{d}}(\omega)=\pi^{(2)}(\xi),
\]
where 
\[
\|\xi\|\triangleq\limsup_{n\rightarrow\infty}\left(\left(\frac{n}{2}\right)!\|\xi_{n}\|_{\mathrm{proj}}\right)^{\frac{2}{n}}
\]
and $\pi^{(2)}:\ T((\mathbb{R}^{d}))\rightarrow T^{(2)}((\mathbb{R}^{d}))$
is the canonical projection map.

Beyond the study of signature, Corollary \ref{cor: uniqueness of signature for Brownian rough path}
also gives the following property of Brownian rough path, which we
are unable to find in the literature.
\begin{cor}
\label{cor: no two BRPs can be equal up to reparametrization}There
exists a $\mathbb{P}$-null set $\mathcal{N},$ such that for any
two distinct $\omega_{1},\omega_{2}\notin\mathcal{N},$ $\mathbf{B}(\omega_{1})$
and $\mathbf{B}(\omega_{2})$ cannot be equal up to a reparametrization. 
\end{cor}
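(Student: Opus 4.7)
The plan is to show that outside the universal null set supplied by Proposition \ref{prop: universality of null set}, the function $\widetilde{L}_{s,t}$ is so rigid that any reparametrization identifying two Brownian rough paths must be the identity. Concretely, I would take $\mathcal{N}$ to be the null set from Proposition \ref{prop: universality of null set} (under the projective norm associated with the $l^{2}$-norm on $\mathbb{R}^{d}$), so that for every $\omega\notin\mathcal{N}$ we have $\widetilde{L}_{s,t}(\omega)=\kappa_{d}(t-s)$ for all $0\leqslant s<t\leqslant 1$. Crucially, Theorem \ref{thm: lower estimate of constant} guarantees $\kappa_{d}\geqslant(d-1)/2>0$ since $d\geqslant 2$.

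Next, suppose for contradiction that $\omega_{1},\omega_{2}\notin\mathcal{N}$ are distinct and that there exists a continuous strictly increasing bijection $\sigma:[0,1]\to[0,1]$ with $\mathbf{B}_{t}(\omega_{1})=\mathbf{B}_{\sigma(t)}(\omega_{2})$ for all $t\in[0,1]$. Taking multiplicative increments in the tensor algebra yields $\mathbb{B}_{s,t}(\omega_{1})=\mathbb{B}_{\sigma(s),\sigma(t)}(\omega_{2})$ for every pair $s<t$. Now apply $\widetilde{L}$ to both sides of this identity: by Proposition \ref{prop: universality of null set} the left hand side equals $\kappa_{d}(t-s)$, while the right hand side equals $\kappa_{d}(\sigma(t)-\sigma(s))$. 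Since $\kappa_{d}>0$, this forces $\sigma(t)-\sigma(s)=t-s$ for all $s<t$, and combined with $\sigma(0)=0$ we conclude $\sigma=\mathrm{id}$. Hence $\mathbf{B}_{t}(\omega_{1})=\mathbf{B}_{t}(\omega_{2})$ for every $t$; projecting onto the first level gives $B_{\cdot}(\omega_{1})=B_{\cdot}(\omega_{2})$ as continuous paths, which on the canonical Wiener space (or after identifying sample points with the underlying paths via the image measure) forces $\omega_{1}=\omega_{2}$, contradicting distinctness.

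There is no serious analytic obstacle here: the whole argument is a two-line consequence of the rigidity statement in Proposition \ref{prop: universality of null set} together with the positivity of $\kappa_{d}$ from Theorem \ref{thm: lower estimate of constant}. The only delicate point worth checking is that the exceptional null set $\mathcal{N}$ can indeed be chosen uniformly in the time parameters $(s,t)$ and simultaneously for $\omega_{1}$ and $\omega_{2}$; without that uniformity, the reparametrization $\sigma$ could in principle carry $(s,t)$ into a pair $(\sigma(s),\sigma(t))$ lying in a sample-point-dependent exceptional set for $\omega_{2}$, and the comparison $\widetilde{L}_{s,t}(\omega_{1})=\widetilde{L}_{\sigma(s),\sigma(t)}(\omega_{2})$ would not be justified. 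This is precisely what Proposition \ref{prop: universality of null set} has already taken care of, so the proof reduces to the short computation above.
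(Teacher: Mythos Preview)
Your proposal is correct and follows essentially the same route as the paper: choose the universal null set from Proposition~\ref{prop: universality of null set}, use the strict positivity of $\kappa_{d}$ from Theorem~\ref{thm: lower estimate of constant}, and compare $\widetilde{L}$-values on both sides of the assumed reparametrization to force $\sigma=\mathrm{id}$. The paper works only with the endpoints $(0,t)$ rather than general $(s,t)$ and cites an external lemma (Lemma~1.4 in \cite{BGLYAnnex}) for the fact that the signature of $t\mapsto\mathbf{B}_{\sigma(t)}$ is $t\mapsto\mathbb{B}_{0,\sigma(t)}$, whereas you obtain the same identity by taking multiplicative increments and implicitly invoking the uniqueness part of Lyons' extension theorem; you also make explicit the final passage from $\mathbf{B}(\omega_{1})=\mathbf{B}(\omega_{2})$ to $\omega_{1}=\omega_{2}$ on canonical Wiener space, which the paper leaves tacit.
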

\begin{proof}
We follow the same notation as in the proof of Corollary \ref{cor: uniqueness of signature for Brownian rough path}.
Given two distinct $\omega_{1},\omega_{2}\notin\mathcal{N},$ suppose
that 
\[
\mathbf{B}_{t}(\omega_{2})=\mathbf{B}_{\sigma(t)}(\omega_{1}),\ \ \ 0\leqslant t\leqslant1,
\]
for some reparametrization $\sigma:\ [0,1]\rightarrow[0,1].$ Then
the signature of $t\rightarrow\mathbf{B}_{\sigma(t)}$ is equal to
$S(\mathbf{B})_{\sigma(t)}$ (see Lemma 1.4 in \cite{BGLYAnnex})
we have 
\[
\widetilde{L}_{0,\sigma(t)}(\omega_{1})=\kappa_{d}\sigma(t)
\]
and 
\[
\widetilde{L}_{0,t}(\omega_{2})=\kappa_{d}t.
\]
But from assumption we know that $\widetilde{L}_{0,\sigma(t)}(\omega_{1})=\widetilde{L}_{0,t}(\omega_{2}).$
Therefore, we must have $\sigma(t)=t$ and hence $\mathbf{B}(\omega_{1})=\mathbf{B}(\omega_{2})$. 
\end{proof}

\section{\label{sec:Further-remarks-and}Further remarks and related problems}

In Theorem \ref{thm: main result}, we considered the tail asymptotics
of the Brownian signature defined in terms of iterated Stratonovich's
integrals. Stratonovich's integrals arise naturally when we study
Brownian motion from the rough path point of view. On the other hand,
one could ask a similar question for Itô's iterated integrals. Indeed,
if we define 
\begin{equation}
\widehat{L}_{s,t}\triangleq\limsup_{n\rightarrow\infty}\left(\left(\frac{n}{2}\right)!\left\Vert \int_{s<u_{1}<\cdots<u_{n}<t}\mathrm{d}B_{u_{1}}\otimes\cdots\otimes\mathrm{d}B_{u_{n}}\right\Vert _{l^{1}}\right)^{\frac{2}{n}}\label{eq:lim sup in l1 norm}
\end{equation}
where the iterated integrals are defined in the sense of Itô and the
tensor products are equipped with the $l^{1}$-norm, then by a similar
type of arguments, one can show that for each $s<t$ 
\begin{equation}
\frac{d(t-s)}{2}\leqslant\widehat{L}_{s,t}\leqslant\frac{d^{2}(t-s)}{2}\label{eq: upper and lower estimates for Ito's signature}
\end{equation}
almost surely. Since the lifting of Brownian motion in \textit{Itô's}
sense is not a geometric rough path, uniqueness of signature result
does not apply and the intrinsic meaning of the quantity $\widehat{L}_{s,t}$
is unclear. The proof of (\ref{eq: upper and lower estimates for Ito's signature})
will not be included here since it is essentially parallel to the
Stratonovich case.

Our main result of Theorem \ref{thm: main result} gives rise to many
interesting and related problems in the probabilistic context.

(1) The first interesting and immediate question one could come up
with is the exact value of $\kappa_{d}$ and its probabilistic meaning.
In view of the length conjecture (\ref{eq: the length conjecture})
and Theorem \ref{thm: main result}, if we consider the projective
tensor norms on the tensor products induced by the Euclidean norm
on $\mathbb{R}^{d},$ it is quite natural to expect that, $\kappa_{d}$
would have a meaning related to certain kind of quadratic variation
for the Brownian rough path. It also seems that there are rooms for
improving the upper estimate for $\kappa_{d}.$ The point is that
in the proof of Lemma \ref{lem: estimating the sup-L^1 norm for the Brownian signature},
if we shuffle an arbitrary long word $\{i_{1},\cdots,i_{n}\}$ over
$\{1,\cdots,d\}$ with itself, the chance of hitting a nonzero coefficient
in the $2n$-degree component of the Brownian expected signature is
quite small. But to make the analysis precise, some hard combinatorics
argument for the shuffle product structure might be involved.

(2) If $\kappa_{d}$ is related to certain kind of quadratic variation
for the Brownian motion, it is reasonable to expect that our main
result and corollaries apply to diffusions or even general continuous
semimartingales, though there is no reason to believe that in this
case the corresponding $\widetilde{L}_{s,t}$ will still be deterministic.
For Gaussian processes, it is even not clear that any analogous version
of $\widetilde{L}_{s,t}$ would be meaningful since for instance we
know that 
\[
\lim_{n\rightarrow\infty}\sum_{i=1}^{n}|B_{\frac{i}{n}}-B_{\frac{i-1}{n}}|^{p}=0\ \ \mathrm{or}\ \ \infty
\]
in probability for a fractional Brownian motion with Hurst parameter
$H\in(0,1),$ according to whether $pH>1$ or $pH<1.$

(3) There is a quite subtle point in the discussion of Section 6.
With probability one, the lifting map $\omega\mapsto\mathbf{B}(\omega)$
is canonically well-defined. Therefore, although Corollary \ref{cor: strong uniqueness result for BRP}
(the uniqueness result) is stated at the level of the Brownian rough
path, by projection to degree one, it also holds at the level of sample
paths.

However, it is not at all clear if the first part of Corollary \ref{cor: no two BRPs can be equal up to reparametrization}
is true at the level of Brownian sample paths. More precisely, to
our best knowledge, a solution to the following classical question
for Brownian motion is not known (at least not to us yet): does there
exist a $\mathbb{P}$-null set $\mathcal{N},$ such that no two sample
paths outside $\mathcal{N}$ can be equal up to a non-trivial reparametrization?
This question is stated for Brownian sample paths and has nothing
to do with the lifting of Brownian motion to rough paths.

It is a subtle point that the result of Corollary \ref{cor: no two BRPs can be equal up to reparametrization}
does not yield an affirmative answer easily to the above question.
Indeed, if one wants to apply Corollary \ref{cor: no two BRPs can be equal up to reparametrization},
a missing point is whether the lifting operation and the reparametrization
operation are commutative outside some universal $\mathbb{P}$-null
set. In other words, it is not known if there exists a $\mathbb{P}$-null
set $\mathcal{N},$ such that one could define a lifting map $\omega\mapsto\mathbf{B}(\omega)$
for all $\omega\notin\mathcal{N},$ which satisfies 
\[
\mathbf{B}_{\cdot}(\omega_{\sigma})=\mathbf{B}_{\sigma(\cdot)}(\omega)
\]
for all reparametrizations $\sigma:\ [0,1]\rightarrow[0,1].$ When
defining the almost sure lifting of Brownian motion, the $\mathbb{P}$-null
set comes with the given choice of approximation. It is quite subtle
(and could be false) to see if the $\mathbb{P}$-null set can be chosen
in a universal way. 

(4) A final remark is about whether the limsup in (\ref{eq:lim sup in l1 norm})
can be replaced by sup. This is true for bounded variations, the proof
of which we now briefly explain. 

Let $g=(1,g_{1},g_{2},\cdots)$ be a group-like element. From the
shuffle product formula, 
\[
g_{k}^{\otimes n}=\sum_{\sigma\in\mathcal{S}(k,\cdots,k)}\mathcal{P}^{\sigma}(g_{nk}).
\]
Therefore, 
\[
\|g_{k}\|_{\mathrm{proj}}^{n}\leqslant\frac{(nk)!}{(k!)^{n}}\|g_{nk}\|_{\mathrm{proj}}.
\]
It follows that 
\[
\left(k!\|g_{k}\|_{\mathrm{proj}}\right)^{\frac{1}{k}}\leqslant\left((nk)!\|g_{nk}\|_{\mathrm{proj}}\right)^{\frac{1}{nk}},\ \ \ \forall n,k\geqslant1.
\]
Therefore, we conclude that 
\[
\sup_{n\geqslant1}\left(n!\|g_{n}\|_{\mathrm{proj}}\right)^{\frac{1}{n}}=\limsup_{n\rightarrow\infty}\left(n!\|g_{n}\|_{\mathrm{proj}}\right)^{\frac{1}{n}}.
\]
This is indeed true for any given admissible norms. A similar statement
with a fractional factorial normalization (which naturally corresponds
to the rough path situation) is \textit{not} true. Indeed, considering
the Brownian motion case, we have 
\[
\sup_{n\geqslant1}\left(\left(\frac{n}{2}\right)!\|\mathbb{B}_{0,1}^{n}\|_{\mathrm{proj}}\right)^{\frac{2}{n}}\geqslant\left(\left(\frac{1}{2}\right)!\|B_{1}-B_{0}\|_{\mathbb{R}^{d}}\right)^{2},
\]
while on the other hand, by Theorem \ref{thm: the limsup is a constant},
\[
\limsup_{n\rightarrow\infty}\left(\left(\frac{n}{2}\right)!\|\mathbb{B}_{0,1}^{n}\|_{\mathrm{proj}}\right)^{\frac{2}{n}}=\kappa_{d}
\]
for almost surely. Therefore, with positive probability the ``sup''
is not equal to the ``limsup'' for the Brownian signature.

\end{document}